\def\alp{\alpha}
\def\dd{\displaystyle}
\def\gam{\gamma}
\def\Gam{\Gamma}
\def\mtline#1{\hbox to#1{\hrulefill}}
\def\noi{\noindent}
\def\ome{\omega}
\def\what{\widehat}
\def\wtit{\widetilde}
\def\cA{{\mathcal A}}
\def\cH{{\mathcal H}}
\def\cK{{\mathcal K}}
\def\cR{{\mathcal R}}
\def\cS{{\mathcal S}}
\def\cU{{\mathcal U}}
\def\maA{{\mathcal A}}
\def\maH{{\mathcal H}}
\def\maK{{\mathcal K}}
\def\maR{{\mathcal R}}
\def\maS{{\mathcal S}}
\def\maU{{\mathcal U}}
\def\C{{\mathbb C}}
\def\D{{\mathbb D}}
\def\N{{\mathbb N}}
\def\R{{\mathbb R}}
\def\S{{\mathbb S}}
\def\Z{{\mathbb Z}}
\def\fm{\mathfrak m}
\definecolor{purple}{cmyk}{.33,1,0,.4}
\definecolor{m}{rgb}{1,0.1,1}
\definecolor{green}{cmyk}{1,0,1,0}
\definecolor{test}{rgb}{1,1,1}
\definecolor{cmyk}{cmyk}{0,1,1,0}
\newtheorem{Equation}{}[section]
\newtheorem{corollary}[Equation]{Corollary}
\newtheorem{definition}[Equation]{Definition}
\newtheorem{lemma}[Equation]{Lemma}
\newtheorem{proposition}[Equation]{Proposition}
\newtheorem{remark}[Equation]{Remark}
\newtheorem{theorem}[Equation]{Theorem}
\newtheorem{assu}[Equation]{Assumption}
\newtheorem{coro}[Equation]{Corollary}
\newtheorem{defi}[Equation]{Definition}
\newtheorem{lemm}[Equation]{Lemma}
\newtheorem{prop}[Equation]{Proposition}
\newtheorem{theo}[Equation]{Theorem}
\def\AS{\operatorname{AS}}
\def\ch{\operatorname{ch}}
\def\dim{\operatorname{dim}}
\def\Dim{\operatorname{Dim}}
\def\div{\operatorname{div}}
\def\Id{\operatorname{I}}
\def\Im{\operatorname{Im}}
\def\Ind{\operatorname{Ind}}
\def\Ker{\operatorname{Ker}}
\def\Supp{\operatorname{Supp}}
\def\tr{\operatorname{tr}}
\def\htr{\operatorname{\mathfrak{tr}}}
\def\vol{\operatorname{vol}}
\def\cA{{\mathcal A}}
\def\maA{{\mathcal A}}
\def\alp{\alpha}
\def\gam{\gamma}
\def\Gam{\Gamma}
\def\ep{\epsilon}
\def\Lam{\Lambda}
\def\ome{\omega}
\def\dd{\displaystyle}
\def\pa{\partial}
\def\lan{\langle}
\def\ran{\rangle}
\def\ssm{\smallsetminus}
\def\what{\widehat}
\def\wtit{\widetilde}
\begin{document}



\title[ Dirac operators on foliations with invariant transverse  measures\ \today]
{ Dirac operators on foliations \\ with invariant  transverse measures  \\  \today}


\author{Moulay Tahar Benameur}
\address{Institut Montpellierain Alexander Grothendieck, UMR 5149 du CNRS, Universit\'e de Montpellier}
\email{moulay.benameur@umontpellier.fr}

\author[James L. Heitsch \today]{James L.  Heitsch}
\address{Mathematics, Statistics, and Computer Science, University of Illinois at Chicago} 
\email{heitsch@uic.edu}

\thanks{\hspace{-0.5cm}
 MSC (2010):  53C12, 53C21, 58J20\\
Key words: foliation, Dirac operator, relative index, positive scalar curvature}

\begin{abstract} We extend the groundbreaking results of Gromov and Lawson, \cite{GL3}, to Dirac operators defined along the leaves of foliations of non-compact complete Riemannian manifolds which admit invariant transverse measures.  We prove a relative measured index theorem for pairs of such manifolds, foliations and operators, which are identified off compact subsets of the manifolds.  We assume that the spectral projections of the leafwise operators for some interval $[0,\ep]$, $\ep > 0$, have finite dimensional images when paired with the invariant transverse measures.  As a prime example, we show that if the zeroth order operators in the associated Bochner Identities are uniformly positive off compact subsets of the manifolds, then they satisfies the hypotheses of our relative measured index theorem.  Using these results, we show that for a large collection of spin foliations, the space of positive scalar curvature metrics on each foliation has infinitely many path connected components. 

\end{abstract} 

\maketitle

\tableofcontents

\section{Introduction}

In this paper we extend some of the groundbreaking results of Gromov and Lawson to Dirac operators defined along the leaves of a foliation $F$ of a non-compact complete Riemannian manifold $M$.   In particular, we extend their highly important relative index theorem, Theorem 4.18 of \cite{GL3}, to this situation, which has been an open problem since the 1980s. That theorem has played a fundamental role in the development and understanding of the existence and non-existence of metrics with positive scalar curvature, as well as the structure of the spaces of such metrics.  It is essential for the extension of results for compact manifolds to non-compact manifolds.  Our work is in the spirit of the transition from the Atiyah-Singer index theorem, \cite{ASIII}, to Connes' measured index theorem for foliations, \cite{ConnesIntegration}.  In order to overcome the problems of dealing with non-compact manifolds, we assume that our objects have bounded geometry.  This, and a good deal of hard analysis, allows us  to prove our first main result, the relative measured index theorem for foliations.   Our second main result is that if the zeroth order operator in the associated Bochner Identity is uniformly positive off a compact subset of $M$, then $(M,F)$ satisfies the hypothesis of our relative index theorem.  We then use these results to show that for a  large collection of spin foliations, the space of positive scalar curvature metrics on each foliation has infinitely many path connected components.

\medskip

The metric on $M$ induces one on the leaves of $F$, and we assume that $M$ and all the leaves of $F$ have bounded geometry.  We also assume that $(M,F)$ admits an invariant transverse Borel measure $\Lam$.  We denote by $d\mu$ the global measure on $M$ determined by $\Lam$ and the leafwise metric.  When the manifold is compact, our results reduce to Connes' index theorem for measured foliations \cite{ConnesIntegration}.

Any Clifford bundle $E$ over the Clifford algebra of the co-tangent bundle to $F$, along with a Hermitian connection $\nabla$ compatible with Clifford multiplication, determines a leafwise Dirac operator, denoted
$$
D_L^E:C^{\infty}_{c}(E) \to C^{\infty}_{c}(E).
$$ 
There is a canonical zeroth order operator $\maR^E_F$ defined on $C^{\infty}_{c}(E)$, so that the operators $D_L^E$, $\cR^E_F$, $\nabla$ and its leafwise formal adjoint $\nabla^*$, are related by the general Bochner Identity, \cite{LM},
$$
 (D_L^E)^2 \,\, = \,\, \nabla^* \nabla \,\, + \,\,  \cR^E_F.
$$ 
 
\medskip

Our first main result is the foliation relative measured index theorem.  In particular, we assume that we have two foliated manifolds $(M, F)$ and $(M', F')$ as above, with invariant  transverse measures $\Lam$ and $\Lam'$, and Clifford bundles $E$ and $E'$.   So there are leafwise Dirac operators $D_L^E$ and $D_L^{E'}$.  We further assume that there are compact subspaces $\cK = M\smallsetminus V$ and $\cK' = M'\smallsetminus V'$ so that the situations on $V$ and $V'$ are identical.   Using parametrices, we can then define a relative measured index for the pair $(D_L^E, D_L^{E'})$, denoted $\Ind_{\Lam,\Lam'}(D_L^E, D_L^{E'})$.   

The first half of the foliation relative measured index theorem is the following.

\medskip

\noi
{\bf Theorem \ref{Firstmain} }  {\em $\Ind_{\Lam, \Lam'}(D_L^E, D_L^{E'})$ is finite, and the following formula holds, \\
$$
\Ind_{\Lam, \Lam'}(D_L^E, D_L^{E'}) \,\, = \,\,
\int_{\cK} \AS (D_L ^E)_L  \, d\Lam \,\, - \,\,   \int_{\cK'} (\AS D_L ^{E'})_L  \, d\Lam',
$$
where  $\AS (D_L ^E)_L $ is the characteristic differential form on the leaves of $F$ associated to $D_L^E$ by the local Atiyah-Singer Index Theorem, and similarly for $\AS (D_L ^{E'})_L$.}

\medskip  

In general, it is not possible to express the relative measured index $\Ind_{\Lambda, \Lambda'} (D_L^E, D_L^{E'})$ in terms of the leafwise  projections $P_0$ and $P'_0$ onto the leafwise kernels of $(D_L^E)^2$ and $(D_L^{E'})^2$, as occurs in the classical cases.  This is because, in general on non-compact manifolds of bounded geometry,  the super-traces of the leafwise Schwartz kernels of $e^{-t (D_L^E)^2}$ and $P_0$ satisfy 
$$
\lim_{t \to \infty} \tr_s( k_{e^{-t (D_L^E)^2}}(x,x))  \,\, = \,\   \tr_s( k_{P_0}(x,x)),
$$
{\em only  pointwise}, and similarly for $e^{-t (D_L^{E'})^2}$ and $P_0'$.  We give conditions here where such an expression is possible.  

\medskip

Denote the leafwise spectral projection associated to $(D_L^E)^2 $ for the interval $[0,\ep]$ by $P_{[0,\ep]}$, and its  
leafwise Schwartz kernel by $k_{P_{[0,\ep]}}$.  The $\Lam$ dimension of the  image of $P_{[0,\ep]}$ is the element of $[0,\infty]$, given by
$$
\Dim_{\Lam}(\Im(P_{[0,\ep]}))   \,\,  = \,\,  \int_M  \tr(k_{P_{[0,\ep]}}(x, x))  \,  d\mu,
$$
and similarly for $(D_L^{E'})^2$ and  $\Dim_{\Lam}(\Im(P_{[0,\ep]}'))$.  

\medskip

The second half of the foliation relative measured index theorem is the following.   For this theorem we need Assumption \ref{Assume}, which is essentially  that there is an open submanifold $\wtit{M}$ of $M \ssm \cK$, with compact complement,  so that the projection $\wtit{P}$ to the kernel of $D_L^E$ restricted to $\wtit{M}$ satisfies $\dd \int_{\wtit{M}}  \tr(k_{\wtit{P}}(x, x))  \,  d\mu$ is finite.  

\medskip

\noi
{\bf Theorem \ref{mainSection4}} 
{\em Suppose that there is $\ep_0 > 0$ so that  $\Dim_{\Lam}(\Im(P_{[0,\ep_0]}))$ and $\Dim_{\Lam}(\Im(P'_{[0,\ep_0]}))$ are finite, and that Assumption \ref{Assume} holds. Then, for $0 < \ep \leq \ep_0$,
$$
\Ind_{\Lam, \Lam'}(D_L^E, D_L^{E'}) \,\, = \,\,
\int_M \tr_s(k_{P_{[0, \ep]}}(x,x)) \, d\mu     \,\, - \,\, \int_{M'}   \tr_s(k_{P_{[0, \ep]}'}(x,x)) \, d\mu'.
$$
Thus,
$$
\int_M \tr_s(k_{P_0}(x,x)) d\mu     \,\, - \,\, \int_{M'}   \tr_s(k_{P_0'}(x,x)) \, d\mu' \,\, = \,\,
\int_{\cK} \AS (D_L ^E)_L  \, d\Lam \,\, - \,\,   \int_{\cK'} \AS (D_L ^{E'})_L  \, d\Lam'.
$$
}

For other results which show that restrictions on the spectral measures near zero of elliptic operators on foliations are necessary and sufficient to give index theorems,   see \cite{HL99, BH08, BHW14}.  

\medskip

Our second main result, an  extension of Theorem 3.2 of [GL83],  gives a condition which guarantees that the hypotheses of the foliation relative measured index theorem are satisfied. 

\medskip

\noi
{\bf Theorem \ref{measPR} } {\em Suppose the curvature operator $\maR_F^E$ is uniformly positive near infinity, that is, there is a compact subset $\cK \subset M$  and $\kappa_0 = \sup \{ \kappa \in \R \, | \, \maR^E_F -\kappa \Id \geq 0 \, \text{on} \, M\smallsetminus \cK\}$ is positive.  Then for $0 \leq \ep < \kappa_0$, $\Dim_{\Lam}(\Im(P_{[0,\ep]}))$  is finite.   More precisely,
$$
0 \; \leq \; \Dim_{\Lam}(\Im(P_{[0,\ep]}))   \,\, \leq\,\, \,\, \frac{(\kappa_0 - \kappa_1)}{(\kappa_0 - \ep)}  \int_{\cK}  \tr(k_{[0,\ep]}(x, x))  \,  d\mu \,\, < \,\, \infty,   
$$
where $\kappa_1  = \sup \{ \kappa \in \R \, | \, \maR^E_F -\kappa \Id \geq 0\, \text{on} \, M\}$.  In addition, Assumption \ref{Assume} holds.}

\medskip

Prime examples are spin foliations admitting leafwise metrics with positive scalar curvature (PSC) near infinity.

When $\dd \int_M \tr_s(k_{P_0}(x,x)) d\mu$ is finite, the $\Lam$-index  of  $D_L^E$, denoted $\Ind_{\Lam}(D_L^E)$, is well defined,  and is given by 
$$
\Ind_{\Lam}(D_L^E)  \; = \; \int_M \tr_s(k_{P_0}(x,x)) d\mu.
$$

Corollaries of Theorems \ref{Firstmain},  \ref{mainSection4}, and  \ref{measPR}  are the following.

\medskip

\noi
{\bf Theorem \ref{IMCOR1}}
{\em   Suppose that  $\maR_F^E$ is uniformly positive on $M\smallsetminus \maK$, so also  $\maR_{F'}^{E'}$ is uniformly positive on $M'\smallsetminus \maK'$.  Then 
$$
\Ind_{\Lam, \Lam'}(D_L^E, D_L^{E'}) \; = \; \Ind_\Lambda (D_L^E) - \Ind_{\Lambda'} (D_L^{E'}) \;  =  \;
\int_{\cK} \AS (D_L ^E)_L  \, d\Lam  -   \int_{\cK'} \AS (D_L ^{E'})_L  \, d\Lam'.
$$
}

\noi
{\bf Theorem \ref{IMCOR2}}
{\em Suppose that $E$ and $E'$ are two Clifford bundles over $M$ which are isomorphic off some compact subspace $\cK$ of $M$, and that $\cR^E_F$, so also  $\maR_{F}^{E'}$,  is uniformly positive on $M\smallsetminus \maK$.
Then 
$$
\Ind_{\Lam, \Lam}(D_L^E, D_L^{E'})  \,\, = \,\,  \Ind_\Lambda (D_L^E) - \Ind_\Lambda (D_L^{E'})
 \,\, = \,\, 
 \int_M (\AS (D_L)(\ch(E) - \ch(E'))_L  \, d\mu.
$$
}

For the next corollary, we say that the foliation $F$, and so also $F'$, is reflective if $\pa V$ is transverse to $F$,     so also $\pa V'$ is transverse to $F'$.  Then we can ``cut and paste" as in \cite{GL3} to get the compact  manifold $\what{M} = \cK \cup \cK'$ with foliation $\what{F}$,  transverse measure $\what{\Lam}$ and leafwise operator $\what{D}$. 

\smallskip

\noi
{\bf Theorem \ref{ExRelInd2}}  {\em Suppose that $F$ is reflective and $\cR^E_F$  is strictly positive off $\cK$, so also $F'$ is reflective and $\maR_{F'}^{E'}$ is  strictly positive off $\cK'$.  Then  
$$
\Ind_{\Lam, \Lam'}(D_L^E, D_L^{E'})   \,\, = \,\,  \Ind_{\what{\Lam}}(\what{D})   \,\, = \,\,
\int_{\cK} \AS (D_L ^E)_L  \, d\Lam  -   \int_{\cK'} \AS (D_L ^{E'})_L  \, d\Lam'.
$$
}

The previous construction  extends to the following more general situation, see again \cite{GL3}.  Assume $M \ssm \cK = V_+ \cup V_{\Phi}$  and  $M' \ssm \cK'  = V_+' \cup V_{\Phi}'$, where the unions are disjoint, that there is compatibility on the subsets $V_{\Phi}$ and $V_{\Phi}'$, that $F$ is reflective on $V_{\Phi}$, so $F'$ is reflective on $V_{\Phi}'$, and that  $\cR_F^E$ and $ \cR_{F'}^{E'}$ are strictly positive off $\cK$ and $\cK'$.  
Then we may cut and paste to get  the manifold 
$\what{M} = (M \ssm V_{\Phi}) \cup (M' \ssm V'_{\Phi})$, with the foliation
$\what{F}$,  the invariant  transverse measure $\what{\Lam}$, and the leafwise operator  $\what{D}_L^{\what{E}}$.
Because of the positivity off $\cK$, $\cK'$ and $\what{\cK} = \cK \cup \cK'$, all three operators $D_L^E$, $D_L^{E'}$ and $\what{D}_L^{\what{E}}$
 have finite  invariant  transverse measure indices, and we  have our final corollary, the  $\Phi$ relative index theorem, which will be useful in Section \ref{modspaces}.

\medskip

\noi
{\bf Theorem \ref{RelIndThm2}}  {\em Under the conditions above,
$$
\Ind_{\what{\Lam}}(\what{D}_L^{\what{E}}) \, = \,  
\Ind_{\Lam}(D_L^E) \,\, - \,\, \Ind_{\Lam'}(D_L^{E'}).
$$}

We now give a brief outline of the paper.
In Section \ref{results}, we give the specific setup we consider.  The techniques used in  \cite{GL3} of the proof of the classical relative index theorem are not available to us in general.  In particular, they consider a single non-compact manifold and an operator which is strictly positive off a compact subset.  This allows them to prove that the kernel of the operator is finite dimensional and there is a gap in the spectrum at $0$.  We consider a foliation $F$ of a non-compact manifold $M$ and a leafwise operator which is strictly positive off a compact subset $\cK \subset M$.  The intersection of a leaf $L$ of $F$ with $\cK$ may be a non-compact subset of $L$, considered as a manifold in its own right.  This causes problems, as the kernel of the operator on $L$ can then be infinite dimensional and there can be no gap in the spectrum at $0$.  To overcome these problems, we make the additional (rather strong) assumptions of bounded geometry and the existence of an invariant transverse measure.  This allows us to use the results and arguments of \cite{HL90}, extended in Section \ref{LDOs} from foliations of compact manifolds to the case of bounded geometry manifolds and foliations.  
 
 Section \ref{RelInd} is the heart of the paper.  It contains the definition of the relative measured index as well as the proof of the  foliation relative measured index theorem.   The proof uses mainly the theory of parametrices, analysis of  Schwartz kernels of operators, as well as the Spectral Mapping Theorem. 
 
 Section \ref{TIMAFTS}  contains the proof of Theorem \ref{measPR} and its corollaries.  The proof of Theorem \ref{measPR}  involves applying the leafwise Bochner identity to $k_{P_{[0,\ep]}}$. 

In Section \ref{modspaces} we define an invariant for pairs of PSC metrics on spin foliations as in \cite{GL3}, and show that if it is non-zero, then the metrics are not in the same path connected component of the space of PSC metrics on $F$.   We calculate this invariant for a large collection of spin foliations, and show that the space of  PSC metrics on each of these foliations has infinitely many path connected components. 

In this paper, we work leafwise on $M$ rather than on, say, the holonomy groupoid of $F$, since that would require us to assume the graph of $F$  is Hausdorff. This introduces some extra technicalities that we have to deal with. When the monodromy groupoid of  $F$ is Hausdorff while the holonomy groupoid is not, one can lift all the data to the monodromy covers and state the similar expected results there.  However the equivalence between our results here and the ones on the monodromy covers is not insured in general. Indeed, even with a single leaf whose fundamental group is not torsion free, some defect invariants can show up, see \cite{Be20}. Moreover, as in the classical  index theory for closed foliated manifolds, one may associate with the relative index data of  the present paper a higher index class, now living in the $K$-theory of a relative $C^*$-algebra and which does not need the existence of the holonomy invariant measures. So  our results here compute the image of this index class under a group morphism associated with the compatible pair $(\Lam, \Lam')$.

Finally, note that the results of this paper can be extended to the category of ``bounded geometry foliated spaces'' by adapting the constructions of \cite{MS06}. 
\medskip

\noindent
{\em Acknowledgements.}  It is a pleasure to thank Stephan Stolz for showing how to construct the manifolds we use in the examples in Section \ref{modspaces}.  

MTB  wishes to thank the french National Research Agency for support via the project ANR-14-CE25-0012-01 (SINGSTAR).

JLH wishes to thank the Simons Foundation for a Mathematics and Physical Sciences-Collaboration Grant for Mathematicians, Award Number 632868.

Both authors would like to thank the referee for cogent remarks which improved our paper.

\section{Preliminaries}\label{results}
Denote by  $M$ a  non-compact complete Riemannian manifold of dimension $n$, and by $F$  an oriented foliation (with the induced metric) of $M$ of dimension $p$, (until further notice, we assume that $p$ is even), and codimension $q = n - p$.    The metric on the leaves of $F$ induces a leafwise volume form denoted $dx_F$.  

The tangent and cotangent bundles of $M$ and $F$ are denoted $TM, T^*M, TF$ and $T^*F$.  A leaf of $F$ is denoted by $L$.  If $E$ is a bundle over $M$, the smooth sections are denoted by $C^{\infty}(E)$ and those with compact  support $C_c^{\infty}(E)$.  The smooth functions on $M$ are denoted by  $C^{\infty}(M)$ and those with compact  support by $C_c^{\infty}(M)$.  If $E$ carries a metric $h_x:E_x\otimes E_x \to \R$, the inner product map is abusively denoted
$$
\lan \cdot \ran: C^{\infty}(E \otimes  E) \to C^{\infty}(M),\text{ so it is given by } \langle \varphi\rangle (x):= h_x (\varphi (x)).
$$
In particular,  if $\varphi_1, \varphi_2 \in C^{\infty}(E)$, then 
$$
\lan \varphi_1\otimes \varphi_2  \ran(x) \,\, = \,\, h_x (\varphi_1 (x)\otimes \varphi_2(x)),\text{ also denoted }  \lan \varphi_1(x) , \varphi_2 (x)  \ran.
$$

We assume that both $M$ and $F$ are of bounded geometry, that is, the injectivity radius on $M$ and on all the leaves of $F$ is bounded below, and the curvatures and all of their covariant derivatives  on $M$ and on all the leaves of $F$ are uniformly bounded (the bound may depend on the order of the derivative).  Simple examples of one dimensional foliations on $\R^2$ show that bounded geometry on $M$ does not imply bounded geometry on the leaves of $F$.  We further assume that any connection or any metric on $E$ is uniformly bounded.   See \cite{Shubin} for material about bounded geometry bundles and their properties.

Let  ${\cU}$ be a good cover of $M$ by foliation charts as defined in \cite{HL90}.  In particular, denote by $\D^p (r)=\{x \in \R^p,  ||x||  <r\}$, and similarly for $\D^q(r)$.   An open locally finite cover  $\{(U_i, \psi_i)\}$ of $M$  by  foliation coordinate charts $\psi_i: U_i \to  \D^p(1) \times \D^q(1) \subset \R^n$  is a good cover for $F$ provided that
\begin{enumerate}
 \item 
For each $y \in \D^q(1), P_y = \psi_i^{-1}(\D^p(1) \times  \{y\})$ is contained in a leaf of $F$. $P_y$ is called a plaque of $F$.
\smallskip
\item 
If $\overline{U}_i \cap  \overline{U}_j \neq \emptyset$,  then $U_i \cap  U_j \neq \emptyset$, and $U_i \cap  U_j$ is connected.
\smallskip
\item 
Each $\psi_i$ extends to a diffeomorphism $\psi_i: V_i \to \D^p(2) \times \D^q(2)$, so that the cover $\{(V_i, \psi_i)\}$  satisfies $(1)$ and $(2)$, with $\D^p(1)$ and $\D^q(1)$ replaced by $\D^p(2)$ and $\D^q(2)$.
\smallskip
\item 
Each plaque of $V_i$  intersects at most one plaque of $V_j$  and a plaque of $U_i$ intersects a plaque of $U_j$ if and only if the corresponding plaques of $V_i$ and $V_j$ intersect.
\smallskip
\item
There are global positive upper and lower bounds on the norms of each of the derivatives of the $\psi_i$.
\end{enumerate}
Bounded geometry foliated manifolds always admit good covers. 

\medskip

When we mention measurable in this paper, that means borelian, i.e.\ measurable with respect to the Borel $\sigma$-algebra generated by the open subspaces for the underlying topology.  

\medskip

For each $U_i \in {\cU}$, let $T_i\subset U_i$ be a transversal (e.g. $T_i =  \psi_i^{-1}(\{0\} \times  \D^q(1))$) and set $T=\bigcup\,T_i$.  We may assume that the closures of the $T_i$ are disjoint.  Let $(U_i,T_i)$ and $(U_j,T_j)$ be elements of $\,\maU$, and $\gamma_{ij\ell}:[0,1] \to M$ be a path whose image is contained in a leaf with   $\gamma_{ij\ell}(0)\in T_i$ and $\gamma_{ij\ell}(1) \in T_j$. Then $\gamma_{ij\ell}$  induces a local homeomorphism $h_{\gamma_{ij\ell}}:  T_i \to T_j$, with domain $D_{\gamma_{ij\ell}}$ and range $R_{\gamma_{ij\ell}}$.   The  space $\cA^0_c(T)$ consists of all  uniformly bounded  measurable functions on $T $ which have compact support in each $T_i$.  The measurable Haefliger functions for $F$, denoted  $\cA^0_c(M/F)$, consists of elements in the quotient of $\cA^0_c(T)$ by the  vector subspace $W$ generated by elements of the form $\alpha_{ij\ell}-h_{\gamma_{ij\ell}}^*\alpha_{ij\ell}$ where $\alpha_{ij\ell} \in \cA^0_c(T)$ has support contained in $R_{\gamma_{ij\ell}}$.  We need to take care as to what this  means.  Members of $W$ consist of possibly infinite sums of elements of the form $\alpha_{ij\ell}-h_{\gamma_{ij\ell}}^*\alpha_{ij\ell}$, with the following restrictions: each member of $W$ has a bound on the leafwise length of all the $\gamma_{ij\ell}$ for that member, and each $\gamma_{ij\ell}$ occurs at most once.   Note that these conditions plus bounded geometry imply that for each member of $W$,  there is $n \in \N$ so that the number of elements of that member having $D_{\gamma_{ij\ell}}$ contained in any $T_i$ is less than $n$,  and that each $U_i$ and each $U_j$  appears at most a uniformly bounded number of times.  The projection map is denoted 
$$
[ \cdot ]: \cA^0_c(T) \,\,  \to \,\,  \cA^0_c(M/F).
$$

\medskip

Denote by  $\cA^{p}_b(M)$  the space of leafwise $p$-forms on $M$ which are leafwise smooth, transversely measurable and uniformly bounded.  As the bundle $TF$ is oriented, there is a continuous open surjective linear map, called integration over $F$,
$$
\dd \int_F:\cA^{p}_b(M) \to \cA^0_c(T).
$$
This map is given by choosing a partition of unity $\{\phi_i\}$ subordinate to the cover $\cU$, and setting 
$$
\int_F \omega \,\, = \,\, \sum_i \int_{U_i} \phi_i \omega.
$$
It is a standard result,  \cite{Ha80},  that the image of this differential form,  $\dd \Big[ \int_F \omega \Big]   \in \cA^0_c(M/F)$ is independent of the partition of unity.  
 
Note that $\dd \int_{U_i}$ is integration over the fibers of the projection $U_i \to T_i$, and
 that each integration $\omega \to \dd \int_{U_i} \phi_i \omega$ is essentially integration over a compact fibration, so $\dd \int_F$ satisfies the Dominated Convergence Theorem on each $U_i \in \cU$. 

\medskip

A graph chart $U_i \times_{\gamma_{ij\ell}} U_j \subset M \times M$,  is  a subset of the form
$$
U_i \times_{\gamma_{ij\ell}} U_j  \,\, = \,\,\bigcup_{z \in D_{\gamma_{ij\ell}}}  P_z \times P_{\gamma_{ij\ell}(z)}.
$$
It has a natural structure as a 2p+q dimensional manifold. 

For a real or complex bundle $E \to M$,  the external tensor product bundle  
$E \boxtimes E^*  \to M\times M$  restricts to a smooth bundle over $U_i \times_{\gamma_{ij\ell}} U_j$.  We denote the leafwise smooth, transversely measurable, bounded sections $k(x,y)$ with compact support of this bundle by $\Gam_c(U_i \times_{\gamma_{ij\ell}} U_j, E)$.   We extend them to all of $M \times M$ by  setting $k(x, y) = 0$ if $(x, y) \notin U_i \times_{\gamma_{ij\ell}} U_j$.   
\begin{defi}The  space  $\Gam_s(F,E)$ consists of sections $k$ of  $E \boxtimes E^*$, called kernels, such that $k$ is a (possibly infinite) sum $k = \sum_{ij\ell} k_{ij\ell}$,  with each $k_{ij\ell} \in \Gam_c (U_i \times_{\gamma_{ij\ell}}U_j, E)$.  For each $k$, we require that there is a bound on the leafwise length on its $\gamma_{ij\ell}$, and that each index $ij\ell$ occurs at most once.   Thus each $U_i$ and each $U_j$  appears at most a  bounded number of times, so the sum converges locally uniformly and in particular pointwise.   We further require that for each $k$, each of its leafwise derivatives in the local coordinates given by the good cover is uniformly bounded, with the bound possibly depending on the particular derivative. 
\end{defi}  

Denote by $E_L$ the restriction of $E$ to the leaf $L$.  
\begin{remark}
Recall the algebra $U\Psi^{-\infty} (L, E |_L)$ defined in \cite{Shubin}, Section A1.3, Definition 3.1.  Note  that elements of $\Gam_s(F,E)$ are  measurable families of elements of $U\Psi^{-\infty} (L, E |_L)$ with the bounds being uniform over $M$.
\end{remark}

If $k \in \Gam_s(F,E )$, it defines a leafwise operator 
$$
k: L^2(E_L) \to L^2(E_L)    \quad \text{ by } \quad
k(s)(x) \,\, = \,\, \int_L (k\,|_{L \times L})(x,y)s(y) \, dy_F.
$$
Because of the bounded geometry and the restriction on the lengths of the $\gamma_{ij\ell}$, the  operator corresponding to  $k \in  \Gam_s(F,E )$  has finite propagation, is leafwise smoothing, uniformly bounded, and transversely measurable.  See Theorems 2.3.1 and 2.3.2 of \cite{HL90}. 

Recall the notion of a super, that is $\Z_2$ graded, operator $A$.  Then the space $\cH$  which $A$ acts on splits as   $\cH = \cH^+ \oplus \cH^-$.  $A$ is an even operator if $A:\cH^{\pm} \to \cH^{\pm}$, and  an odd operator if  $A:\cH^{\pm}  \to \cH^{\mp}$.  If $A$ is an even (super) operator, its super trace is denoted 
$$
\tr_s(A) \,\, = \,\, \tr(A \, |_{\cH^+})  -   \tr(A \, |_{\cH^-}).
$$
If  $k \in \Gamma_s (F, E)$ and $x,y \in L$, then $k(x,y)$ is a linear operator from $E_y \to E_x$, the fibers over $y$ and $x$.  If it is an even operator, we set
$$
\tr_s (k(x, x))  \; = \; \tr \left(k(x, x)\vert_{E^+_x}\right) - \tr \left(k(x, x)\vert_{E^-_x}\right).
$$

\begin{definition}  The trace and  Haefliger trace of $k \in \Gamma_s(F,E)$  are given by
$$
\tr(k) \,\, = \,\,   \int_F  \tr(k(x,x))  \, dx_F \,\, \in \,\, \cA^0_c(T) \,\, \text{and} \,\, \htr(k) \,\, = \,\,   \Big[  \int_F  \tr(k(x,x))  \, dx_F \Big] \,\, \in \,\, \cA^0_c(M/F).
$$
If $k$ is even, its super-trace  and Haefliger super-trace are given by
$$
\tr_s(k) \,\, = \,\,   \int_F  \tr_s(k(x,x))  \, dx_F \,\, \in \,\, \cA^0_c(T) \,\, \text{and} \,\,\htr_s(k) \,\, = \,\, \Big[  \int_F  \tr_s (k(x, x)) \, dx_F \Big]    \,\, \in \,\, \cA^0_c(M/F).
$$
\end{definition} 

We end this section by recalling the following.

\begin{theorem}{\cite{HL90},Theorem 2.3.6.}\label{trprop}  Suppose $k_1, k_2 \in \Gamma_s(F,E )$  are super operators.   If both are even, 
$$\htr_s(k_1 \circ k_2) \,\, =  \,\, \htr_s(k_2 \circ k_1),$$
and if both are odd,
$$\htr_s(k_1 \circ k_2)  \,\, =  \,\, -\htr_s(k_2 \circ k_1),$$
in $\cA_c^0(M/F)$. 
\end{theorem}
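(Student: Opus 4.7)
The plan is to reduce the statement to a pointwise linear-algebra identity applied fiberwise to the Schwartz kernels, and then to account for the partition of unity used to define $\int_F$. The underlying fact is the super-trace identity for linear maps $A:V\to W$ and $B:W\to V$ between finite-dimensional $\Z_2$-graded vector spaces,
$$
\tr_s(AB) \; = \; (-1)^{|A||B|} \tr_s(BA).
$$
Applied to $A = k_1(x,y)$ and $B = k_2(y,x)$ at each pair $(x,y)$ in a common leaf $L_x$, together with the composition formula $(k_1\circ k_2)(x,x) = \int_{L_x} k_1(x,y) k_2(y,x) \, dy_F$, this yields the pointwise integrand identity
$$
\tr_s((k_1 k_2)(x,x)) \; = \; (-1)^{|k_1||k_2|} \int_{L_x} \tr_s\bigl(k_2(y,x) k_1(x,y)\bigr) \, dy_F.
$$

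First I would reduce to single graph charts: write $k_1 = \sum_\alpha k_{1,\alpha}$ and $k_2 = \sum_\beta k_{2,\beta}$ with $k_{1,\alpha}$ compactly supported in $U_{i_\alpha}\times_{\gamma_\alpha} U_{j_\alpha}$ and similarly for $k_{2,\beta}$. Bounded geometry, together with the uniform bound on the leafwise lengths of the paths $\gamma_{ij\ell}$ appearing in each $k_i$, implies that only boundedly many graph-chart pairs contribute to any fixed plaque; hence $k_1\circ k_2$ and $k_2\circ k_1$ lie in $\Gamma_s(F,E)$, the double sum converges locally uniformly, and by bilinearity of trace and composition it suffices to establish the identity for a single pair of chart components.

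For such a single pair, Fubini applied to the compactly supported integrand $\phi_i(x)\,\tr_s(k_1(x,y)k_2(y,x))$ gives
$$
\int_F \tr_s(k_1 k_2) \; = \; \sum_i \iint \phi_i(x) \, \tr_s(k_1(x,y) k_2(y,x)) \, dy_F\, dx_F,
$$
with the analogous formula carrying $\phi_i(y)$ for $\int_F \tr_s(k_2 k_1)$. Substituting the pointwise sign identity above, the difference
$$
\int_F \tr_s(k_1 k_2) \; - \; (-1)^{|k_1||k_2|}\int_F \tr_s(k_2 k_1)
$$
becomes
$$
(-1)^{|k_1||k_2|} \sum_i \iint \bigl(\phi_i(x) - \phi_i(y)\bigr)\, \tr_s(k_2(y,x) k_1(x,y)) \, dy_F\, dx_F,
$$
whose integrand is supported on pairs $(x,y)$ in a single graph chart, so $x$ and $y$ are joined by a fixed leafwise path coming from the two graph-chart paths of $k_1$ and $k_2$.

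The final step is to recognize this difference as an element of the subspace $W$ defining $\cA^0_c(M/F)$. Performing the $dy_F$-integration first and then the transversal integration in one term, versus the $dx_F$-integration first in the other, produces two functions on transversals which are related by the holonomy homeomorphism $h_\gamma$ associated to the loop $\gamma$ built from the paths of $k_1$ and $k_2$; hence each term has exactly the form $\alpha_{ij\ell} - h_{\gamma_{ij\ell}}^*\alpha_{ij\ell}$ and lies in $W$. The main obstacle I expect is the bookkeeping across the infinite sums: one must verify that the resulting collection of coboundary terms satisfies the uniform bound on leafwise path length and the "each $\gamma_{ij\ell}$ occurs at most once" conditions built into the definition of $W$. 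The uniform length bound on paths in $k_1$ and $k_2$, combined with bounded geometry and local finiteness of the good cover, provides precisely this control; reindexing composite paths and absorbing duplicates into a single $\alpha_{ij\ell}$ finishes the identification in $\cA_c^0(M/F)$.
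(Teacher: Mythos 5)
Your strategy --- reduce to a single pair of graph charts, apply fiberwise cyclicity of the super-trace, and identify the difference of plaque-integrals with a generator of $W$ via the holonomy relating the two transversals --- is the same as the paper's. The one substantive divergence is the treatment of the partition of unity: the paper notes that $\htr_s$ is independent of $\{\phi_i\}$ and therefore chooses it adapted to the chart pair, so that $(\phi_i\times\phi_j)k_1 = k_1$ and $(\phi_r\times\phi_s)k_2 = k_2$; then $\phi$ disappears from the computation and the claim becomes the pointwise identity $\tr_s(k_1\circ k_2)\vert_{T_i} = h^*_{\gamma_{ij\ell}}\big(\tr_s(k_2\circ k_1)\vert_{T_j}\big)$, read off directly from the Fubini rearrangement. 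Your version keeps $\phi_i$ general and rewrites the difference as $\sum_i\iint\big(\phi_i(x)-\phi_i(y)\big)\tr_s(\cdots)\,dy_F\,dx_F$. That is headed to the same place, but be careful: this is not literally a single double integral, because after fiber integration the $\phi_i(x)$ piece is a function on the transversal through $U_i$ indexed by the plaque of $x$, while the $\phi_i(y)$ piece is indexed by the plaque of $y$ over a \emph{different} transversal; recognizing the holonomy correspondence between these two is exactly the content of the statement, and the paper's normalization just makes that visible at a glance. Two small corrections to your final step: the path entering the $W$-generator $\alpha - h^*_\gamma\alpha$ should be $\gamma_{ij\ell}$, the single path carried by $k_1$ from $T_i$ to $T_j$ --- it is this holonomy that links the two transversals on which your two plaque-integrals live --- not the composite of the paths of $k_1$ and $k_2$, which closes up to a loop at $T_i$ and does not pair the two terms; and forming a Haefliger function involves no integration over the transversal, only the passage to the quotient $\cA_c^0(T)/W$.
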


Note that while the functions $\tr_s(k_1 \circ k_2)$ and $\tr_s(k_2 \circ k_1)$ also exist in $\cA_c^0(T)$, in general they are not equal. 

\begin{proof}
We do only the even case.  Because of the limit on the leafwise length of the $\gamma_{ij\ell}$, for each $i,j$, there are only finitely many $k_{1,ij\ell}$ in the sum making up  $k_1$.  Similarly,  for each $r,s$,  there are only finitely many $k_{2,rst}$ in the sum making up  $k_2$.   As
$$
\htr_s(\sum_{ij\ell} k_{1,ij\ell} \circ \sum_{rst}k_{2,rst}) =  \sum_{ij\ell}\sum_{rst} \htr_s(k_{1,ij\ell} \circ k_{2,rst}),
$$
we may assume $k_1 = k_{1,ij\ell} \in \Gam_c(U_i \times_{\gamma_{ij\ell}} U_j,E)$ and  $k_2 = k_{2,rst} \in \Gam_c(U_r \times_{\gamma_{rst}} U_s,E)$ with $U_j\cap U_r \neq \emptyset$ and $U_s\cap U_i \neq \emptyset$, since otherwise $\htr_s(k_{1,ij\ell} \circ k_{2,rst}) = 0$.
 
Since $\htr_s$ does not depend on the partition of unity, we may assume $(\phi_i \times \phi_j)k_1 = k_1$, and 
$(\phi_r \times \phi_s)k_2 = k_2$, so the partition of unity will play no role here. 
  
Suppose $z \in T_i$, with $x \in P_z$ and $y \in P_{\gamma_{ij\ell}(z)}$,  and $x \in U_i \cap U_s$ and $y \in U_j \cap U_r$.   
Note that, in order to get something non-trivial,  we must have $\gamma_{rst}\gamma_{ij\ell}(z) = z$.  Then
$$
\tr_s(k_1 \circ k_2) (z) \,\, = \,\,  \int_{P_z} \int_{L_z} \tr_s(k_1(z,x,y) k_2(\gam_{ij\ell}(z),y,x)) \, dy_F dx_F\,\, = \,\,  
$$
$$
 \int_{P_z} \int_{P_{\gamma_{ij\ell}(z)}}  \hspace{-0.6cm}  \tr_s(k_1(z,x,y) k_2(\gam_{ij\ell}(z),y,x))  \,  dy_F dx_F,
$$
since $k_1(x, y) = 0$ unless $x \in P_z$ and $y \in P_{\gamma_{ij\ell}(z)}$. 

Similarly,
$$
\tr_s(k_2 \circ k_1) (\gamma_{ij\ell}(z)) \,\, = \,\,  \int_{P_{\gamma_{ij\ell}(z)}}\int_{\gamma_{rst}\gamma_{ij\ell}(z)}   \hspace{-1.0cm}  \tr_s( k_2(\gam_{ij\ell}(z),y,x)k_1(\gam_{rst}\gam_{ij\ell}(z),x,y)) dx_F dy_F,
$$
which has exactly the same value as $\tr_s(k_1 \circ k_2) (z)$, but at $\gamma_{ij\ell}(z) \in T_j$, since $\gamma_{rst}\gamma_{ij\ell}(z) = z$.   Thus $\tr_s(k_1 \circ k_2) \,|_{T_i} = h^*_{ \gamma_{ij\ell}}(\tr_s(k_1 \circ k_2) \,|_{T_j})$, so their images in $\cA^0_c(M/F)$ are the same.
\end{proof}

\section{Overview of leafwise Dirac operators} \label{LDOs}

In this section we give extensions of some results from \cite{HL90}, see also \cite{H02}, to our more general setting.
The proofs for the case considered here are essentially the same as in \cite{HL90}.  The main things to notice are these.
\begin{enumerate}
\item  The space denoted $C^{\infty}_0(F,E)$ is replaced by the space  $\Gam_s(F,E)$. 
\item The bounds coming from the compactness of $M$ still hold due to our assumption of bounded geometry.
\item  The geometric endomorphism is just the identity map, the invariant  transverse measure is ignored, and $\dd \int_M$ is replaced by $\dd \int_F$.
\item All the operators considered here are transversely measurable.  
\end{enumerate}

\medskip

A  leafwise Dirac operator $D_L^E$ consists of a Dirac bundle $E$, that is a Clifford bundle over the Clifford algebra of $T^{*}F$, and a Hermitian connection $\nabla$ on $E$, compatible with Clifford multiplication, so that the 
operator 
$$
D_L^E:C^{\infty}_{c}(E) \to C^{\infty}_{c}(E)
$$
is given by the composition
$$
C^{\infty}_{c}(E)   \stackrel{\nabla}{\to}  C^{\infty}_{c}(T^{*}M \otimes E)\stackrel{\rho}{\to}  C^{\infty}_{c}(T^{*}F \otimes E)
\stackrel{m}{\to}    C^{\infty}_{c}(E),
$$
where $\rho$ is the restriction and  $m$ is Clifford multiplication.   For more details, see \cite{LM}.   In particular, if we identify $T^{*}F$ and $TF$ using the metric, then locally 
$$
D_L^E(s) \,\, = \,\, \sum_{j = 1}^p e_j \cdot \nabla_{e_j} s,
$$
where $e_1,...,e_p$ is a local orthonormal basis of $TF$, and $e_j \cdot$ is Clifford multiplication by $e_j$.
All the classical complexes (de Rham, Signature, Dolbeault, and Spin) give rise to leafwise Dirac operators  provided $F$ supports the necessary geometric structures for these complexes to be defined.   

Since the leaves $L$ are complete, $D_L^E$ is essentially self adjoint, \cite{Ch73}.  Thus any bounded Borel function $g$ on $\R$ applied to $D_L^E$ yields a well defined bounded leafwise operator $g(D_L^E): L^{2}(E_L) \to L^{2}(E_L)$.   
The operator we are interested in is $e^{-t(D_L^E)^2}$.   
Unfortunately, its Schwartz kernel $k_{e^{-t(D_L^E)^2}}$ is generally not in $\Gam_s(F,E )$.  However, we do have,
\begin{theorem}\label{SCF&D1}{\cite{HL90},Theorem 2.3.7.} Suppose that $g$ is a Schwartz function whose Fourier transform is in $C^{\infty}_c(\R)$, and that $B$ is a differential operator on $E$ along $F$ with smooth bounded coefficients.  Then  the Schwartz kernels of $g(D_L^E)$, $Bg(D_L^E)$, and $g(D_L^E)B$ are in $\Gam_s(F,E )$. 
\end{theorem}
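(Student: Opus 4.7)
The plan is to follow the standard finite propagation speed strategy adapted from the compact foliation case of \cite{HL90}, with the bounded geometry hypothesis doing the work that compactness did there. Since $\hat g \in C^\infty_c(\R)$, say $\operatorname{supp}\hat g \subset [-R,R]$, the spectral theorem and Fourier inversion give
$$
g(D_L^E) \; = \; \frac{1}{\sqrt{2\pi}} \int_{-R}^{R} \hat g(\xi) \, e^{i\xi D_L^E} \, d\xi
$$
as bounded operators on $L^2(E_L)$ for each leaf $L$. Since $D_L^E$ is essentially self-adjoint on the complete leaf $L$ and has principal symbol of unit norm, the wave operator $e^{i\xi D_L^E}$ has finite propagation speed $\le |\xi|$ by the Chernoff argument, so for each $\xi \in [-R,R]$ the Schwartz kernel of $e^{i\xi D_L^E}$ is supported within leafwise distance $R$ of the diagonal. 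Integrating against $\hat g$ shows the Schwartz kernel $k_{g(D_L^E)}$ is supported in $\{(x,y) : d_F(x,y) \le R\}$.

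The first main step is then to package this support property into the $\Gamma_s(F,E)$ decomposition. Given the good cover $\maU$ with its uniformly bounded plaque diameters, only graph charts $U_i\times_{\gamma_{ij\ell}}U_j$ whose connecting paths $\gamma_{ij\ell}$ have leafwise length bounded by a constant depending on $R$ and the plaque diameter can contain a point of $\operatorname{supp}(k_{g(D_L^E)})$. Bounded geometry together with local finiteness of $\maU$ then forces each $U_i$ and each $U_j$ to appear only a uniformly bounded number of times among such admissible graph charts, and the indices $ij\ell$ may be taken pairwise distinct by a standard enumeration. Choosing a bounded-geometry partition of unity $\{\phi_i\}$ subordinate to $\maU$, I would write
$$
k_{g(D_L^E)} \; = \; \sum_{ij\ell} \phi_i(x)\, k_{g(D_L^E)}(x,y)\, \phi_j(y),
$$
and identify each summand with an element $k_{ij\ell} \in \Gamma_c(U_i\times_{\gamma_{ij\ell}}U_j,E)$, extended by $0$.

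The second main step is the uniform bound on leafwise derivatives in the good-cover charts. Since the wave kernel $k_{e^{i\xi D_L^E}}$ is not itself smoothing, I would obtain smoothness of $k_{g(D_L^E)}$ from the fact that $g$ is Schwartz, so for each $N$ the operator $(D_L^E)^N g(D_L^E) = ((D_L^E)^N g)(D_L^E)$ is again given by Fourier transform of a $C^\infty_c$ function and is $L^2$-bounded, uniformly in $L$. Leafwise elliptic regularity applied to $(D_L^E)^N$ in the good chart $V_i$ (where all metric and connection coefficients and their derivatives are uniformly bounded) plus Sobolev embedding then yields pointwise $C^m$ bounds on $k_{g(D_L^E)}(x,y)$ for each $m$, with constants independent of the leaf and the chart. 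This is exactly the content needed for $k_{g(D_L^E)} \in \Gamma_s(F,E)$. Transverse measurability follows from the fact that the construction is natural in the leafwise spectral data and that $\xi \mapsto e^{i\xi D_L^E}$ is a strongly measurable family in the transverse parameter.

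Finally, for the kernels of $B g(D_L^E)$ and $g(D_L^E) B$, I would note that $B$ is a leafwise differential operator with uniformly bounded smooth coefficients, and that $B g(D_L^E) = (B g)(D_L^E)$ only when $B$ is a polynomial in $D_L^E$; in general one writes $k_{Bg(D_L^E)}(x,y) = B_x\, k_{g(D_L^E)}(x,y)$ and $k_{g(D_L^E)B}(x,y) = B^t_y\, k_{g(D_L^E)}(x,y)$, so these kernels inherit the same leafwise support and uniform derivative bounds, and the $\Gamma_s(F,E)$ conclusion follows from the same decomposition. The main obstacle will be verifying the uniformity of the Sobolev constants across leaves and charts; this is where the bounded geometry of $M$, $F$, $E$, and $\nabla$ is essential, and it is exactly the point at which the compact argument of \cite{HL90} needs to be replaced.
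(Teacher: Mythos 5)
Your proposal is correct and follows exactly the approach the paper defers to: the paper does not spell out a proof of this theorem but cites \cite{HL90}, Theorem 2.3.7, and states at the start of Section 3 that the proofs carry over verbatim once bounded geometry replaces compactness of $M$, which is precisely the finite-propagation-speed / Fourier-inversion / elliptic-regularity argument you give.
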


Schwartz functions can be approximated by elements in $C^{\infty}_c(\R)$, and using the Fourier inversion formula to define operators works well in our setting. For more on this see Section \ref{RelInd}.  If $g$ is a Schwartz function, then estimates by Schwartz functions whose Fourier transforms are in $C^{\infty}_c(\R)$, as given in \cite{HL90}, along with bounded geometry, show that $k_{g(D_L^E)}(x,y)$ is uniformly bounded on $M \times M$.  The same holds for 
the Schwartz kernels of $Bg(D_L^E)$, and $g(D_L^E)B$.  In particular, $\tr(k_{g(D_L^E)}(x,x))$, $\tr(k_{Bg(D_L^E)}(x,x))$, and $\tr(k_{g(D_L^E)B}(x,x))$ are uniformly bounded on $M$.  Thus we get,
\begin{theorem}{\cite{HL90},Theorem 2.3.8.} \label{SCF&D2} 
Suppose that $g$ is a Schwartz function. Then $\tr(g(D_L^E))$ and $\htr(g(D_L^E))$ exist.  If $B$ is a differential operator on $E$ along $F$ with smooth bounded coefficients,  then $\tr(Bg(D_L^E))$, $\htr(Bg(D_L^E))$, $\tr(g(D_L^E)B)$, and $\htr(g(D_L^E)B)$ exist.  The same holds for the super traces provided the operators are super operators.
\end{theorem}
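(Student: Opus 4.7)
The strategy is to reduce the general Schwartz case to the one already handled in Theorem \ref{SCF&D1} via approximation, and then to use uniform kernel bounds to pass to the limit. Given a Schwartz function $g$ on $\R$, choose a smooth bump $\chi \in C_c^{\infty}(\R)$ with $\chi \equiv 1$ near $0$ and set $g_n = \mathcal{F}^{-1}(\chi(\xi/n)\widehat{g}(\xi))$. Then each $g_n$ is Schwartz with $\widehat{g}_n \in C_c^{\infty}(\R)$, and $g_n \to g$ in the Schwartz topology. By Theorem \ref{SCF&D1} the Schwartz kernels $k_{g_n(D_L^E)}$, $k_{Bg_n(D_L^E)}$, and $k_{g_n(D_L^E)B}$ all lie in $\Gam_s(F,E)$, so their traces and Haefliger traces exist and are well-defined via the constructions of Section \ref{results}.

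The key analytic step, following \cite{HL90}, is to show that the Schwartz kernel of $g(D_L^E)$ is uniformly bounded on $M \times M$, with a bound depending only on a finite number of Schwartz seminorms of $g$ and on the bounded-geometry constants. One uses the Fourier inversion formula $g(D_L^E) = (2\pi)^{-1}\int_{\R}\widehat{g}(\xi)e^{i\xi D_L^E}\,d\xi$ together with the unit propagation speed of the leafwise wave operators $e^{i\xi D_L^E}$. On each leaf this reduces the pointwise estimate of $k_{g(D_L^E)}(x,y)$ to a weighted integral of wave kernels on geodesic balls of radius $|\xi|$; bounded geometry provides uniform Sobolev estimates on these balls, and the rapid decay of $\widehat{g}$ makes the $\xi$-integral absolutely convergent. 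The same argument applied to $Bg(D_L^E)$ and $g(D_L^E)B$ — after commuting $B$ past $e^{i\xi D_L^E}$ and bounding the resulting terms by the uniformly bounded coefficients of $B$ and the uniformly bounded covariant derivatives of the wave kernel — gives uniform bounds on their Schwartz kernels as well. In particular, $k_{g_n(D_L^E)}(x,y) \to k_{g(D_L^E)}(x,y)$ locally uniformly with a uniform-in-$n$ dominating bound.

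Once uniform boundedness of $k_{g(D_L^E)}(x,y)$ (and of the two variants involving $B$) on $M \times M$ is established, the diagonal restriction $x \mapsto \tr(k_{g(D_L^E)}(x,x))$ is a uniformly bounded, leafwise smooth, transversely measurable function on $M$. Using the partition of unity $\{\phi_i\}$ subordinate to the good cover $\cU$, we compute
$\tr(g(D_L^E)) = \sum_i \int_{U_i}\phi_i\,\tr(k_{g(D_L^E)}(x,x))\,dx_F$,
where each integrand is compactly supported in $U_i$ and the plaques of $U_i$ have uniformly bounded volume. Since $\dd \int_{U_i}$ satisfies the Dominated Convergence Theorem, we may pass the limit $g_n \to g$ through, producing a uniformly bounded measurable function on the transversal $T$; this establishes the existence of $\tr(g(D_L^E))$, and projection to $\cA_c^0(M/F)$ gives $\htr(g(D_L^E))$. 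The arguments for $\tr(Bg(D_L^E))$, $\htr(Bg(D_L^E))$, $\tr(g(D_L^E)B)$, and $\htr(g(D_L^E)B)$ are identical, and the super-trace versions follow since $\tr_s$ is a signed linear combination of ordinary traces on the $\Z_2$-graded summands of $E$.

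The main obstacle is the uniform kernel bound when $B$ is present: commuting $B$ past the wave operator $e^{i\xi D_L^E}$ produces terms involving iterated covariant derivatives of the wave kernel, and these must be controlled uniformly in $\xi$ so that the product with $\widehat{g}(\xi)$ remains integrable in $\xi$. This is precisely where the bounded-geometry assumptions on $(M,F,E,\nabla)$ and on the coefficients of $B$ are essential — they guarantee uniform Sobolev constants and uniform bounds on the relevant Christoffel symbols and curvature terms that appear in the commutators.
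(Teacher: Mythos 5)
Your proposal follows essentially the same route as the paper: approximate $g$ by Schwartz functions with compactly supported Fourier transform, invoke Theorem~\ref{SCF&D1} for those, and use the Fourier-inversion / finite-propagation / bounded-geometry Sobolev machinery from \cite{HL90} to show that $k_{g(D_L^E)}(x,y)$ (and the $B$-variants) is uniformly bounded on $M\times M$, from which the existence of the traces follows because plaque volumes are uniformly bounded. The paper packages this in a single paragraph citing \cite{HL90} rather than spelling it out, but the underlying argument is the one you give; the only cosmetic remark is that once uniform boundedness of $k_{g(D_L^E)}$ on the diagonal is established directly, the dominated-convergence step in your last paragraph is not strictly needed to conclude that $\tr(g(D_L^E))\in\cA^0_c(T)$.
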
 

Since  for  $t > 0$, $e^{-tx^2}$ is a Schwartz function,  $\tr_s(e^{-t(D_L^E)^2})$ and $\htr_s(e^{-t(D_L^E)^2})$ exist.  Classical local results give the following.  See \cite{Ge86}. 
Denote by $\AS (D_L^E)$ the characteristic differential form associated to $D_L^E$ by the local Atiyah-Singer Index Theorem, \cite{ABP73,Gi73}.  Denote its restriction to $C^{\infty}(\bigwedge T^*F)$ by $\AS (D_L^E)_L$.
\begin{theorem}\label{zerolim}$\lim_{t \to 0}  \tr_s(k_{e^{-t(D_L^E)^2}}(x,x)) = \AS (D_L^E)_L(x)$ uniformly on $M$.   Thus,
$$
 \dd \lim_{t \to 0} \tr_s(e^{-t(D_L^E)^2})    \,\, = \,\,    \int_F \AS (D_L^E)_L \quad \text{in \, $\cA^0_c(T)$},  \,\, \text{and}
$$  
$$
\lim_{t \to 0} \htr_s(e^{-t(D_L^E)^2})    \,\, = \,\,  \Big[  \int_F \AS (D_L^E)_L \Big] \quad \text{in \, $\cA^0_c(M/F)$}.
$$  
\end{theorem}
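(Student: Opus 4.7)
The plan is to reduce the global uniform statement to the classical local Atiyah--Singer index theorem applied leaf by leaf, and then to promote the pointwise limit on each leaf to a uniform limit across all of $M$ by invoking bounded geometry. First, I recall that for any single leaf $L$ and any $x \in L$, the restricted operator $D_L^E \,|_L$ is an honest Dirac operator on the complete Riemannian manifold $L$ with Clifford bundle $E_L$. A fundamental property of the heat kernel of $(D_L^E)^2$ is that its value $k_{e^{-t(D_L^E)^2}}(x,x)$ depends only on a small geodesic neighborhood of $x$ in $L$, up to terms of order $O(e^{-c/t})$ (finite propagation / unit-propagation estimates, as used e.g.\ in \cite{Ge86}). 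On such a neighborhood, Getzler's rescaling argument produces an asymptotic expansion whose supertrace converges as $t\to 0^+$ to the Atiyah--Singer integrand $\AS(D_L^E)_L(x)$.

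Second, I would argue that under our bounded geometry hypotheses, all the constants appearing in this expansion are uniform in $x \in M$. The coefficients in Getzler's rescaled expansion are universal polynomial expressions in the curvature of $L$, the curvature of $\nabla$, and their covariant derivatives, all evaluated at $x$. Since we have assumed that $M$, each leaf of $F$, the metric on $E$, and the connection $\nabla$ all have bounded geometry, each of these tensors and their derivatives is uniformly bounded on $M$, so that the error bound
\[
\bigl|\tr_s k_{e^{-t(D_L^E)^2}}(x,x) - \AS(D_L^E)_L(x)\bigr| \,\leq\, C\, t
\]
holds with a constant $C$ independent of $x$ and of the leaf $L$ through $x$. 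This delivers the first uniform convergence statement.

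Third, to upgrade to convergence of $\tr_s(e^{-t(D_L^E)^2})$ in $\cA_c^0(T)$, I note that $\tr_s(e^{-t(D_L^E)^2})$ is by definition the image of $\tr_s k_{e^{-t(D_L^E)^2}}(x,x)$ under leafwise integration $\int_F$, which was noted earlier in the paper to be $\int_{U_i}\phi_i(\cdot)$ carried out on each chart of the good cover, and to satisfy the Dominated Convergence Theorem on each such chart (since these are compact fibrations). On each $U_i$ the integrand converges uniformly on $U_i$ to $\AS(D_L^E)_L$ and is uniformly bounded by the preceding estimate, so the dominated convergence theorem applied chart by chart gives
\[
\lim_{t\to 0}\int_F \tr_s k_{e^{-t(D_L^E)^2}}(\cdot,\cdot) \,=\, \int_F \AS(D_L^E)_L
\]
in $\cA_c^0(T)$. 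The final statement in $\cA_c^0(M/F)$ then follows by applying the continuous projection $[\,\cdot\,]:\cA_c^0(T)\to \cA_c^0(M/F)$.

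The main obstacle, and the only nontrivial point, is justifying the uniformity in $x$ of the local index theorem's error term. This is where bounded geometry is essential: without it, one only has leafwise pointwise convergence with constants depending on the local injectivity radius and on pointwise bounds of the curvature tensor and connection, and these could blow up as one moves off to infinity in $M$. Everything else is a routine combination of Getzler's local argument with the dominated convergence property of $\int_F$ recorded earlier.
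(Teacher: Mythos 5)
Your proposal is correct and is essentially the argument the paper has in mind; the paper gives no detailed proof of Theorem \ref{zerolim} and simply cites the classical local index theorem (\cite{Ge86}, \cite{ABP73,Gi73}) together with the standing bounded geometry assumptions, which is precisely the reduction you carry out. You have correctly identified the one point that needs care — uniformity in $x$ of the remainder in Getzler's expansion — and your explanation (universal polynomials in curvatures and their covariant derivatives, all uniformly bounded by the bounded geometry hypotheses on $M$, the leaves, $E$, and $\nabla$) is the right justification; the passage from uniform pointwise convergence to convergence of $\int_F$ chart by chart and then to $\cA_c^0(M/F)$ via the projection is routine and matches the remarks the paper makes just before the theorem about $\int_{U_i}$ being integration over a compact fibration.
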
  

Denote the Schwartz kernel of the graded projection onto the leafwise kernel of  $(D_L^E)^2$ by $k_{P_0}$.  The rest of Section 2.3 of {\cite{HL90} is taken up with the technicalities of proving the following.
\begin{theorem} {\cite{HL90}, Theorem 2.3.11.}\label{schwartz3}
$\dd  \lim_{t \to \infty}   \tr_s(k_{e^{-t(D_L^E)^2}}(x,x)) \,\, = \,\,  \tr_s(k_{P_0}(x,x))$  pointwise on $M$.
 \end{theorem}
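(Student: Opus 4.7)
The strategy is to exploit the semigroup property of $A_t := e^{-t(D_L^E)^2} - P_0$ together with its strong $L^2$-convergence to zero on each leaf, and to promote this to pointwise convergence of the diagonal Schwartz kernel. Fix $x \in M$ and let $L$ be the leaf through $x$. First, $A_t$ is bounded, non-negative, and self-adjoint on $L^2(E_L)$; since $P_0$ is the orthogonal projection onto $\ker D_L^E = \ker (D_L^E)^2$ and commutes with $e^{-s(D_L^E)^2}$, a direct computation yields the semigroup law $A_{s+t} = A_s A_t$. By essential self-adjointness of $(D_L^E)^2$ on a complete leaf, the spectral theorem writes $e^{-t(D_L^E)^2} = \int_{[0,\infty)} e^{-t\lambda}\, dE_\lambda$; subtracting $P_0 = E_{\{0\}}$ removes the atom at zero, and dominated convergence in the spectral measure yields $A_t \to 0$ in the strong operator topology on $L^2(E_L)$ as $t \to \infty$.

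Next I establish a pointwise kernel identity. For $t>0$, both $k_{e^{-t(D_L^E)^2}}$ and $k_{P_0}$ are smooth and uniformly bounded, the first by Theorem~\ref{SCF&D1} combined with the standard Gaussian off-diagonal estimates from bounded geometry, the second by leafwise elliptic regularity applied to $(D_L^E)^2 k_{P_0}(\cdot, y) = 0$. Self-adjointness of $A_t$ gives $k_{A_t}(y,x) = k_{A_t}(x,y)^*$ as maps between fibers. Writing $A_t = A_{t/2}\circ A_{t/2}$ at the level of Schwartz kernels and setting $\psi_s^{x,v}(y) := k_{A_s}(y,x)\, v$ for $v \in E_x$, one obtains
\begin{equation*}
\langle k_{A_t}(x,x)\, v,\, v\rangle_{E_x} \;=\; \int_L \bigl|k_{A_{t/2}}(y,x)\, v\bigr|_{E_y}^2\, dy_F \;=\; \|\psi_{t/2}^{x,v}\|_{L^2(E_L)}^2,
\end{equation*}
so that $\psi_{t/2}^{x,v} \in L^2(E_L)$.

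To conclude, fix any $s>0$; then for all $t \ge 2s$ the semigroup law gives $\psi_{t/2}^{x,v} = A_{t/2 - s}\, \psi_s^{x,v}$ in $L^2(E_L)$. As $t \to \infty$, strong convergence of $A_{t/2 - s}$ to zero forces $\|\psi_{t/2}^{x,v}\|_{L^2} \to 0$. Summing over a finite orthonormal basis of $E_x$ with signs from the $\Z_2$-grading gives
\begin{equation*}
\tr_s\bigl(k_{A_t}(x,x)\bigr) \;=\; \tr_s\bigl(k_{e^{-t(D_L^E)^2}}(x,x)\bigr) - \tr_s\bigl(k_{P_0}(x,x)\bigr) \;\longrightarrow\; 0,
\end{equation*}
which is the theorem.

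The main technical obstacle is justifying the kernel composition identity in the middle paragraph. The integral $\int_L k_{A_{t/2}}(x,z)\, k_{A_{t/2}}(z,x)\, dz$ must converge absolutely, which is not a priori clear for the cross-terms involving $k_{P_0}$, since $P_0$ need not decay at infinity along a non-compact leaf. The remedy is to rewrite $A_t = e^{-(t/3)(D_L^E)^2}\, A_{t/3}\, e^{-(t/3)(D_L^E)^2}$ using the identity $e^{-s(D_L^E)^2} P_0 = P_0$, so that the Gaussian off-diagonal decay of the heat kernel sandwiches $A_{t/3}$ and supplies the required integrability. This bounded-geometry substitute for the compactness of $M$ used in \cite{HL90} is the point that must be verified with care.
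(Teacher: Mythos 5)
The paper itself does not reprove this theorem --- it cites \cite{HL90}, Theorem 2.3.11, and remarks that the bulk of Section 2.3 there is devoted to it. Your argument is correct, and the semigroup plus spectral-theorem route you take is the natural one (and essentially what is done in the cited reference): write $A_t = e^{-t(D_L^E)^2} - P_0$, note $A_{s+t} = A_s A_t$ and $A_t \to 0$ strongly, then identify $\lan k_{A_t}(x,x)v,v \ran$ with $\|A_{t/2}\delta_x^v\|_0^2$ and let $t\to\infty$.

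One small overcaution: the obstacle you flag in the final paragraph is not actually there, so the $A_{t/3}$-sandwich is unnecessary. By the Spectral Mapping Theorem in the form recorded in the proof of Lemma \ref{finiteint}, for any bounded Borel $g$ one has $\|g((D_L^E)^2)\|_{-\ell,\ell} \leq \sup_{x\geq 0}(1+x)^\ell |g(x)|$; taking $g = \chi_{\{0\}}$ shows $P_0$ is bounded from $H^{-\ell}(E_L)$ to $H^{\ell}(E_L)$ with norm $1$ for every $\ell$, since $(1+x)^\ell\chi_{\{0\}}(x) = \chi_{\{0\}}(x)$. Bounded geometry puts $\delta_x^v$ in some $H^{-\ell}$ with a uniform norm, hence $P_0\delta_x^v = k_{P_0}(\cdot,x)v \in L^2(E_L)$ directly, and the same holds for $e^{-s(D_L^E)^2}\delta_x^v$. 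Both terms of $\psi_s^{x,v}$ are therefore in $L^2$, the integral $\int_L |k_{A_{t/2}}(z,x)v|^2\,dz_F$ converges absolutely, and the diagonal kernel composition identity
\begin{equation*}
\lan k_{A_t}(x,x)v,v\ran \;=\; \lan A_t\delta_x^v,\delta_x^v\ran \;=\; \|A_{t/2}\delta_x^v\|_0^2
\end{equation*}
follows from $A_t = A_{t/2}^*A_{t/2}$ with no further off-diagonal estimates. With that observation your proof stands as is.
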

In general, this convergence is not uniform. 
However, since $\tr_s(k_{e^{-t(D_L^E)^2}}(x,x))$ is uniformly bounded on $M$,  and $\dd \int_{U_i}$ is essentially integration over a compact fibration, it follows that for any \underline{finite} collection of $(U_i,T_i)$, 
$$
 \lim_{t \to \infty} \sum_i \int_{U_i} \phi_i(x)  \tr_s(k_{e^{-t(D_L^E)^2}}(x,x)) \,\, = \,\, \sum_i \int_{U_i}  \phi_i(x) \tr_s(k_{P_0}(x,x))  
\,\, \text{ in } \, \sum_i \cA^0_c(T_i) \subset \cA^0_c(T). 
$$

Next, we have,
\begin{theorem}{\cite{HL90}, Theorem 5.1.}\label{indept} 
The element $\htr_s(e^{-t(D_L^E)^2}) \in \cA^0_c(M/F)$ is independent of $t$. 
\end{theorem}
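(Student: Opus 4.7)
My plan is to adapt the classical McKean--Singer argument to the Haefliger setting, using the trace property for odd operators (Theorem \ref{trprop}) as the key algebraic input. First I would show that the map $t \mapsto \htr_s(e^{-t(D_L^E)^2})$ is differentiable on $(0,\infty)$ with derivative
$$
\frac{d}{dt}\htr_s(e^{-t(D_L^E)^2}) \,\, = \,\, -\htr_s\bigl((D_L^E)^2\, e^{-t(D_L^E)^2}\bigr).
$$
Both sides make sense by Theorem \ref{SCF&D2}, since $e^{-tx^2}$ and $x^2 e^{-tx^2}$ are Schwartz functions and $(D_L^E)^2$ is a differential operator with smooth bounded coefficients along $F$. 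Differentiability at the Haefliger level is obtained by differentiating $\tr_s(k_{e^{-t(D_L^E)^2}}(x,x))$ pointwise in $x$ and using bounded geometry together with the uniform Schwartz-type estimates on the leafwise heat kernels (as in \cite{HL90}) to pass the derivative through the leafwise integral $\int_F$, and then through the Haefliger projection $[\cdot]$, which is continuous on bounded families supported in the fixed good cover.

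The heart of the argument is then to show that $\htr_s((D_L^E)^2 e^{-t(D_L^E)^2}) = 0$ in $\cA^0_c(M/F)$. For this I would factor
$$
(D_L^E)^2 e^{-t(D_L^E)^2} \,\, = \,\, A_t \circ A_t, \qquad A_t \, := \, D_L^E \, e^{-(t/2)(D_L^E)^2},
$$
using the functional calculus. The operator $A_t$ is odd, since $D_L^E$ is odd and $e^{-(t/2)(D_L^E)^2}$ is even. Assuming the Schwartz kernels of $A_t$ lie in $\Gam_s(F,E)$, Theorem \ref{trprop} applied to the odd--odd product gives
$$
\htr_s(A_t \circ A_t) \,\, = \,\, -\htr_s(A_t \circ A_t),
$$
forcing $\htr_s((D_L^E)^2 e^{-t(D_L^E)^2}) = 0$. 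Combined with the differentiation step above, this shows $\htr_s(e^{-t(D_L^E)^2})$ is constant in $t$.

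The main obstacle will be justifying that the Schwartz kernel of $A_t = D_L^E e^{-(t/2)(D_L^E)^2}$ actually belongs to the space $\Gam_s(F,E)$ required by Theorem \ref{trprop}, because the function $x \mapsto x e^{-(t/2)x^2}$ is Schwartz but its Fourier transform does not have compact support, so Theorem \ref{SCF&D1} does not apply directly. I would handle this by approximation: choose a sequence $g_n$ of Schwartz functions whose Fourier transforms lie in $C_c^\infty(\R)$ and such that $g_n(x) \to x e^{-(t/2)x^2}$ in Schwartz-seminorm, so that by Theorem \ref{SCF&D1} the kernels $k_{g_n(D_L^E)}$ lie in $\Gam_s(F,E)$, and by the estimates preceding Theorem \ref{SCF&D2} the kernels converge locally uniformly together with all leafwise derivatives, with uniform bounds coming from bounded geometry. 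This places the limiting kernel in $\Gam_s(F,E)$, and the Haefliger super-trace of the composition $A_t \circ A_t$ is the limit of the Haefliger super-traces of $g_n(D_L^E) \circ g_n(D_L^E)$, to which Theorem \ref{trprop} applies termwise. Passing to the limit yields $\htr_s(A_t \circ A_t) = -\htr_s(A_t \circ A_t)$, completing the argument.
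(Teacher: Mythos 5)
Your approach is essentially the same as the paper's: the one-line description ``the proof involves taking limits of approximations by elements of $\Gam_s(F,E)$'' is exactly the McKean--Singer argument you carry out, with the odd operator $A_t = D_L^E \, e^{-(t/2)(D_L^E)^2}$ factoring $(D_L^E)^2 e^{-t(D_L^E)^2}$ and the oddness antisymmetry of Theorem \ref{trprop} forcing the super-trace to vanish, all passed through an approximation by finite-propagation kernels because $A_t$ itself fails to be in $\Gam_s(F,E)$.

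One slip to correct: your claim ``This places the limiting kernel in $\Gam_s(F,E)$'' is false, and the reasoning leading to it is the wrong kind of argument. As the supports of $\widehat{g}_n$ grow, the propagation of $g_n(D_L^E)$ grows without bound; local uniform convergence with bounded derivatives does not control the propagation of the limit, and indeed $A_t = D_L^E e^{-(t/2)(D_L^E)^2}$ has infinite propagation speed, so $k_{A_t} \notin \Gam_s(F,E)$ by the very definition (which requires a uniform bound on the leafwise length of the charts $\gamma_{ij\ell}$ occurring). The correct logic is the one you actually run next: you have the scalar identity $\htr_s(g_n(D_L^E) \circ g_n(D_L^E)) = 0$ for each $n$ by Theorem \ref{trprop}, and you pass this identity to the limit in $\cA^0_c(M/F)$ using the uniform bounds and the dominated-convergence property of $\int_F$ on each $U_i$. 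That convergence step is exactly what needs care, and it neither requires nor delivers $k_{A_t} \in \Gam_s(F,E)$; the Haefliger trace of $A_t \circ A_t$ is defined directly via Theorem \ref{SCF&D2} since $z^2 e^{-tz^2}$ is Schwartz. So delete the ``places the limiting kernel in $\Gam_s(F,E)$'' claim and your argument is sound.
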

The proof involves taking limits of approximations by elements of $\Gam_s(F,E)$.

\medskip

Finally, we have two useful results from the Spectral Mapping Theorem.
\begin{prop}\label{ptwseconv} Suppose that the sequence of  bounded Borel functions $f_n(z)$ converges pointwise to $f(z)$, and for $\ell$ sufficiently large, $||(1 + z^2)^{\ell/2}f_n||_{\infty}$ is a bounded sequence. Then the Schwartz kernel $k_{f_n(D_L^E)}$ converges to $k_{f(D_L^E)}$ pointwise.
\end{prop}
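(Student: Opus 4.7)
The plan is to exhibit $f_n(D_L^E)$ as a sandwich $A\, g_n(D_L^E)\, A$ in which $A$ is a sufficiently smoothing power of the resolvent and $g_n$ is uniformly bounded on $L^2$, and then deduce pointwise convergence of the Schwartz kernel from strong $L^2$-convergence of the middle factor.

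First, I would choose $\ell$ large enough that both the given bound $\|(1+z^2)^{\ell/2} f_n\|_{\infty} \leq C$ (with $C$ independent of $n$) and the Sobolev-type inequality $\ell > p$ (with $p = \dim F$) hold. Set $g_n(z) := (1+z^2)^{\ell/2} f_n(z)$ and $g(z) := (1+z^2)^{\ell/2} f(z)$. Pointwise convergence $f_n \to f$ gives $g_n \to g$ pointwise, and passing to the limit in the uniform bound yields $\|g\|_{\infty} \leq C$ as well. Since $D_L^E$ is essentially self-adjoint on each complete leaf and the scalar identity $(1+z^2)^{-\ell/4}\, g_n(z)\, (1+z^2)^{-\ell/4} = f_n(z)$ is immediate, the bounded Borel functional calculus produces the factorization
$$
f_n(D_L^E) \;=\; A\, g_n(D_L^E)\, A, \qquad A := (1 + (D_L^E)^2)^{-\ell/4}.
$$

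Next, because $M$ and $F$ have bounded geometry and $\ell > p$, the leafwise Sobolev embedding (with constants uniform across leaves, as in Shubin's bounded-geometry framework) combined with the mapping property $A : L^2(E_L) \to H^{\ell/2}(E_L)$ from the functional calculus for $(D_L^E)^2$ shows that $A : L^2(E_L) \to C^0(E_L)$ is bounded uniformly in the leaf. In particular, the Schwartz kernel $k_A(x,y)$ is a section of $E \boxtimes E^*$ whose slices $\eta_y := k_A(\cdot,y)$ and $\xi_x := k_A(x,\cdot)$ are in $L^2$ with norms bounded independently of $x,y$ in the common leaf. The kernel of the composition $A\, g_n(D_L^E)\, A$ is then given by the inner-product formula
$$
k_{f_n(D_L^E)}(x,y) \;=\; \bigl\langle\, g_n(D_L^E)\, \eta_y,\; \xi_x\, \bigr\rangle_{L^2(E_L)},
$$
with the analogous formula for $f(D_L^E)$ in place of $f_n(D_L^E)$.

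The final step is strong convergence $g_n(D_L^E)\,\eta_y \to g(D_L^E)\,\eta_y$ in $L^2(E_L)$. By the spectral theorem, for every $\phi \in L^2(E_L)$,
$$
\bigl\|g_n(D_L^E)\phi - g(D_L^E)\phi\bigr\|_{L^2}^2 \;=\; \int_{\R} \bigl|g_n(\lambda) - g(\lambda)\bigr|^2\, d\mu_\phi(\lambda),
$$
where $\mu_\phi$ is the finite spectral measure of $\phi$. The integrand tends pointwise to zero and is dominated by the integrable constant $(2C)^2$, so Lebesgue's dominated convergence theorem yields convergence to zero. Applying this with $\phi = \eta_y$ and pairing with $\xi_x$ gives $k_{f_n(D_L^E)}(x,y) \to k_{f(D_L^E)}(x,y)$ at every pair $(x,y)$ in a common leaf. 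The only mildly delicate point is the uniformity (over leaves) of the Sobolev constant entering the mapping $H^{\ell/2} \hookrightarrow C^0$, but this is standard in the bounded-geometry setting and is of the same nature as estimates already used in Theorems~\ref{SCF&D1} and~\ref{SCF&D2}.
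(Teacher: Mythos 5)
Your proposal is correct and follows essentially the same route as the paper: both arguments sandwich the kernel evaluation $k_{f_n(D_L^E)}(x,y)$ between vectors whose negative Sobolev (equivalently, $L^2$ after applying a fixed smoothing power of the resolvent) norms are uniformly bounded by bounded geometry, and then reduce pointwise kernel convergence to strong $L^2$ convergence of $g_n(D_L^E)=((1+z^2)^{\ell/2}f_n)(D_L^E)$ coming from the spectral theorem with dominated convergence. The only cosmetic difference is that you factor through $A=(1+(D_L^E)^2)^{-\ell/4}$ and name its kernel slices $\eta_y,\xi_x$, whereas the paper works directly with the Dirac delta sections $\delta_y^w,\delta_x^v\in H^{-\ell}$; these are the same objects, since $\eta_y=A\delta_y^w$ and $\xi_x=A\delta_x^v$.
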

\begin{proof} For $x \in M$ and $v \in E_x$, the fiber over $x$,  with $||v|| = 1$, denote by $\delta_x^v$ the distributional section of $E_L$ given by $\lan \delta_x^v, \sigma \ran = \lan \sigma(x), v \ran$.
Because of bounded geometry, there is $\ell$ sufficiently large, which depends only on the dimension of $F$, so that the Sobolev $-\ell$ norm 
$$
||\delta_x^v||_{-\ell} \; = \; ||(1 + (D_L^E)^2)^{-\ell/2}\delta_x^v||_0
$$
of  $\delta_x^v$  is uniformly bounded over all $x$ and $v$.  The norm $|| \cdot ||_0$ is the $L^2$ norm. 

The Spectral Mapping Theorem says that if a sequence of  bounded Borel functions $g_n$ converges pointwise to $g$ and the sequence $||g_n||_{\infty}$ is bounded, then $g_n(D_L^E)$ converges to $g(D_L^E)$ strongly, that is for any 
element $v$, $\lim_{n \to \infty}||g_n(D_L^E)(v) - g(D_L^E)(v)||_0 = 0$.

Now,  $||(k_{f_n(D_L^E)}  -  k_{f(D_L^E)} )(x,y)||$ is bounded by a finite sum of elements of the form 
$$
|  \lan (k_{f_n(D_L^E)}  -  k_{f(D_L^E)} )(x,y) )(w), v \ran |,
$$
 where $w \in E_y$, and $v \in E_x$ and both have norm $1$.
But, we have
$$
| \lan (k_{f_n(D_L^E)}  -  k_{f(D_L^E)} )(x,y) )(w), v \ran | \; = \; 
|\lan (f_n(D_L^E) - f(D_L^E))(\delta_y^w), \delta_x^v\ran| \; \leq \;
$$
$$
||(f_n(D_L^E) - f(D_L^E))(\delta_y^w)||_{\ell}   ||\delta_x^v||_{-\ell} \; = \;
||((1 + (D_L^E)^2)^{\ell/2}(f_n(D_L^E) - f(D_L^E)))(\delta_y^w)||_0 ||\delta_x^v||_{-\ell} \; = \;
$$
$$ 
||((1 + z^2)^{\ell/2}(f_n - f))(D_L^E)(\delta_y^w)||_0 \, ||\delta_x^v||_{-\ell}.
$$
As $f_n$ converges pointwise to $f$, $(1 + z^2)^{\ell/2}f_n$ converges pointwise to  $(1 + z^2)^{\ell/2}f$. Since  $||(1 + z^2)^{\ell/2}f_n||_{\infty}$ is bounded, $((1 + z^2)^{\ell/2}f_n)(D_L^E)$ converges strongly to  
$((1 + z^2)^{\ell/2}f)(D_L^E)$, so 
$$
\lim_{n \to \infty}||((1 + z^2)^{\ell/2}(f_n - f))(D_L^E)(\delta_y^w)||_0  \; = \; 0.
$$
\vspace{-1.0cm} 

\end{proof}

\begin{prop}\label{bddedconv} 
Suppose that $f_t \to f$ as $t \to 0$ in the Schwartz topology on the Schwartz functions.  Then,
$$
\lim_{t \to 0} k_{f_t(D_L^E)} \; = \; k_{f(D_L^E)}  \quad \text{uniformly}.
$$
\end{prop}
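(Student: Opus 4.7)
The plan is to follow the distributional-section framework used in the proof of Proposition \ref{ptwseconv}, and upgrade the pointwise convergence there to uniform convergence by exploiting the stronger Schwartz-topology hypothesis. First, I would reproduce the estimate of that proof: for $x,y$ in a common leaf $L$ and unit vectors $v \in E_x$, $w \in E_y$, bound $||(k_{f_t(D_L^E)} - k_{f(D_L^E)})(x,y)||$ by a finite sum of matrix coefficients, each controlled by
$$
|\lan (f_t(D_L^E) - f(D_L^E))(\delta_y^w), \delta_x^v \ran| \;\leq\; ||((1+z^2)^{\ell/2}(f_t - f))(D_L^E)(\delta_y^w)||_0 \cdot ||\delta_x^v||_{-\ell}.
$$
Here $\ell$ is chosen large enough (depending only on $\dim F$) so that bounded geometry gives $||\delta_x^v||_{-\ell} \leq C$ uniformly in $x, v$, and the analogous bound holds for $\delta_y^w$.

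The main new step is to produce a uniform-in-$(y,w)$ bound on $||((1+z^2)^{\ell/2}(f_t - f))(D_L^E)(\delta_y^w)||_0$. To do so, I would write $\delta_y^w = (1+(D_L^E)^2)^{\ell/2}\phi_y^w$, where $\phi_y^w := (1+(D_L^E)^2)^{-\ell/2}\delta_y^w$ lies in $L^2(E_L)$ with $||\phi_y^w||_0 = ||\delta_y^w||_{-\ell}$ uniformly bounded. By the functional calculus,
$$
((1+z^2)^{\ell/2}(f_t - f))(D_L^E)(\delta_y^w) \;=\; ((1+z^2)^{\ell}(f_t - f))(D_L^E)(\phi_y^w),
$$
and since $(1+z^2)^{\ell}(f_t - f)$ is a bounded Borel function on $\R$, the spectral theorem yields
$$
||((1+z^2)^{\ell}(f_t - f))(D_L^E)||_{L^2 \to L^2} \;\leq\; ||(1+z^2)^{\ell}(f_t - f)||_{\infty}.
$$

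Finally, convergence $f_t \to f$ in the Schwartz topology includes in particular $||(1+z^2)^{\ell}(f_t - f)||_{\infty} \to 0$, since this quantity is one of the defining Schwartz seminorms. Chaining the three estimates gives
$$
||(k_{f_t(D_L^E)} - k_{f(D_L^E)})(x,y)|| \;\leq\; C' \, ||(1+z^2)^{\ell}(f_t - f)||_{\infty}
$$
for all $x, y \in M$, with $C'$ depending only on bounded-geometry constants, which is uniform convergence. The only delicate point I anticipate is justifying the unbounded functional-calculus identity when applied to the distribution $\delta_y^w$; this is implicit in the proof of Proposition \ref{ptwseconv} and can be made rigorous by approximating $\delta_y^w$ in $H^{-\ell}$ by smooth compactly supported sections and invoking essential self-adjointness of $D_L^E$ on the complete leaf $L$.
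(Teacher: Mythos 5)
Your proof is correct and follows essentially the same path as the paper's: bound the matrix coefficients of the kernel difference against delta distributions, use bounded geometry to control the $H^{-\ell}$ norms of the deltas uniformly in the base point, and invoke the spectral mapping theorem so that the $H^{-\ell}\to H^{\ell}$ operator norm of $f_t(D_L^E)-f(D_L^E)$ is dominated by the Schwartz seminorm $\sup_{z}\lvert(1+z^2)^{\ell}(f_t(z)-f(z))\rvert$. The paper phrases the middle step directly in terms of the $\|\cdot\|_{-\ell,\ell}$ operator norm rather than unpacking $\delta_y^w = (1+(D_L^E)^2)^{\ell/2}\phi_y^w$, but this is only a notational difference, not a different argument.
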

\begin{proof}
By the Spectral Mapping Theorem $||g(D_L^E)|| \leq \sup_{z \in \R} |g(z)|$.   Now
$f_t \to f$ as $t \to 0$ in the Schwartz topology gives that for all $n \geq 0$, 
$
\sup_{z \in \R}|z^n (f_t(z) - f(z))| \to 0 \quad \text{as} \quad t \to 0.
$
As above,  we have,
$$
| \lan (k_{f_t(D_L^E)}  -  k_{f(D_L^E)} )(x,y) )(w), v \ran | \;\leq \; 
||(f_t(D_L^E) - f(D_L^E))(\delta_y^w)||_{\ell}   ||\delta_x^v||_{-\ell} \;\leq \;
$$
$$
||f_t(D_L^E) - f(D_L^E)||_{-\ell,\ell}   ||\delta_y^w||_{-\ell} || \delta_x^v ||_{-\ell}, 
$$
where $||f_t(D_L^E) - f(D_L^E)||_{-\ell,\ell} $ is the norm of operator from the $-\ell$ Sobolev space to the  $\ell$ Sobolev space.  Now 
$$
||f_t(D_L^E) - f(D_L^E)||_{-\ell,\ell}  \; \leq \;  \sup_{z \in \R}|(1 + z^2)^{\ell}(f_t(z) - f(z))|,
$$
which goes to zero as $t\to 0$, independently of $\delta_y^w$ and $\delta_x^v$.
\end{proof}

\section{The foliation relative measured index theorem }\label{RelInd} 

In this section, we assume that we have two foliated manifolds $(M, F)$ and $(M', F')$ as above,  and two  $\Z_2$ graded odd leafwise Dirac operators $D_L^E$ and $D_L^{E'}$ acting on Clifford bundles $E \to M$ and $E' \to M'$, with Clifford compatible Hermitian connections $\nabla$ and $\nabla'$.    We further assume that there are compact subspaces $\cK = M\smallsetminus V$ and $\cK' = M'\smallsetminus V'$ of $M$ and $M'$ with a  bundle morphism $\Phi=( \phi, \varphi )$ from $E\to V$ to $E'\to V'$.  We assume that $\varphi:V \to V'$ is an isometry with $\varphi^{-1}(F') = F$,  that $\phi :E\vert_V \to E'\vert_{V'}$ is an isomorphism, and that $\phi^*(\nabla' \, |_{V'}) = \nabla \, |_V$. Thus,  the well defined (since they are differential operators) restrictions of $D_L^E$ and $D_L^{E'}$ to the sections over $V$ and $V'$ agree through $\Phi$, i.e.
$$
(\Phi^{-1})^*\circ D_L^E \circ \Phi^* \,|_{V'} = D_L^{E'} \,|_{V'}. 
$$
Without loss of generality, we may assume that $\cK$ and $\cK'$ are the closures of open subsets.

We may assume that the good open covers $\cU$ and $\cU'$ on $M$ and $M'$  are $\varphi$ compatible on $V$ and $V'$.  That is, 
$$
\{ U_i \in \cU \, | \, \overline{U_i} \cap \cK = \emptyset \} \,\, =  \,\,  \{ \varphi^{-1}(U_i') \, | U_i' \in \cU',  \overline{U_i'} \cap \cK' = \emptyset \}.
$$
Set 
$$
\cU_V \,\, = \,\, \{ U_i \in \cU \, | \, \overline{U_i} \cap \cK = \emptyset \} \quad \text{and} \quad    \cU'_{V'} \,\, =  \,\,  \{ U_i' \in \cU' \, |   \overline{U_i'} \cap \cK' = \emptyset \},
$$
and 
$$
T_V \,\, = \,\, \{ T_i \in \cU \, | \, \overline{U_i} \cap \cK = \emptyset \} \quad \text{and} \quad    T'_{V'} \,\, =  \,\,  \{ T_i' \in \cU' \, |   \overline{U_i'} \cap \cK' = \emptyset \}.
$$  
Thus  $\varphi^*: \cA_c^0 (T'_{V'}) \ \to \cA_c^0 (T_V)$ is an isomorphism from the functions
supported on the transversals in  $\cU'_{V'}$ to the functions supported on the transversals  in  $\cU_{V}$. Denote by $T_{\cK} = T \ssm T_V$, the transversals which are \underline{not} in $\cU_{V}$, and similarly for  $T'_{\cK'}$, both of which are relatively compact. 

Finally, we assume that we have  invariant  transverse measures $\Lam$ and $\Lam'$  on $(M, F)$ and $(M', F')$, which are $\varphi$ compatible on $\cU_V$ and $\cU'_{V'}$, that is, for any $\alp' \in  \cA^0_c (T') $ and $(U'_i,T'_i) \in \cU'_{V'}$,
$$
\int_{\varphi^{-1}(T'_i)}  \varphi^*(\alp' |_{T'_i}) \, d\Lam \ \,\, = \,\,   \int_{T'_i}\alp' |_{T'_i} \,  d\Lam'.
$$
Recall that $\Lam$ is a measure on each $T_i$ so that if $f_{ij}:T_i \to T_j$ is a local diffeomorphism induced by the holonomy of $F$, then it preserves the measure.  By the obvious extension, $\Lam$ induces a Borel measure on any transversal to $F$  which is $\sigma$-finite, i.e.\ for any compact transversal $\what{T}$, $\dd \int _{\what{T}} 1  \ d \Lam$ is finite.  The leafwise measure $dx_F$ and $\Lam$ combine to give a global measure denoted $d\mu$.  In particular, 
$$
\int_M  \bullet \,d\mu \,\, = \,\, \int_T \left[\int_F\bullet \,  dx_F\right] d\Lam.
$$
Similarly for $\Lam'$.

\medskip

Next, we introduce the $\varphi$-relative space of Haefliger functions, along with their relative integration against  $\Lambda$ and $\Lambda'$.  Denote by  $g: M\to [0, \infty)$ and $g': M'\to [0, \infty)$ two smooth exhaustions such that for any $s \geq s_0$ for some $s_0>0$, the open subspaces, with compact complements, $M(s)=\{g>s\}$ and $M'(s)=\{g'>s\}$ agree through $\varphi$,   that  is $ \varphi(M(s_0)) = M'(s_0)$ and $g |_{M(s_0)} = g'\circ \varphi  |_{M(s_0)}$.   For $s \geq s_0$, set 
$$
T_s \,\, = \,\, \{T_i \in T \, | \, T_i  \cap M(s) \neq   \emptyset \},
$$
and similarly for $T_s'$.

Recall the subspaces  $W  \subset \maA_c^0(T)$ and $W'  \subset  \maA_c^0(T')$  from the definition of the bounded measurable Haefiger functions $\maA^0_c(M/F)$ and  $\maA^0_c(M'/F')$ given in Section \ref{results}.  Suppose that 
$(\omega,\omega') \in W \times  W'$, with  $\omega = \sum_{(\alp, \gam)} \alpha - h^*_{\gam} \alpha$ and $\omega' = \sum_{(\alp', \gam')} \alpha' - h^*_{\gam'} \alpha'$.   For simplicity, we have dropped the subscripts.  The  vector subspace $W \times_{\varphi} W' \subset W \times  W'$  consists of elements $(\omega,\omega')$  which are $\varphi$ compatible.   This means that all but a finite number of the $(\alp, \gam)$ and $(\alp', \gam')$ are paired, that is 
$$
\alpha \; = \; \varphi^*(\alpha') \quad \text{and} \quad \gamma' \; = \; \varphi\circ \gamma, \quad \text{so} \quad 
\alpha - h^*_{\gam} \alpha \; = \;  \varphi^*( \alpha' - h^*_{\gam'} \alpha').
$$
\begin{definition}
Given  functions $\beta  \in \maA_c^0(T)$ and $\beta'\in\maA_c^0(T')$,  the pair $(\beta,\beta') $ is $\varphi$-compatible if  there exists $s \geq s_0$ so that  $\beta = \varphi^*(\beta') $ on $T_s$.

Set 
$$
\maA_c^0 (M/F, M'/F'; \varphi) \,\, = \,\, \{ (\beta, \beta') \in \maA_c^0(T)\times \maA_c^0(T') \, | \, (\beta,\beta')  \text{ is $\varphi$ compatible} \}\, / (W \times_{\varphi} W'). 
$$
\end{definition}  

\begin{definition}   For $[(\beta, \beta')] \in \maA_c^0 (M/F, M'/F'; \varphi)$, set 
$$  
\lan [(\beta, \beta')], (\Lam,\Lam') \ran \,\, = \,\,  
\lim_{s \to \infty} \left(\int_{T \ssm T_s}  \hspace{-0.5cm} \beta \, d\Lam - \int_{T' \ssm T_s'}  \hspace{-0.5cm}\beta' \, d\Lam' \right).
$$
\end{definition}
\noindent 
This is well defined because any representative $(\beta, \beta')$ is $\varphi$ compatible, so the right hand side is eventually constant.  In addition, every $(\omega, \omega') \in W \times_{\varphi} W'$ is $\varphi$ compatible, so satisfies
$$
\lim_{s\to \infty} \left(\int_{T \ssm T_s}  \hspace{-0.4cm}  \omega \, d\Lam - \int_{T' \ssm T_s'} \hspace{-0.5cm} \omega' \, d\Lam' \right) \,\, = \,\,   0.
$$
To see this, recall that there is a global bound on the leafwise length of the $\gam$ and $\gam'$ in $\ome$ and $\ome'$.  This, and the fact that there are only finitely many unpaired  $(\alpha,\gam)$ and  $(\alpha',\gam')$, insures that for large $s$, every unpaired $(\alpha,\gam)$ will have both $D_{\gamma}$ and $R_{\gamma} \subset T \ssm T_s$, so $\dd \int_{T \ssm T_s}  \hspace{-0.4cm}   \alpha - h^*_{\gam} \alpha \, d\Lam$  will be zero, and similarly for  every unpaired $(\alpha',\gam')$.  Those $(\alpha,\gam)$ and $(\alpha',\gam')$ which are paired and appear in the integration, will have  $D_{\gamma}$ and/or $R_{\gamma} \subset T \ssm T_s$  with corresponding $D_{\gamma'}$ and/or $R_{\gamma'} \subset T' \ssm T'_s$.   In both cases, their integrals will cancel.

Throughout the paper, we denote the leafwise Schwartz kernel of a leafwise operator $A$ by $k_A(x,y)$, which is a section of the external tensor product bundle $E \boxtimes E^*  \simeq E \boxtimes E$ over $M\times M$.  The restriction of $k_A$ to the diagonal in $M\times M$ is then a section of $E\otimes E^*\simeq E\otimes E$ over $M$.
Two sections $\varphi_1$ and $\varphi_2$ of $E$ give the section $\varphi_1 \boxtimes \varphi_2$ of $E \boxtimes E$, which acts on  a section $\varphi$ of $E$ by, 
$$
(\varphi_1 \boxtimes \varphi_2)(\varphi) \,\, = \,\,   \langle \varphi_2, \varphi \rangle  \varphi_1.
$$
The restriction of $\varphi_1 \boxtimes \varphi_2$ to the diagonal is denoted $\varphi_1 \otimes \varphi_2$, and it is then clear that, 
$$
\tr(\varphi_1 \otimes \varphi_2) = \langle \varphi_1, \varphi_2 \rangle,     \quad   \text{that is}\quad                                            \tr(\varphi_1 \otimes \varphi_2)(x) = \langle \varphi_1(x), \varphi_2(x) \rangle.
$$
This extends to all sections $k_A$, by first restricting to the diagonal, so we have the suggestive notation $k_A \lan x,x \ran = \tr(k_A(x,x))$.

As $D_L^E$ and $D_L^{E'}$  are  odd super operators, we have 
$$
D_L^E \,\, = \,\, \left(\begin{array}{cc}  0& (D_L^E)^- \\ (D_L^E)^+  & 0\end{array}\right) \text{ and }
D_L^{E'} \,\, = \,\, \left(\begin{array}{cc}  0 & (D_L^{E'})^- \\ (D_L^{E'})^+ & 0\end{array}\right).
$$

Let $Q$ and $Q'$ be leafwise parametrices for $D_L^E$ and $D_L^{E'}$, respectively.  That is, they are finite propagation odd operators, (which are zero in the $+$ to $-$ direction), with $\varphi$  compatible remainders
$$
S=\Id - Q(D_L^E)^+, \,\, R=\Id - (D_L^E)^+ Q, \,\, S' =\Id - Q'(D_L^{E'})^+, \text{ and } R'=\Id - (D_L^{E'})^+Q'.
$$
The remainders have Schwartz kernels which 
belong to $\Gam_s (F, E)$ and $\Gam_s (F', E')$ respectively, and they are $\varphi$ compatible.  
Thus each remainder has finite propagation,  and for $s \geq s_0$ sufficiently large,  they are identified by $\Phi = (\phi, \varphi)$.  
For example,
$$
(\Phi^{-1})^*\circ S \circ \Phi^* \,|_{M'(s)} = S' \,|_{M'(s)}, 
$$
for $s$ so large that $S$ sends sections supported on $M(s)$ to sections supported on $M(s_0)$,
and similarly for $M'(s)$.  The same is true for their squares with a possibly larger $s_0$.   It is easy to check that such parametrices always exist, and that, because of bounded geometry,  they satisfy the properties we need.  See \cite{Shubin}, and the proof of Theorem \ref{mainSection4} below. Set
$$
\Ind (D_L^E)  \,\, = \,\, \htr(k_{S^2}) - \htr(k_{R^2}) \in \cA_c^0(M/F),
$$
and similarly for $\Ind (D_L^{E'})$.  

\begin{definition}
The relative measured index of the  pair $(D_L^E, D_L^{E'})$ of leafwise  Dirac operators is
\vspace{-0.3cm}
$$
\Ind_{\Lam, \Lam'}(D_L^E, D_L^{E'}) \,\, = \, 
\lim_{s \to \infty} \left(\int_{M \ssm M(s)}  \hspace{-1.0cm} k_{S^2} \lan x,x \ran  - k_{R^2} \lan x,x \ran\, d\mu  \,\, - \,\, 
\int_{M' \ssm M'(s)}    \hspace{-1.2cm}  k_{{S'}^2} \lan x,x \ran   - k_{{R'}^2} \lan x,x \ran \, d\mu'  \right).
$$
\end{definition}

\begin{remark}\label{nonsquare} We could as well use $S$, $R$, $S'$ and $R'$ in place of  $S^2$, $R^2$, ${S'}^2$ and ${R'}^2$ above.  Both are convenient for different applications.  The RHS still makes sense without the squares and is unchanged. One just writes  $S^2=S - Q [(D_L^E)^+ S]$ and $R^2=R-[(D_L^E)^+ S] Q$ and similarly for ${S'}^2$ and ${R'}^2$, and shows that all the extra terms cancel out in the $s$ limit.  For a proof see the Appendix.
We use the squares here and the non-squares in the proof of Theorem \ref{mainSection4} below.

\end{remark}

\begin{proposition}\label{indepQ}
$\Ind_{\Lam, \Lam'}(D_L^E, D_L^{E'})$  does not depend on the choice of  $(Q,Q')$ with $\varphi$ compatible remainders.  
\end{proposition}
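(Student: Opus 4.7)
The plan is to express the difference of the two candidate relative measured indices as the $\varphi$-relative Haefliger pairing of a commutator trace, and then to show this vanishes by a regularization in $\Gam_s$ combined with the graded cyclicity Theorem~\ref{trprop}.

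Set $A := Q_1 - Q_2$ and $A' := Q_1' - Q_2'$, both odd operators. From $S_i = I - Q_i(D_L^E)^+$ and $R_i = I - (D_L^E)^+ Q_i$ one reads off $S_1 - S_2 = -A (D_L^E)^+$ and $R_1 - R_2 = -(D_L^E)^+ A$, with analogous identities on $M'$. Although neither $A$ nor $(D_L^E)^+$ lies in $\Gam_s(F,E)$, the products $A(D_L^E)^+ = S_2 - S_1$ and $(D_L^E)^+ A = R_2 - R_1$ do—the non-smoothing $I$-terms cancel—and they are $\varphi$-compatible with their primed counterparts. Using Remark~\ref{nonsquare} to work with $S, R$ in place of $S^2, R^2$, the difference $\Ind_{\Lam, \Lam'}(D_L^E, D_L^{E'})|_{(Q_1, Q_1')} - \Ind_{\Lam, \Lam'}(D_L^E, D_L^{E'})|_{(Q_2, Q_2')}$ equals the $\varphi$-relative pairing of $\bigl(\tr k_{(D_L^E)^+ A} - \tr k_{A(D_L^E)^+},\; \tr k_{(D_L^{E'})^+ A'} - \tr k_{A'(D_L^{E'})^+}\bigr)$ against $(\Lam, \Lam')$; it therefore suffices to prove the $\varphi$-relative cyclicity identity $\htr_{\Lam, \Lam'}\bigl(A(D_L^E)^+\bigr) = \htr_{\Lam, \Lam'}\bigl((D_L^E)^+ A\bigr)$, and the primed analog.

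Theorem~\ref{trprop} demands that both factors of a product lie in $\Gam_s$, so I smooth each. Pick a family $\{\phi_\epsilon\}_{\epsilon>0}$ of even Schwartz functions on $\R$ with $\widehat{\phi_\epsilon} \in C_c^\infty(\R)$, $\phi_\epsilon(0) = 1$, a uniform $L^\infty$ bound, and $\phi_\epsilon \to 1$ pointwise, and set $f_\epsilon(x) := x\phi_\epsilon(x)$, also Schwartz with $\widehat{f_\epsilon} \in C_c^\infty$. By Theorem~\ref{SCF&D1}, $\phi_\epsilon(D_L^E)$ and $f_\epsilon(D_L^E) = (D_L^E)\phi_\epsilon(D_L^E)$ lie in $\Gam_s(F,E)$. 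Because $A(D_L^E)^+ \in \Gam_s$ and $\phi_\epsilon(D_L^E)$ super-commutes with $D_L^E$, the composition $A_\epsilon := A\,\phi_\epsilon(D_L^E)$ is also in $\Gam_s$. Applying Theorem~\ref{trprop} to the odd pair $A_\epsilon, f_\epsilon(D_L^E) \in \Gam_s$ yields the graded-cyclic identity $\htr_s\bigl(A_\epsilon \cdot f_\epsilon(D_L^E)\bigr) = -\htr_s\bigl(f_\epsilon(D_L^E) \cdot A_\epsilon\bigr)$ in $\cA^0_c(M/F)$, and similarly on $M'$; hence the $\varphi$-relative pairing of the super anti-commutator $\{A_\epsilon, f_\epsilon(D_L^E)\}$ against $(\Lam, \Lam')$ vanishes for every $\epsilon > 0$.

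Finally, pass to the limit $\epsilon \to 0$. By Propositions~\ref{ptwseconv} and~\ref{bddedconv}, the Schwartz kernels of $A_\epsilon f_\epsilon(D_L^E)$ and $f_\epsilon(D_L^E) A_\epsilon$ converge pointwise, with uniform bounds, to those of $A(D_L^E)^+$ and $(D_L^E)^+ A$ (and analogously on $M'$). The compact Fourier support of $\phi_\epsilon$ preserves finite propagation, and together with the $\varphi$-compatibility of $A(D_L^E)^+$ and $(D_L^E)^+ A$ this ensures that the approximants remain $\varphi$-compatible uniformly in $\epsilon$ off $\cK$ and $\cK'$. Interchanging $\lim_{\epsilon \to 0}$ with $\lim_{s \to \infty}$ and the pairing against $(\Lam, \Lam')$ yields the desired cyclicity, hence the independence of $\Ind_{\Lam, \Lam'}(D_L^E, D_L^{E'})$ of the choice of parametrix. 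The principal obstacle is precisely this last interchange of limits: the kernels $k_{A_\epsilon f_\epsilon(D_L^E)}$ and $k_{f_\epsilon(D_L^E) A_\epsilon}$ are not uniformly compactly supported in $\epsilon$, so one must control their decay at infinity uniformly in $\epsilon$ and propagate $\varphi$-compatibility through the Schwartz-functional-calculus regularization—delivered by bounded geometry together with the finite-propagation estimate inherited from $\widehat{\phi_\epsilon} \in C_c^\infty$.
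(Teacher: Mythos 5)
Your approach is a genuine alternative to the paper's. The paper proves the proposition by building the linear homotopy $Q_t = (1-t)Q_0 + tQ_1$ of parametrices, packaging the remainders into a path of idempotents $e_t$, and computing $\frac{d}{dt}\htr(e_t) = 2\htr(e_t\dot e_t e_t) = 0$; the $\varphi$-compatibility of everything in sight then makes the relative pairing $t$-independent. You instead reduce directly to a graded-cyclicity identity for the relative Haefliger trace of the anti-commutator $\{A, D_L^E\}$ with $A = Q_1 - Q_2$, regularize via $\phi_\epsilon(D_L^E)$, invoke Theorem~\ref{trprop}, and pass to the limit. This is essentially the same mollifier technique the paper itself deploys in the Appendix to establish Remark~\ref{nonsquare}, and it can be made to work; it is also more elementary in the sense that it avoids the idempotent bookkeeping. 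The algebraic identities you use ($S_1 - S_2 = -A(D_L^E)^+$, $R_1 - R_2 = -(D_L^E)^+A$, and the fact that these cancelations land the products in $\Gam_s$) are correct, and the sign in your reduction to $\htr_s(\{A, D_L^E\})$ matches.

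There are, however, gaps in the execution. First, your justification that $A_\epsilon = A\phi_\epsilon(D_L^E) \in \Gam_s(F,E)$ — from $A(D_L^E)^+ \in \Gam_s$ plus super-commutativity of $\phi_\epsilon(D_L^E)$ with $D_L^E$ — does not follow: you cannot peel off $A$ from $A(D_L^E)^+$. The assertion is true, but for a different reason: $A$ is a bounded, finite-propagation leafwise operator (a difference of parametrices), and composing such an operator with a kernel in $\Gam_s$ preserves $\Gam_s$, which is precisely what the Appendix implicitly uses when it asserts $J_\epsilon Q \in \Gam_s$. Second, your limit passage cites Propositions~\ref{ptwseconv} and~\ref{bddedconv}, but those control kernels of pure functional-calculus operators $g(D_L^E)$, not compositions $\phi_\epsilon(D_L^E) K \phi_\epsilon(D_L^E)$ with $K \in \Gam_s$; what you actually need is a (two-sided) version of Corollary~\ref{Mollifying} from the Appendix, applied to $K = AD_L^E$ and $K = D_L^E A$. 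Third, the interchange of $\lim_{\epsilon\to 0}$ with the $s\to\infty$ limit, which you flag as the "principal obstacle," is in fact resolvable and ought to be closed: since $\widehat{\phi_\epsilon}$ can be chosen with support shrinking (e.g.\ $\phi_\epsilon(x)=\phi(\epsilon x)$), the propagation of $\phi_\epsilon(D_L^E)$ is bounded uniformly in $\epsilon$, so the stabilization parameter $s_1$ in the $\varphi$-relative pairing can be chosen independent of $\epsilon$, reducing the interchange to dominated/uniform convergence over a fixed compact transversal. With those three repairs the proof goes through; as written it has genuine holes in the regularization step.
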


\begin{proof}
By an abuse of notation, we will replace the Schwartz kernels of operators by the operators themselves.  Suppose that $(Q_0,Q_0')$ and $(Q_1,Q_1')$ are  two pairs of parametrices  with $\varphi$ compatible remainders. Denote the remainders for $(D_L^E)^+$ by $S_0, R_0$ and $S_1, R_1$ respectively.   The expression 
$$
\htr((\Id - Q_i(D_L^E)^+)^2) - \htr((\Id - (D_L^E)^+ Q_i)^2) = \htr(S_i^2) - \htr(R_i^2),
$$
can be interpreted as  $\htr(e_i-f)$ for idempotents $e_i, f$ in $\Gam_s (F, E)\oplus (\C\Id_{E^+}\oplus \C\Id_{E^-})$ such that $e_i-f\in \Gam_s (F, E)$. More precisely, set
$$
e_i \,\, = \,\, \left(\begin{array}{cc}  S_i^2 &  Q_i (R_i+R_i^2) \\ R_i (D_L^E)^+ & \Id_{E^-} -R_i^2\end{array}\right) \text{ and }f \,\, = \,\,\left(\begin{array}{cc} 0 & 0 \\ 0 & \Id_{E^-} \end{array}\right)
$$
Notice that the Haefliger trace $\htr$ extends to a trace  $\widetilde\htr$ on 
$$
\{k+\left(\begin{array}{cc}\lambda_+ \Id_{E^+} & 0 \\ 0 & \lambda_-\Id_{E^-}\end{array}\right) \vert \; k\in \Gam_s (F, E)\text{ and }\lambda_\pm\in \C\},
$$ 
which is defined to be zero  on $\C\Id_{E^+} \oplus \, \C\Id_{E^-}$.   We thus have
$$
\htr(S_i^2) - \htr(R_i^2) =  \widetilde\htr(e_i).
$$

For  $t\in [0, 1]$, set
$$
Q_t = (1-t)Q_0 + tQ_1,
$$
which is  a one parameter family of  leafwise parametrices  from $Q_0$ to $Q_1$,  with  remainders $S_t=(1-t)S_0+tS_1$ and $R_t=(1-t)R_0+tR_1$.  Then  
$$
e_t \,\, = \,\,\left(\begin{array}{cc} S_t ^2&  Q_t (R_t+R_t^2) \\ R_t (D_L^E)^+ & \Id_{E^-} - R_t^2\end{array}\right)
$$
is a family of idempotents such that 
$$
\htr(S_t^2) - \htr(R_t^2) = \htr(e_t - f)=\widetilde\htr(e_t).
$$
Taking the derivative with respect to $t$, we get
$$
\frac{d}{dt} \left(\htr(S_t^2) - \htr(R_t^2) \right) = \widetilde\htr(\dot{e}_t) = \htr(\dot{e}_t) = \htr(e_t \dot{e}_t + \dot{e}_t e_t)= 2 \htr(e_t \dot{e}_t e_t).
$$
Since $e_t$ is an idempotent,  $e_t \dot{e}_t e_t=0$ for any $t$, and so $\htr(S_t^2) - \htr(R_t^2)$ is independent of $t$, that is, $\Ind (D_L^E)$ does not depend on the choice of $Q$.  

The same argument shows that $\Ind (D_L^{E'})$ does not depend on the choice of $Q'$.
An important point to note is that all of the elements in the argument for  $\Ind (D_L^{E})$ are $\varphi$ compatible with those in the argument for  $\Ind (D_L^{E'})$. This implies that the element 
$$
(\htr(S_t^2) - \htr(R_t^2), \htr({S'}_t^2) - \htr({R'}_t^2))  \; = \;[(\tr(S_t^2) - \tr(R_t^2), \tr({S'}_t^2) - \tr({R'}_t^2))] \in \maA_c^0 (M/F, M'/F'; \varphi)
$$
is independent of $t$, that is does not depend on the choice of $(Q,Q')$.  

\medskip

The function of $s$ whose limit defines $\Ind_{\Lam, \Lam'}(D_L^E, D_L^{E'})$,  is  constant for all  $s \geq s_1 \geq  s_0$, for large enough $s_1$, so the limit exists. Indeed, 
$$
k_{S^2}(x, x) = k_{{S'}^2} ( \varphi (x), \varphi (x)) \quad \text{and} \quad k_{R^2}(x, x) = k_{{R'}^2}(\varphi (x), \varphi (x)),
$$
outside $M(s_1)$, for large enough $s_1$, so 
$$
\Ind_{\Lam, \Lam'}(D_L^E, D_L^{E'}) \,\, = \, \int_{M \ssm M(s)}  \hspace{-1.0cm} k_{S^2} \lan x,x \ran  - k_{R^2} \lan x,x \ran \, d\mu  \,\, - \,\, 
\int_{M' \ssm M'(s)}    \hspace{-1.2cm}  k_{{S'}^2} \lan x,x \ran   - k_{{R'}^2} \lan x,x \ran \, d\mu',
$$
for any $s \geq s_1$. 

Finally, 
$$
\Ind_{\Lam, \Lam'}(D_L^E, D_L^{E'}) \,\, = \, 
\lim_{s \to \infty} \left(\int_{M \ssm M(s)}  \hspace{-1.0cm} k_{S^2} \lan x,x \ran  - k_{R^2} \lan x,x \ran \, d\mu  \,\, - \,\, 
\int_{M' \ssm M'(s)}    \hspace{-1.2cm}  k_{{S'}^2} \lan x,x \ran   - k_{{R'}^2} \lan x,x \ran \, d\mu'  \right)  \,\, = \,\,
$$
$$ 
\int_{T \ssm T_{s_1}}  \int_F  k_{S^2} \lan x,x \ran  - k_{R^2} \lan x,x \ran \, dx_F d\Lam \,\, - \,\,
\int_{T' \ssm T_{s_1}'} \int_{F'}  k_{{S'}^2} \lan x,x \ran   - k_{{R'}^2} \lan x,x \ran \, dx_{F'}d\Lam'  \,\, = \,\,
$$
$$ 
\int_{T \ssm  T_{s_1}} \hspace{-0.5cm}  \tr( k_{S^2})  - \tr(k_{R^2}) \, d\Lam\,\, - \,\, 
\int_{T' \ssm T_{s_1'}}  \hspace{-0.6cm}  \tr(k_{{S'}^2})  - \tr(k_{{R'}^2}) \, d\Lam'    \,\, = \,\, 
$$
$$ 
\lan    [(\tr(S^2) - \tr(R^2), \tr({S'}^2) - \tr({R'}^2))] , (\Lam,\Lam') \ran,
$$
which is independent of the choice of $(Q,Q')$.
\end{proof}

\medskip

Recall, Section \ref{LDOs}, that $\AS (D_L^E)$ is the Atiyah-Singer characteristic differential form for $D_L^E$, and similarly for $\AS (D_L^{E'})$.  By Theorem \ref{zerolim}, the  Schwartz kernel $k_{e^{-t(D_L^E)^2}}$ of the leafwise heat operator $e^{-t(D_L^E)^2}$ and  the leafwise characteristic form $\AS (D_L^E)_L$ satisfy
$$
\lim_{t\to 0}  \tr_s \left(k_{e^{-t(D_L^E)^2}} (x, x)\right) \,\, = \,\, \AS (D_L^E)_L(x),
$$
uniformly on $M$. 
Since 
$$
\AS (D_L ^E)_L \,|_{M \ssm \cK} = \varphi^*(\AS (D_L ^{E'})_L \, |_{M' \ssm \cK'}),
$$
the pair $(\AS (D_L^E)_L, \AS (D_L^{E'})_L$ satisfies
$$
\left[  \left( \int_F \AS (D_L ^E)_L, \int_{F'} \AS (D_L^{E'})_L \right)\right]  \,\, \in \,\,  \maA_c^0 (M/F, M'/F'; \varphi). 
$$ 

\medskip

The theorem below is the first half of the foliation relative measured index theorem.  It and Theorem \ref{mainSection4} comprise a generalization of Theorem 4.18 of \cite{GL3}, see also \cite{LM}, IV, Theorem 6.5. 
\begin{theorem}\label{Firstmain} 
The following index formula holds:  
$$
\Ind_{\Lam, \Lam'}(D_L^E, D_L^{E'}) \,\, = \,\,
\int_{\cK} \AS (D_L ^E)_L  \, d\Lam \,\, - \,\,   \int_{\cK'} \AS (D_L ^{E'})_L \, d\Lam'.
$$
\end{theorem}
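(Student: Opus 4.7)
The strategy has four parts: (a) re-express the relative index as the integral of the super-trace of a bounded-propagation parametrix built from a Schwartz function $\phi$ with $\phi(0)=1$ and compactly supported Fourier transform; (b) deform the resulting operator $\phi(D)^4$ into the leafwise heat operator $e^{-tD^2}$ using a super-commutator argument derived from Theorem \ref{trprop}; (c) take the small-time limit and invoke Theorem \ref{zerolim}; (d) use the $\varphi$-compatibility of the Atiyah--Singer forms and the transverse measures to reduce the domain of integration to $\cK$ and $\cK'$.

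For step (a), pick $\phi$ even Schwartz with $\phi(0)=1$ and $\what{\phi}$ compactly supported. Then $G(y):= (1-\phi(\sqrt{y})^2)/y$ extends smoothly through $0$. Define $Q$ to vanish on $E^+$ and to equal $D^{-}G(D^+D^{-})$ on $E^{-}$. The intertwining $D^{-}h(D^+D^{-}) = h(D^{-}D^+)D^{-}$ yields $S=\Id - QD^+ = \phi(D|_{E^+})^2$ and $R = \Id - D^+Q = \phi(D|_{E^{-}})^2$, so $S^2\oplus R^2 = \phi(D)^4$. Theorem \ref{SCF&D1} places $k_{S^2}, k_{R^2} \in \Gam_s(F,E)$, and finite propagation speed bounds their propagation. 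The analogous construction on $M'$ yields $\varphi$-compatible remainders. Proposition \ref{indepQ} then gives, for $s_1$ large,
$$
\Ind_{\Lam,\Lam'}(D_L^E, D_L^{E'}) \; = \; \int_{M\ssm M(s_1)} \tr_s(k_{\phi(D)^4}(x,x))\, d\mu \; - \; \int_{M'\ssm M'(s_1)} \tr_s(k_{\phi(D')^4}(x,x))\, d\mu'.
$$

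For step (b), interpolate by $f_u(x) = (1-u)\phi^4(x) + u\, e^{-tx^2}$, $u\in[0,1]$, $t>0$ fixed. Each $f_u$ is Schwartz, even, with $f_u(0)=1$. Since $\dot{f}_u(x) = e^{-tx^2} - \phi^4(x)$ is even and vanishes at $x=0$, write $\dot{f}_u(x) = x^2 h_t(x)$ for an even Schwartz $h_t$, so $\dot{f}_u(D) = D^2 h_t(D) = \tfrac12\bigl(D \cdot Dh_t(D) + Dh_t(D)\cdot D\bigr)$ since $h_t(D)$ commutes with $D$. Approximate $h_t$ by Schwartz $h_t^{(n)}$ with $\what{h_t^{(n)}}$ compactly supported; Theorem \ref{SCF&D1} places $Dh_t^{(n)}(D)$ in $\Gam_s(F,E)$. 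The super-trace identity of Theorem \ref{trprop}, extended to commutators with the differential operator $D$ via leafwise integration by parts, yields $\htr_s\bigl([D, Dh_t^{(n)}(D)]_+\bigr) = 0$ in $\maA^0_c(M/F)$. Using the $\varphi$-compatibility of the approximations and invariance of $\Lam, \Lam'$, the $\Lam$-paired relative version of this vanishes for $s \geq s_1$, by the boundary-cancellation argument of Proposition \ref{indepQ}. Passing to the limit $n\to\infty$ via Proposition \ref{bddedconv} and integrating in $u$ from $0$ to $1$ gives
$$
\int_{M\ssm M(s_1)} \tr_s(k_{\phi(D)^4})\, d\mu - \int_{M'\ssm M'(s_1)} \tr_s(k_{\phi(D')^4})\, d\mu' \; = \; \int_{M\ssm M(s_1)} \tr_s(k_{e^{-tD^2}})\, d\mu - \int_{M'\ssm M'(s_1)} \tr_s(k_{e^{-tD'^2}})\, d\mu'.
$$

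For steps (c) and (d), let $t \to 0$. Theorem \ref{zerolim} gives uniform convergence $\tr_s(k_{e^{-tD^2}}(x,x)) \to \AS(D_L^E)_L(x)$ on $M$. Compactness of $M\ssm M(s_1)$, finiteness of $\mu$ there, and dominated convergence produce
$$
\Ind_{\Lam,\Lam'}(D_L^E, D_L^{E'}) \; = \; \int_{M\ssm M(s_1)} \AS(D_L^E)_L\, d\Lam - \int_{M'\ssm M'(s_1)} \AS(D_L^{E'})_L\, d\Lam'.
$$
By hypothesis $\AS(D_L^E)_L|_V = \varphi^*\AS(D_L^{E'})_L|_{V'}$, and the transverse measures are $\varphi$-compatible on $V, V'$. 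The integrals over $V \cap (M\ssm M(s_1))$ thus cancel their $\varphi$-images, and what remains is the integral over $\cK$ minus that over $\cK'$, proving the formula.

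The main obstacle is the deformation in step (b). Theorem \ref{trprop} is stated for pairs in $\Gam_s(F,E)$, but $D$ is a differential operator outside that class, so the super-commutator trace identity must be extended to $[D,A]$ with $A\in \Gam_s(F,E)$ via a leafwise integration-by-parts. Translating this Haefliger-level vanishing into the truncated $\Lam$-paired relative statement requires invoking the $\varphi$-cancellation of boundary contributions exactly as in Proposition \ref{indepQ}, then controlling the limit $n\to \infty$ for the approximations $h_t^{(n)}$ uniformly on the compact region $M\ssm M(s_1)$.
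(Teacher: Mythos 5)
Your step (a) is sound and parallels the paper's start, but step (b) contains a genuine error that the paper's proof was specifically designed to avoid.

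The identity you claim at the end of step (b),
$$
\int_{M\ssm M(s_1)} \tr_s(k_{\phi(D)^4})\, d\mu - \int_{M'\ssm M'(s_1)} \tr_s(k_{\phi(D')^4})\, d\mu' \; = \; \int_{M\ssm M(s_1)} \tr_s(k_{e^{-tD^2}})\, d\mu - \int_{M'\ssm M'(s_1)} \tr_s(k_{e^{-tD'^2}})\, d\mu',
$$
is false for fixed $s_1$ and $t$. The left side is eventually constant in $s_1$ because $\phi(D)^4$ has finite propagation, so its Schwartz kernel restricted to the diagonal near a point $x$ deep in $V$ depends only on the geometry near $x$ and hence agrees exactly, via $\varphi$, with the primed kernel; the tails out to infinity cancel. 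The right side, by contrast, is \emph{not} constant in $s_1$: $e^{-tD^2}$ has infinite propagation speed, so $\tr_s(k_{e^{-tD^2}}(x,x))$ for $x$ anywhere in $V$ is sensitive to the geometry on all of $M$, including $\cK$, and thus differs from $\varphi^*\tr_s(k_{e^{-tD'^2}})$ at \emph{every} point of $V$, not just near $\cK$. Consequently the pair $(\tr_s(k_{e^{-tD^2}}), \tr_s(k_{e^{-tD'^2}}))$ is not $\varphi$-compatible (its difference is only exponentially small off-compact, not compactly supported) and does not define a class in $\cA_c^0(M/F, M'/F'; \varphi)$; your appeal to ``the boundary-cancellation argument of Proposition \ref{indepQ}'' does not legitimately apply, since that proposition deforms between two finite-propagation, $\varphi$-compatible parametrices, and here $f_1(D) = e^{-tD^2}$ is neither. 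Your approximation by $h_t^{(n)}$ with compactly supported Fourier transform does not save this, because passing to the limit $n\to\infty$ recovers the infinite-propagation operator, and the truncated relative integral of $[D, Dh_t^{(n)}(D)]_s$ need not vanish for finite $s_1$.

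The paper avoids exactly this trap: rather than fixing one parametrix and deforming it to the heat operator at a fixed $t$, it builds a $t$-\emph{parametrized family} of parametrices $Q_t$ whose remainders $\chi^t(D_L^E)$ all have propagation $\leq 1$ (hence lie in $\Gam_s(F,E)$ and are exactly $\varphi$-compatible off compacta for every $t$), so Proposition \ref{indepQ} gives $t$-independence of the relative index throughout. The scale-$\sqrt{t}$ mollification in $\chi^t(z) = \big[\tfrac{1}{\sqrt{t}}\what{\psi}_{1/\sqrt{t}} \star e\big]_{\sqrt{t}}(z)$ is chosen precisely so that $(\chi^t)^2 - e^{-tz^2} \to 0$ in the Schwartz topology as $t\to 0$, and then one compares $\tr_s(k_{\chi^t(D)^2}(x,x))$ and $\tr_s(k_{e^{-tD^2}}(x,x))$ \emph{pointwise} via the Sobolev estimates of Proposition \ref{bddedconv}, never needing a relative supertrace-vanishing along a deformation into the infinite-propagation regime. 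Both pointwise quantities then converge uniformly to $\AS(D_L^E)_L(x)$, and the comparison is made on the fixed, finite-volume region $T\ssm T(s_1)$, giving a purely local limit argument. To repair your proof you would either need to adopt this $t$-parametrized finite-propagation construction, or else carry out a genuine error analysis on the exponentially small off-diagonal discrepancy between the heat kernels of $(M,F)$ and $(M',F')$ and show it vanishes as $s_1 \to\infty$; the latter is substantial additional work and is not accomplished by the super-commutator argument as written.
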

Note that the right hand side equals 
$\dd \lan[( \int_F \AS (D_L ^E)_L, \int_{F'} \AS (D_L^{E'})_L)] , (\Lam, \Lam') \ran$.

\begin{proof}
We begin by constructing parametrices $Q_{t}$ and $Q_{t}'$  with $\varphi$ compatible remainders, which satisfy
$$
 \Id - Q_{t} D_L^E \,\, = \,\,  \Id - D_L^E Q_{t}  \quad \text{and} \quad \Id - Q'_{t} D_L^{E'} = \Id - D_L^{E'} Q'_{t}.
$$
It follows immediately that 
$$
k_{S_{t}^2} \lan x,x \ran   - k_{R_{t}^2} \lan x,x \ran  \,\, = \,\,  \tr_s(k_{( \Id - Q_{t} D_L^E)^2}(x,x)), 
$$
and we will show that as  $t\to 0$, $\tr_s(k_{( \Id - Q_{t} D_L^E)^2}(x,x)$
converges uniformly to $\AS (D_L^E)_L(x)$.  Of course, the same holds for $(\Id - Q'_{t} D_L^{E'})^2$.

For a real function $g$, set $g_\lambda(z) = g(\lambda z)$, for $\lambda \in \R$, denote its Fourier Transform by $\what{g}$ and $FT(g)$, its inverse transform by $\wtit{g}$ and $FT^{-1}(g)$, and  the convolution of $g$ and $h$ by $g \star h$.   We have the following facts:  
$$
FT (g_\lambda) = \frac{1}{\lambda} FT (g)_{ \frac{1}{\lambda}}; \;  FT (g \star h)= \sqrt{2\pi}FT(g) FT(h); \text{  and  } FT(\what{g}) = FT^{-1}(\what{g}) = g, \text{  if $g$ is even}.
$$

Fix  a smooth even non-negative function $\psi$ supported in $[-1, 1]$, which equals $1$ on $[-1/4, 1/4]$, which is non-increasing on $\R_+$, and whose integral over $\R$ is $1$.  Note that $FT( \what{\psi}) = \psi$ since $\psi$ is even.  The family $\frac{1}{\sqrt{t}} \what{\psi}_{\frac{1}{\sqrt{t}}}$ is an approximate identity when acting on a Schwartz function $f$ by convolution, since, up to the constant $\sqrt{2\pi}$ which we systematically ignore,
$$
\frac{1}{\sqrt{t}} \what{\psi}_{\frac{1}{\sqrt{t}}} \star f  \; = \; FT^{-1}(FT( \frac{1}{\sqrt{t}} \what{\psi}_{\frac{1}{\sqrt{t}}} \star f ))  \; = \; FT^{-1} (\psi_{\sqrt{t}}  \what{f}) \to \wtit{\what{f}} \; = \; f,
$$
in the Schwartz topology as $t \to 0$.  In fact more is true.
 \begin{lemma}\label{Schwlimit}
$
\dd \lim_{t\to 0} \left(\left[\frac{1}{\sqrt{t}} \what{\psi}_{\frac{1}{\sqrt{t}}} \star f\right]_{\sqrt{t}} \; -  \;   f_{\sqrt{t}}\right)  \; = \; 0,
$ 
in the Schwartz topology.  
\end{lemma}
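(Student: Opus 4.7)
The plan is to work on the Fourier side, where the family reduces to an explicit multiplier. Since the Fourier transform is a topological isomorphism of $\cS(\R)$, it is enough to show that the Fourier transform of the difference tends to $0$ in the Schwartz topology.

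Writing $h_t = \tfrac{1}{\sqrt t}\,\what{\psi}_{1/\sqrt t}\star f$ and using the three identities collected just before the lemma together with $FT(\what{\psi}) = \psi$ (which holds because $\psi$ is even), a short calculation gives
$$
FT(h_t)(\xi) \;=\; \psi(\sqrt t\,\xi)\,\what{f}(\xi),
$$
and then one more application of $FT(g_\lambda) = \tfrac{1}{\lambda} FT(g)_{1/\lambda}$ yields
$$
FT([h_t]_{\sqrt t})(\xi) \;=\; \tfrac{1}{\sqrt t}\,\psi(\xi)\,\what{f}(\xi/\sqrt t), \qquad FT(f_{\sqrt t})(\xi) \;=\; \tfrac{1}{\sqrt t}\,\what{f}(\xi/\sqrt t).
$$
Hence the Fourier transform of the difference is the explicit family
$$
G_t(\xi) \;:=\; \tfrac{1}{\sqrt t}\,\what{f}(\xi/\sqrt t)\,\bigl(\psi(\xi) - 1\bigr).
$$

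The structural point is that $\psi \equiv 1$ on $[-1/4, 1/4]$, so $G_t$ is supported in $\{|\xi| \ge 1/4\}$; on that set $|\xi/\sqrt t| \ge 1/(4\sqrt t) \to \infty$, and the rapid decay of $\what{f}$ will do the work. To verify $G_t \to 0$ in the Schwartz topology I would bound each seminorm $\sup_\xi |\xi|^n |\partial_\xi^m G_t(\xi)|$. Leibniz produces a finite sum of terms of the form $t^{-(k+1)/2} \what{f}^{(k)}(\xi/\sqrt t)\,(\psi - 1)^{(m-k)}(\xi)$ with $0 \le k \le m$. Applying the Schwartz estimate $|\what{f}^{(k)}(\eta)| \le C_{k,N}(1 + |\eta|)^{-N}$ (for any $N$) on $|\xi| \ge 1/4$ gives $|\what{f}^{(k)}(\xi/\sqrt t)| \le C\, t^{N/2} |\xi|^{-N}$. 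For $k < m$, $(\psi - 1)^{(m-k)} = \psi^{(m-k)}$ is bounded with compact support in $\{1/4 \le |\xi| \le 1\}$; for $k = m$ one uses in addition $|\xi|^{n-N} \le 4^{N-n}$ on $|\xi| \ge 1/4$. In either case the summand is majorized by $C\, t^{(N-k-1)/2}$, and choosing $N > \max(m+1, n)$ makes the full seminorm $O(t^{(N-m-1)/2})$, uniformly in $\xi$, hence $\to 0$ as $t \to 0$.

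Transporting back through the Fourier transform finishes the proof. The content is really just a careful accounting of the $t^{1/2}$ powers generated by the two dilations and by differentiating $\what{f}(\xi/\sqrt t)$; the one (minor) obstacle is ensuring that the gain from the decay of $\what{f}$ on $|\xi| \ge 1/4$ beats the loss $t^{-(k+1)/2}$ for every Schwartz seminorm simultaneously, which is exactly why we can, and must, take $N$ arbitrarily large.
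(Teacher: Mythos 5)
Your proof is correct and follows essentially the same route as the paper's: pass to the Fourier side, note that the difference becomes $\tfrac{1}{\sqrt t}\,\hat f(\xi/\sqrt t)\,(\psi(\xi)-1)$, exploit that $\psi-1$ vanishes on $(-1/4,1/4)$ so the supremum restricts to $|\xi|\ge 1/4$, and then let the rapid decay of $\hat f$ at infinity overwhelm the negative powers of $\sqrt t$. The only cosmetic difference is in bookkeeping: the paper absorbs the $\psi$-derivatives into constants and invokes the abstract fact that $\sup_{|z|\ge\eta/\sqrt t}|g(z)|=o(t^N)$ for Schwartz $g$ and every $N$, while you expand via Leibniz and track powers of $t$ explicitly (distinguishing the $k=m$ term, where $\psi-1$ is not compactly supported, from $k<m$, where $\psi^{(m-k)}$ is). Both bookkeepings are sound and yield the same conclusion.
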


\begin{proof}
We need only prove that the difference of the Fourier transforms goes to zero in the Schwartz topology. But, 
$$
FT \left(\left[\frac{1}{\sqrt{t}} \what{\psi}_{\frac{1}{\sqrt{t}}} \star f\right]_{\sqrt{t}}\right) - FT \left[f_{\sqrt{t}}\right]  =  \frac{1}{\sqrt{t}} FT(f)_{\frac{1}{\sqrt{t}}}   (\psi -1).
$$

Write  $\rho$ for $FT(f)\in \maS(\R)$.  Then, since $\psi -1$ and all its derivatives  are identically $0$ on $(-1/4, 1/4)$ and are bounded over $\R$ with the bound which can depend on the degree of the derivatives (this is 
not a problem),  there are constants $C_k$, depending on non-negative $n,m \in \Z$,  so that
\\

\begin{eqnarray*} 
\vert\vert z^n \frac{\pa^m}{\pa z^m} \left[ \frac{1}{\sqrt{t}} \rho_{\frac{1}{\sqrt{t}}}  (\psi -1)\right] \vert\vert_\infty & = & \sup_{\vert z\vert \geq 1/4} \left\vert z^n \frac{\pa^m}{\pa z^m} \left[ \frac{1}{\sqrt{t}} \rho_{\frac{1}{\sqrt{t}}} (\psi -1)\right] \right\vert \\
& \leq &  \sup_{\vert z\vert \geq 1/4}\left\vert z^n \sum_{k=0}^m C_k 
\frac{\pa^k}{\pa z^k} \left[ \frac{1}{\sqrt{t}} \rho_{\frac{1}{\sqrt{t}}} 
\right] (z) \right\vert \\
& =  &  \sup_{\vert z\vert \geq  1/4} \left\vert z^n \sum_{k=0}^m C_k 
\left(\frac{1}{\sqrt{t}}\right)^{k+1} \rho^{(k)} \left({\frac{z}{\sqrt{t}}}\right) \right\vert \\
& = & \sup_{\vert z\vert \geq \frac{1}{4\sqrt{t}}} \sqrt{t}^n \left\vert z^n \sum_{k=0}^m C_k \left(\frac{1}{\sqrt{t}}\right)^{k+1} \rho^{(k)} (z) \right\vert 
\end{eqnarray*}
For any non-negative $k\in \Z$, the function $z\mapsto z^n \rho^{(k)} (z)$ is Schwartz.   But for any Schwartz function $f$, any $N\geq 0$ and any $\eta >0$,

\noi\hspace{3.0cm}$\dd
\lim_{t\to 0^+} \frac{1}{t^N} \sup_{\vert z\vert \geq \frac{\eta}{\sqrt{t}}} \vert f (z)\vert =0,  \quad \text{thus,}  \quad
\lim_{t \to 0}\vert\vert z^n \frac{\pa^m}{\pa z^m} \left[ \frac{1}{\sqrt{t}} \rho_{\frac{1}{\sqrt{t}}} (\psi -1)\right] \vert\vert_\infty \; = \; 0.
$
\end{proof}

Set  $e(z) = e^{-z^2/2}$, and for $t > 0$, set  
$$
\chi^t(z) \; =  \; \left[\frac{1}{\sqrt{t}} \what{\psi}_{\frac{1}{\sqrt{t}}} \star e  \right]_{\sqrt{t}} (z).
$$
Then, by Lemma \ref{Schwlimit}, 
$$
\lim_{t \to 0}\left(\chi^t(z) \; - \; e^{-tz^2/2}\right) = 0, \;\; \text{ in the Schwartz topology.}
$$

Note that $\chi^t(D_L^E)$ has propagation $\leq 1$.  To see this, since $\what{e} = e$, we have that up to a constant,
$$
FT(\frac{1}{\sqrt{t}} \what{\psi}_{\frac{1}{\sqrt{t}}} \star e)  \; = \;  \psi_{\sqrt{t}} e,
$$
and by Theorem \ref{SCF&D1}, $\chi^t (D_L^E) \in \Gam_s(F,E )$.  In fact, up to a constant,
$$
\chi^t (D_L^E)   \; = \;  FT^{-1}(\psi_{\sqrt{t}} e)(\sqrt{t}D_L^E)   \; = \; \int_\R \psi (\sqrt{t} \xi)  e (\xi) \cos (\xi \sqrt{t} D_L^E) \, d\xi,
$$
since $\psi_{\sqrt{t}} e$ is even.  Setting $\eta = \sqrt{t}\xi $, the fact that $\Supp \psi\subset [-1, 1]$
gives,
$$
\chi^t (D_L^E) = \frac{1}{\sqrt{t}} \int_{\vert \eta\vert \leq 1} \psi (\eta)  e (\eta/\sqrt{t}) \cos (\eta D_L^E) d\eta.
$$
The operator $\cos (\eta D_L^E)$ has propagation $\leq \vert \eta\vert$, see \cite{Ch73, Roe87}.  Thus $\chi^t (D_L^E)$   has propagation $\leq1$.

\medskip

Set  
$$
Q_{t} \,\, = \,\,  \left(\frac{1- (\chi^t (0))^{-1} \chi^t (z)}{ z}\right) (D_L^E) \quad \text{ and }  \quad
Q'_t  \,\, = \,\,  \ \left(\frac{1- (\chi^t (0))^{-1} \chi^t (z)}{ z}\right)(D_L^{E'}).
$$
These are parametrices with $\varphi$ compatible remainders for the operators $D_L^E$ and $D_L^{E'}$. Indeed, 
$$ \Id - D_L^E Q_{t}  \,\, = \,\,  \left(1-  z\left(\frac{1- (\chi^t (0))^{-1} \chi^t (z)}{ z}\right)\right) (D_L^E)  \,\, = \,\, 
$$
$$
 \left(1-  \left(\frac{1- (\chi^t (0))^{-1} \chi^t (z)}{ z}\right)z\right) (D_L^E) \,\, = \,\, \Id - Q_{t} D_L^E  \,\, = \,\,  (\chi^t (0))^{-1} \chi^t (D_L^E)  \,\,  \in \,\, \Gam_ s (F, E).
$$
Note that,  
$
\chi^t(z)= FT(\psi_{\sqrt{t}} e)(\sqrt{t}z), 
$
as $\wtit{e} = \what{e} = e$ and $\psi_{\sqrt{t}} e$ is even.  Thus
$$
0 \,\, < \,\,  \chi^t(0) \; = \;  FT(\psi_{\sqrt{t}}\,e)(\sqrt{t} \cdot0)    \; = \;  
\frac{1}{\sqrt{2\pi}} \int_{\R}    \psi(\sqrt{t}\xi ) e^{-\xi^2/2} \, d\xi  \; \leq \;  
\frac{1}{\sqrt{2\pi}} \int_{\R}  e^{-\xi^2/2} \, d\xi  \; = \;  1.
$$
In addition, 
$$
-\lim_{z \to 0} \frac{1 - (\chi^t (0))^{-1} \chi^t (z)}{z} \,\, = \,\, (\chi^t (0))^{-1}  \lim_{z \to 0} \frac{ d\chi^t }{dz} (x) \,\, = \,\, 0,
$$
since $\dd \frac{d\chi^t}{dz}$ is odd.  For simplicity of notation, we will ignore $(\chi^t (0))^{-1}$ in what follows.  Thus 
$$
k_{S_{t}^2} \lan x,x \ran   - k_{R_{t}^2} \lan x,x \ran  \,\, = \,\,  \tr_s(k_{( \Id - Q_{t} D_L^E)^2}(x,x)) \,\, = \,\,   \tr_s(k_{\chi^t (D_L^E)^2}(x,x))  , 
$$ as claimed, and similarly for $Q'_{t}$.

Since  $\chi^t (D_L^E)$ and $\chi^t (D_L^{E'})$ have propagation $\leq 1$,   
$k_{\chi^t (D_L^E)^2 }(x,x)$  and $k_{\chi^t (D_L^{E'})^2 }(x,x)$ are  completely determined by what $(D_L^E)^2$ and $(D_L^{E'})^2$ are within a   distance $1$ of $x$.  Since $(D_L^E)^2$ and $(D_L^{E'})^2$ are  $\varphi$ related off $\cK$ and $\cK'$, the pair $\tr_s (\chi^t (D_L^E)^2) , \tr_s(\chi^t (D_L^{E'})^2)$  is $\varphi$ related off  the $1$  penumbras of $\cK$ and $\cK'$.   
Thus,
$$
\left[ \left( \tr_s (\chi^t ( D_L^E)^2 ), \tr_s(\chi^t ( D_L^{E'})^2 )\right)\right] \,\, \in \,\,
\maA_c^0 (M/F, M'/F'; \varphi), 
$$
and, we may  express the measured relative index as
$$
 \Ind_{\Lambda, \Lambda'}(D_L^E, D_L^{E'}) \,\, = \,\,    \left\lan \left[ \left( \tr_s (\chi^t ( D_L^E)^2 ), \tr_s(\chi^t ( D_L^{E'})^2 )\right)\right], (\Lam, \Lam') \right\ran. 
$$
The right hand side is independent of $t$, because of its independence of the choice of the  pair $(Q_t,Q'_t)$ by Proposition \ref{indepQ}.

Now, 
\vspace{-0.2cm}
$$
\left\lan \left[ \left( \tr_s (\chi^t ( D_L^E)^2 ), \tr_s(\chi^t ( D_L^{E'})^2 )\right)\right], (\Lam, \Lam') \right\ran \,\, = \,\,
$$
\vspace{-0.6cm}
\begin{Equation} \label{limsinf} \end{Equation}
\vspace{-0.7cm}
$$ 
\lim_{s \to \infty} \left(\int_{T \ssm T(s)}  \hspace{-0.5cm} \tr_s(\chi^t ( D_L^E)^2) \, d\Lam  \,\, - \,\, 
\int_{T' \ssm T'(s)}    \hspace{-0.7cm}  \tr_s(\chi^t ( D_L^{E'})^2) \, d\Lam'  \right) \; = \;
$$
$$ 
\int_{T \ssm T(s_1)}  \hspace{-0.5cm} \tr_s (\chi^t ( D_L^E)^2) \, d\Lam  \,\, - \,\, 
\int_{T' \ssm T'(s_1)}    \hspace{-0.7cm}  \tr_s(\chi^t ( D_L^{E'})^2) \, d\Lam',
$$
for $s_1$ sufficiently large.  The same $s_1$ works for all $t > 0$.  Note that $T \ssm T(s_1)$ and $T' \ssm T'(s_1)$ are relatively compact, so have finite volumes.

\smallskip

Let $\delta > 0$ be given.  There is $t_0 > 0$, so that for all $0 < t <  t_0$,  and all $x$,
$$
| \tr_s(k_{\chi^t(D_L^E)^2}(x,x)) -  \tr_s(k_{e^{-t(D_L^E)^2}}(x,x))| \; \leq \; \delta/2.
$$
This follows from Proposition \ref{bddedconv} and Lemma \ref{Schwlimit} and their proofs.  These give 
$$
| \lan k_{\chi^t(D_L^E)^2}(x,x) (v), v \ran  -  \lan k_{e^{-t(D_L^E)^2}}(x,x) (v), v \ran | \; \leq \; 
$$
$$
||\chi^t(D_L^E)^2 - e^{-t(D_L^E)^2})||_{\ell,-\ell}   ||\delta_x^v||_{-\ell} ||\delta_x^v||_{-\ell}  \; \leq \; 
$$
$$
\sup_{z \in \R}(|(1 + z^2)^{\ell}(\chi^t(z)^2 - e^{-tz^2}| )  ||\delta_x^v||_{-\ell}^2 \; \to 0 \text{ as  } t \to 0,
$$
independently of $\delta_x^v$.  Finally, $\tr_s(k_{\chi^t(D_L^E)^2}(x,x))$ is a finite sum of elements of the form $k_{\chi^t(D_L^E)^2}(\delta_x^v,\delta_x^v)$, as is $\tr_s(k_{e^{-t(D_L^E)^2}}(x,x))$.

From Theorem \ref{zerolim} we have, 
$$
\lim_{t \to 0} \tr_s(k_{e^{-t(D_L^E)^2}}(x,x)) \; = \; AS((D_L^E)^2)_L(x) \quad \text{uniformly}.
$$
Choose $t \in (0,t_0)$ so that 
$$
| \tr_s(k_{e^{-t(D_L^E)^2}}(x,x)) \; - \; AS((D_L^E)^2)_L(x)| \; \leq \: \delta/2.
$$ 
Then for all $x$, 
$$
| \tr_s(k_{\chi^t(D_L^E)^2}(x,x)) \; - \; AS((D_L^E)^2)_L (x) |\; \leq \: \delta.
$$
We may assume that the same holds for 
$$
|\tr_s(k_{\chi^t(D_L^{E'})^2}(x,x)) \; - \; AS((D_L^{E'})^2)_L(x).
$$
Thus
$$ 
\int_{T \ssm T(s_1)}  \hspace{-0.5cm} \tr_s \left(\chi_t (t D_L^E)^2\right) \, d\Lam  \,\, - \,\, 
\int_{T' \ssm T'(s_1)}    \hspace{-0.7cm}  \tr_s\left(\chi_t (t D_L^{E'})^2\right) \, d\Lam'
$$
differs from 
$$ 
\int_{T \ssm T(s_1)}  \hspace{-0.5cm}AS((D_L^E)^2)_L \, d\Lam  \,\, - \,\, 
\int_{T' \ssm T'(s_1)}    \hspace{-0.7cm}  AS((D_L^{E'})^2)_L \, d\Lam'
$$
by at most $\delta(\vol(T \ssm T(s_1))  + \vol(T' \ssm T'(s_1)))$, where $\delta$ is as small as we please, so they must be equal.  \end{proof} 

Denote by $\chi_{[a,b]}$ the characteristic function of the interval $[a,b]$.
For $\ep \geq 0$, denote by $P_{[0,\ep]}$ the  leafwise spectral projection $\chi_{[0,\ep]}((D_L^E)^2) $ of $(D_L^E)^2$, with leafwise Schwartz kernel $k_{P_{[0,\ep]}}$.
\begin{defi} The $\Lam$ dimension of $P_{[0,\ep]}$ is 
$$
\Dim_{\Lam}(\Im(P_{[0,\ep]}))  \,\, = \,\, \int_M k_{P_{[0,\ep]}} \lan x,x \ran\, d\mu  \,\, = \,\, 
\int_T \left[ \int_F  k_{P_{[0,\ep]}} \lan x,x \ran \, dx_F\right] d\Lam.
$$
In particular, 
$$
\Dim_{\Lam}(\Ker((D_L^E)^2)) \,\, = \,\, \Dim_{\Lam}(\Im(P_0))  \,\, = \,\, \int_M k_{P_0} \lan x,x \ran \, d\mu  \,\, = \,\, \int_T \left[ \int_F k_{P_0} \lan x,x \ran \,dx_F\right] d\Lam.
$$
\end{defi}

Since $P_{[0,\ep]}$ is a projection, it is automatically a positive operator.  It is a standard result that the function  $k_{P_{[0,\ep]}} \lan x,x \ran$ is non-negative and leafwise smooth.  

\begin{lemma}\label{MeasurableP}
 $P_{[0,\ep]}$  is transversely measureable.
\end{lemma}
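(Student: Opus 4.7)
The plan is to exhibit $k_{P_{[0,\ep]}}$ as a pointwise limit, on $M\times M$, of kernels of transversely measurable operators; since pointwise limits of Borel-measurable functions are Borel-measurable, this will suffice. Via the Borel functional calculus one has $\chi_{[0,\ep]}((D_L^E)^2) = \chi_{[-\sqrt{\ep},\sqrt{\ep}]}(D_L^E)$, so I will approximate $\chi_{[-\sqrt{\ep},\sqrt{\ep}]}$ pointwise by a sequence of Schwartz functions whose Fourier transforms lie in $C_c^\infty(\R)$ -- a class to which Theorem \ref{SCF&D1} applies -- and then conclude via Proposition \ref{ptwseconv}.

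First I would choose smooth functions $f_n\colon \R\to[0,1]$ with $f_n\equiv 1$ on $[-\sqrt{\ep},\sqrt{\ep}]$ and $\Supp f_n\subseteq[-\sqrt{\ep}-\tfrac{1}{n},\sqrt{\ep}+\tfrac{1}{n}]$, so that $f_n\to\chi_{[-\sqrt{\ep},\sqrt{\ep}]}$ pointwise on all of $\R$ (including at the endpoints, where $f_n\equiv 1$). Next, fix an even $\rho\in C_c^\infty(\R)$ with $\rho(0)=1$, set $\check\rho=FT^{-1}(\rho)$, and define $h_m(z)=m\check\rho(mz)$; then $\{h_m\}$ is a Schwartz approximate identity, and each $FT(h_m)=\rho(\cdot/m)$ has compact support. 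The convolution $g_{n,m}=f_n*h_m$ is a Schwartz function with $FT(g_{n,m})=FT(f_n)\cdot\rho(\cdot/m)\in C_c^\infty(\R)$. Choosing $m=m(n)$ large enough that $\|g_{n,m(n)}-f_n\|_\infty<1/n$ will give $g_{n,m(n)}\to\chi_{[-\sqrt{\ep},\sqrt{\ep}]}$ pointwise on $\R$. Young's inequality, combined with the Schwartz decay of $\check\rho$ applied to the integral representation of $g_{n,m}$ for $|z|$ large, should yield a uniform weighted bound $\|(1+z^2)^{\ell/2}g_{n,m(n)}\|_\infty\le C_\ell$ for each fixed $\ell\ge 0$, independent of $n$.

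With these ingredients in hand, the argument closes as follows. By Theorem \ref{SCF&D1}, each $g_{n,m(n)}(D_L^E)$ has Schwartz kernel in $\Gam_s(F,E)$, and is therefore transversely measurable. By Proposition \ref{ptwseconv}, applied with $\ell$ sufficiently large, $k_{g_{n,m(n)}(D_L^E)}(x,y)\to k_{\chi_{[-\sqrt{\ep},\sqrt{\ep}]}(D_L^E)}(x,y)=k_{P_{[0,\ep]}}(x,y)$ pointwise on $M\times M$. Pointwise limits of Borel-measurable sections being Borel-measurable, this yields that $k_{P_{[0,\ep]}}$, and hence $P_{[0,\ep]}$, is transversely measurable.

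The main technical obstacle is the tension between two conflicting demands on the approximating functions $g_n$: to approximate $\chi_{[-\sqrt{\ep},\sqrt{\ep}]}$ pointwise one wants them essentially compactly supported in $z$, while to invoke Theorem \ref{SCF&D1} one wants their Fourier transforms compactly supported, and the uncertainty principle forbids both simultaneously. The two-step construction (cutoff in $z$, then mollification by a band-limited kernel) resolves this, but at the cost of verifying the uniform weighted bound $\|(1+z^2)^{\ell/2}g_{n,m(n)}\|_\infty\le C_\ell$, which requires separating the estimate into a region of bounded $|z|$ (where Young's inequality gives $\|g_{n,m}\|_\infty\le\|\check\rho\|_1$) and a far-field region (where one exploits the Schwartz decay $|\check\rho(u)|\le C_k(1+|u|)^{-k}$ to absorb the polynomial weight $(1+z^2)^{\ell/2}$).
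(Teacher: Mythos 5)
Your proposal is correct and uses essentially the same ingredients as the paper's proof: reduce to $\chi_{[-\sqrt\ep,\sqrt\ep]}(D_L^E)$, approximate by compactly supported smooth cutoffs mollified against band-limited kernels (so that Theorem \ref{SCF&D1} puts the resulting operators in $\Gam_s(F,E)$), verify the weighted bounds, and conclude by Proposition \ref{ptwseconv}. The only variation is organizational: you diagonalize to a single sequence $g_{n,m(n)}$ and so bypass the paper's intermediate two-step limit (which uses Proposition \ref{bddedconv} to first show each $\rho_n(D_L^E)$ has a measurable kernel before letting $n\to\infty$); the uniform weighted bound you invoke is exactly what the paper establishes via Peetre's inequality.
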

\begin{proof}
First note that  $\chi_{[0,\ep]}((D_L^E)^2) = \chi_{[-\sqrt{\ep}, \sqrt{\ep}]}(D_L^E)$.
Next, recall the approximate identity $\frac{1}{\sqrt{t}} \what{\psi}_{\frac{1}{\sqrt{t}}}$ from the proof of Lemma \ref{Schwlimit}, where $\psi$ is a smooth even non-negative function  supported in $[-1, 1]$, which equals $1$ on $[-1/4, 1/4]$, which is non-increasing on $\R_+$, and whose integral over $\R$ is $1$.  Then 
$\frac{1}{\sqrt{t}} \what{\psi}_{\frac{1}{\sqrt{t}}} = FT(\psi_{\sqrt{t}})$, so $\what{\psi}(x)$ is Schwartz, and 
$$
\int (1+y^2)^\ell \frac{1}{\sqrt{t}} \what{\psi}_{\frac{1}{\sqrt{t}}}(y)\, dy
$$ 
is uniformly bounded independently of $t$. Indeed, we may assume $0<t \leq 1$, then setting $y=\sqrt{t}x$ we get
$$
\int (1+y^2)^\ell \frac{1}{\sqrt{t}} \what{\psi}_{\frac{1}{\sqrt{t}}}(y)\, dy   \leq 
\int  (1+y^2)^\ell\vert \frac{1}{\sqrt{t}} \what{\psi}_{\frac{1}{\sqrt{t}}}(y)\vert \, dy  = 
\int  (1+tx^2)^\ell \vert \what{\psi}(x)\vert\, dx \leq \int  (1+x^2)^\ell\vert\what{\psi}(x)\vert \, dx  <  +\infty,
$$
since $\what{\psi}(x)$ is Schwartz.

Let $\rho_n$ be a sequence of smooth compactly supported non-negative even functions taking values  in $[0, 1]$.  We require that $\rho_n$ be supported in $\vert y\vert\leq \sqrt{\ep}+1/n$, be  equal to $1$ on $\vert y\vert 
\leq \sqrt{\ep}$,  and converges pointwise to $\chi_{[-\sqrt{\ep}, +\sqrt{\ep}]}$ as $n \to \infty$. 
For $\ell \geq 0$,  $0 \leq (1+y^2)^\ell  \rho_n (y) \leq (1+(\sqrt{\ep}+1)^2)^\ell$.  
Peetre's inequality says that for all $y$, $z$, and $\ell$,
$$
(1+z^2)^\ell \; \leq \; 2^{|\ell|} (1+(z-y)^2)^{|\ell|}(1+y^2)^\ell. 
$$ 
Thus, for $\ell \geq 0$, the family 
$$
(1+z^2)^\ell \vert ( \frac{1}{\sqrt{t}} \what{\psi}_{\frac{1}{\sqrt{t}}} * \rho_n) (z)\vert \; = \; 
|  \int_{\R}(1+z^2)^\ell  \frac{1}{\sqrt{t}} \what{\psi}_{\frac{1}{\sqrt{t}}}(z-y)  \rho_n(y) \, dy |  \; \leq \;  
$$ 
$$
\vert  \int_{\R}  2^\ell (1+(z-y)^2)^\ell(1+y^2)^\ell \frac{1}{\sqrt{t}} \what{\psi}_{\frac{1}{\sqrt{t}}}(z-y)  \rho_n(y) \, dy  \vert  \; \leq \;  
$$
$$
\vert \int_{\R}  2^\ell (1+(z-y)^2)^\ell   \frac{1}{\sqrt{t}} \what{\psi}_{\frac{1}{\sqrt{t}}}(z-y) (1+(\sqrt{\ep}+1)^2)^\ell \, dy  \vert  \; = \;  
$$
$$
2^\ell  (1+(\sqrt{\ep}+1)^2)^\ell  \int_{\R} (1+y^2)^\ell  \frac{1}{\sqrt{t}} \what{\psi}_{\frac{1}{\sqrt{t}}}(y) \, dy, 
$$
 is also uniformly bounded independently of $t$.  
 
By Proposition \ref{bddedconv},  the Schwartz kernel of $( \frac{1}{\sqrt{t}} \what{\psi}_{\frac{1}{\sqrt{t}}} * \rho_n) (D_L^E)$ converges uniformly to the Schwartz kernel of $\rho_n (D_L^E)$ when $t \to 0$.   As the Fourier transform of $ \frac{1}{\sqrt{t}} \what{\psi}_{\frac{1}{\sqrt{t}}} * \rho_n$ is in $C_c^\infty 
(\R)$,  the Schwartz kernel of $( \frac{1}{\sqrt{t}} \what{\psi}_{\frac{1}{\sqrt{t}}} * \rho_n) (D_L^E)$ belongs to $\Gamma_s(F, E)$.   It follows from results in [HL90], Section 2, that  the Schwartz kernel of $( \frac{1}{\sqrt{t}} \what{\psi}_{\frac{1}{\sqrt{t}}} * \rho_n) (D_L^E)$, so also  the Schwartz kernel of $\rho_n (D_L^E)$, is measurable for all $n$.

Now, $\rho_n$ converges pointwise to $\chi_{[-\sqrt{\ep}, \sqrt{\ep}]}$ and using again that for $\ell \geq 0$, $(1+y^2)^\ell \vert \rho_n (y)\vert\leq (1+(\sqrt{\ep}+1)^2)^\ell$,  Proposition 3.7 gives that the Schwartz kernel of  $\rho_n (D_L^E)$ converges pointwise to the Schwartz kernel of $P_{[0,\ep]}$, so  the Schwartz kernel of $P_{[0,\ep]}$ is also measurable.
\end{proof}
Thus,  $\Dim_{\Lam}(\Im(P_{[0,\ep]}))$ is well defined as an element of $[0, \infty]$, and can potentially be $\infty$. 

\medskip

Recall the sequence $M(s)$ of open subspaces, with compact complements, of $M$ defined at the beginning of this section.  Let  $s_0$ be such that $K\subset M\smallsetminus M(s_0)$.   Recall that $\varphi$ restricts to a foliated isometry which identifies $M(s_0)$ and its foliation, with $M'(s_0)$ and its foliation. Moreover, we have the identification
$$
\phi:E\vert_{M(s_0)} \simeq E'\vert_{M'(s_0)} \text{ and also the conjugation of the Dirac operators}.
$$
The restriction $\what{D}_L^E$ of the operator $D_L^E$ to $M(s_0)\cap L$ can be defined as in \cite{GL3} by simply restricting to $dom (D_L^E)\cap L^2 (M(s_0)\cap L, E)$. However, we shall rather define $\what{D}_L^E$ so that it is a closed operator acting from the Hilbert space $L^2(M(s_0)\cap L, E)$ to itself. This is achieved by setting 
$$
dom (\what{D}_L^E):=\{\xi\in L^2(M(s_0)\cap L, E)\, \vert\, D_L^E\xi\in L^2(M(s_0)\cap L, E)\}.
$$
Since $D_L^E$, with its maximal domain (= its minimal domain) is  closed and self-adjoint, the resulting operator $\what{D}_L^E$ is  a closed symmetric operator from $L^2(M(s_0)\cap L, E)$ to itself.

The  above conjugation over $M(s_0)$ then allows the identification of $\what{D}_L^E$ with the restricted  operator $\what{D}_L^{E'}$ of $D_L^{E'}$ to $M'(s_0)$.  
Denote by $P_0(s_0)$ the orthogonal projection onto $\Ker(\what{D}_L^E$).    Similarly, we have $P'_0(s_0)$, which we can identify with  $P_0(s_0)$. 
\begin{assu}\label{Assume}  For $s > s_0$,  \;
$\dd
\int_{M(s)}  \hspace{-0.2cm}k_{P_0(s_0)} \lan x,x \ran   \,  d\mu \;  <  \; \infty.
$ 
\end{assu}
\noi This assumption is satisfied for instance when the foliation admits PSC near infinity as we shall see shortly. 

\medskip

The following is the second half of the foliation relative measured index theorem.  
\begin{theorem}\label{mainSection4}
Suppose that there is $\ep_0 > 0$ so that  $\Dim_{\Lam}(\Im(P_{[0,\ep_0]}))$ and $\Dim_{\Lam}(\Im(P'_{[0,\ep_0]}))$ are finite, and that Assumption \ref{Assume} holds. Then, for $0 < \ep \leq \ep_0$,
$$
\Ind_{\Lam, \Lam'}(D_L^E, D_L^{E'}) \,\, = \,\,
\int_M \tr_s(k_{P_{[0, \ep]}}(x,x)) \, d\mu     \,\, - \,\, \int_{M'}   \tr_s(k_{P_{[0, \ep]}'}(x,x)) \, d\mu'.
$$
Thus,
$$
\int_M \tr_s(k_{P_0}(x,x)) d\mu     \,\, - \,\, \int_{M'}   \tr_s(k_{P_0'}(x,x)) \, d\mu' \,\, = \,\,
\int_{\cK} \AS (D_L ^E)_L  \, d\Lam \,\, - \,\,   \int_{\cK'} \AS (D_L ^{E'})_L  \, d\Lam'.
$$
\end{theorem}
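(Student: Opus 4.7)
The plan is to compute $\Ind_{\Lam, \Lam'}(D_L^E, D_L^{E'})$ using a finite-propagation parametrix whose remainder approximates the spectral projection $P_{[0,\ep]}$, then to deduce the ``thus'' identity by taking $\ep \to 0^+$ and invoking Theorem~\ref{Firstmain}. Adapting the construction of Lemma~\ref{MeasurableP}, I would choose even smooth functions $g_n = \psi_{a_n} * \rho_n$, where $\rho_n$ are smooth cutoffs equal to $1$ on $[-\sqrt{\ep},\sqrt{\ep}]$, supported in $[-\sqrt{\ep}-1/n, \sqrt{\ep}+1/n]$, and converging pointwise to $\chi_{[-\sqrt{\ep},\sqrt{\ep}]}$, while $\psi_{a_n}$ is an even Schwartz approximate identity with $\widehat{\psi}_{a_n}$ of compact support. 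Then $g_n(0)=1$, $\widehat{g_n}$ is of compact support, $g_n \to \chi_{[-\sqrt{\ep},\sqrt{\ep}]}$ pointwise with uniform bounds $(1+z^2)^\ell |g_n(z)| \leq C_\ell$ as needed for Proposition~\ref{ptwseconv}. The operator $Q_n := ((1-g_n(z))/z)(D_L^E)$ is then a finite-propagation parametrix with remainder $g_n(D_L^E) \in \Gam_s(F, E)$, and similarly on the primed side. By the $\varphi$-compatibility of the operators off $\maK, \maK'$, there is $s_n$ (depending on the propagation of $g_n$) such that the remainders are $\varphi$-related over $M(s_n), M'(s_n)$. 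Proposition~\ref{indepQ} and Remark~\ref{nonsquare} then give
$$
\Ind_{\Lam, \Lam'}(D_L^E, D_L^{E'}) \ = \ \int_{M \ssm M(s_n)} \tr_s\bigl(k_{g_n(D_L^E)}(x,x)\bigr) \, d\mu - \int_{M' \ssm M'(s_n)} \tr_s\bigl(k_{g_n(D_L^{E'})}(x,x)\bigr) \, d\mu'.
$$

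Setting $h_n := g_n(D_L^E) - P_{[0,\ep]}$, the right-hand side splits into a spectral-projection part and a residual part. The spectral-projection part, $\int_{M \ssm M(s_n)} \tr_s(k_{P_{[0,\ep]}})\, d\mu - \int_{M' \ssm M'(s_n)} \tr_s(k_{P'_{[0,\ep]}})\, d\mu'$, tends to $\int_M \tr_s(k_{P_{[0,\ep]}})\, d\mu - \int_{M'} \tr_s(k_{P'_{[0,\ep]}})\, d\mu'$ as $s_n \to \infty$ by dominated convergence, since $\Dim_\Lam(\Im(P_{[0,\ep_0]})) < \infty$ forces $\int_M \tr(k_{P_{[0,\ep]}})\, d\mu < \infty$ and analogously on the primed side. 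The hard part is the residual piece: Proposition~\ref{ptwseconv} gives $\tr_s(k_{h_n}(x,x)) \to 0$ pointwise, but the integration region $M \ssm M(s_n)$ grows and is non-compact, so pointwise decay alone is not enough. Here Assumption~\ref{Assume} enters: the finite $\Lam$-trace of the Dirichlet kernel projection $\wtit{P}$ on $\wtit{M}$, together with the finite propagation of $g_n(D_L^E)$ (which near infinity lets one compare $g_n(D_L^E)$ with functions of the restricted operator on $\wtit{M}$), supplies a uniform-in-$n$ integrable dominator for $k_{g_n(D_L^E)}(x,x)$ on $\wtit{M}$. On the compact complement $M \ssm \wtit{M}$ the uniform pointwise boundedness of the kernels suffices. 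Dominated convergence then forces the residual part to vanish as $n \to \infty$, yielding the main equality.

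For the ``thus'' identity, the main equality together with Theorem~\ref{Firstmain} shows that $\int_M \tr_s(k_{P_{[0,\ep]}})\, d\mu - \int_{M'} \tr_s(k_{P'_{[0,\ep]}})\, d\mu'$ and $\int_\maK \AS (D_L^E)_L\, d\Lam - \int_{\maK'} \AS(D_L^{E'})_L\, d\Lam'$ both equal $\Ind_{\Lam,\Lam'}$, independent of $\ep \in (0,\ep_0]$. Writing $P_{[0,\ep]} = P_0 + P_{(0,\ep]}$ and using the right-continuity at $0$ of the $\Lam$-spectral measure (a consequence of $\Dim_\Lam(\Im(P_{[0,\ep_0]})) < \infty$), one has $\Dim_\Lam(\Im(P_{(0,\ep]})) \to 0$ as $\ep \to 0^+$, whence $|\int_M \tr_s(k_{P_{(0,\ep]}})\, d\mu| \leq \Dim_\Lam(\Im(P_{(0,\ep]})) \to 0$ and similarly on the primed side. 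Taking $\ep \to 0^+$ in the main equality then gives the ``thus'' identity.
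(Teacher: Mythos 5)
Your main step---dominated convergence for the residual $h_n := (g_n-\chi_{[0,\ep]})(D_L^E)$ over the growing regions $M\ssm M(s_n)$---has a genuine gap, and the dominator you invoke is not there. First, $h_n$ itself has \emph{no} finite propagation, since $\chi_{[0,\ep]}$ has no compactly supported Fourier transform; the finite-propagation locality you appeal to applies to $g_n(D_L^E)$, not to its difference with the true spectral projection, so there is no way to compare $k_{h_n}(x,x)$ with a quantity intrinsic to $\wtit{M}$. Second, even the piece you can localize, $g_n(D_L^E)$ far from $\pa\wtit M$, is not controlled by Assumption~\ref{Assume}: that assumption bounds only $\int_{M(s)} k_{P_0(s_0)}\lan x,x\ran\,d\mu$, the diagonal trace of the \emph{kernel} projection of the restricted operator, while $k_{g_n(\what{D}_L^E)}\lan x,x\ran$ carries the full spectral weight of $\what{D}_L^E$ and its diagonal is generically a bounded but \emph{non}-$L^1$ function on a noncompact manifold (on $\R^p$ it is a nonzero constant by translation invariance). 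Claiming a uniform-in-$n$ integrable dominator for the residual is in effect claiming the content of the theorem.

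The paper's proof goes by a very different route precisely so as not to have to confront this residual. It takes for $Q$ and $Q'$ not spectral cutoffs of $D_L^E$, but spatial truncations $Q^\ep_\psi$ of the partial Green's operator $G^\ep=\zeta_\ep(D_L^E)$ by a bump $\psi$ which is identically $1$ near the leafwise diagonal; the crucial observation is that then $k_{S-R}\lan x,x\ran = \tr_s k_{P_{[0,\ep]}}(x,x)$ \emph{exactly}, on the nose, so there is no spectral approximation error left to estimate. The subsequent limit is driven by the Gromov--Lawson spatial cutoffs $f_n$ with $\Vert[D_L^E,f_n]\Vert\leq 2/n$, and the single error term is $\tfrac{2}{n}\int_{M(s)}|k_{Q-Q'}\lan x,x\ran|\,d\mu$. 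Finiteness of that integral is Proposition~\ref{AltLemma}, proved by a von Neumann algebra argument---parallelogram law for projections in the transverse algebra, Murray--von Neumann equivalence of projections, and normality of $\tau_\Lambda$---and that is where the hypotheses $\Dim_\Lam(\Im P_{[0,\ep_0]})<\infty$ and Assumption~\ref{Assume} actually do their work. Neither the exact-on-diagonal parametrix nor the von Neumann algebra estimate appears in your sketch, and these constitute the theorem's real technical content. (A minor point in addition: $g_n(0)\neq 1$ for $g_n=\psi_{a_n}\star\rho_n$ in general, so as in the paper's proof of Theorem~\ref{Firstmain} one must normalize by $g_n(0)^{-1}$.)
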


\begin{remark} 
Note that  $\AS (D_L^E)_L$ satisfies  $\dd\int_F \AS (D_L^E)_L \in \, \cA^0_c(T)$, while in general the integral of the global form $\dd\int_F \AS (D_L^E)  \in \, \cA^*_c(T)$, and may include higher order terms.  One might hope that Theorem \ref{mainSection4} extends to these higher order terms (and the higher order terms of the Chern characters of $P_0$ and $P_0'$), without additional restrictions on the spectral measures.  The examples in  \cite{BHW14} show that this is not the case, since they satisfy the hypothesis of Theorem \ref{mainSection4}, but not its conclusion for one of these higher order terms.  We show in \cite{BH21} that the restrictions given in \cite{HL99} and \cite{BH08} do allow for the extension to the higher order terms. 
\end{remark}

\begin{proof} 
For $\ep > 0$, denote by $\zeta_\ep:\R\to \R$ the bounded Borel function given by
$$
\zeta_\ep (x) \; = \; \frac{1}{x} \chi_{(\ep, \infty)}(x^2) \quad \text{and set} \quad G^{\ep} \; = \;  (G^{\ep}_L) \; = \; (\zeta_\ep (D_L^E)),
$$
 which is a bounded leafwise operator with  norm $\leq \frac{1}{\sqrt{\ep}}$.   Extend its leafwise Schwartz kernel $k_{G^{\ep}} (x, y)$ over $M\times M$, by defining it to be zero if $x$ and $y$ are not on the same leaf.  
\begin{lemma}\label{SmoothingGreen}
$k_{G^{\ep}}$  is  measurable and leafwise smooth off the diagonal. 
\end{lemma}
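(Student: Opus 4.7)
The plan is to prove the two assertions separately. For leafwise smoothness off the diagonal, I would first exploit the functional-calculus identity
$$
D_L^E\, G^{\ep} \; = \; G^{\ep}\, D_L^E \; = \; \Id - P_{[0,\ep]},
$$
which follows from $x\,\zeta_\ep(x) = \chi_{(\ep,\infty)}(x^2) = 1-\chi_{[-\sqrt{\ep},\sqrt{\ep}]}(x)$ together with the identification $P_{[0,\ep]} = \chi_{[-\sqrt{\ep},\sqrt{\ep}]}(D_L^E)$. Since $x^N\chi_{[-\sqrt{\ep},\sqrt{\ep}]}(x)$ is bounded for every $N$, the operator $(D_L^E)^N P_{[0,\ep]}$ is bounded for all $N$, so $P_{[0,\ep]}$ is leafwise smoothing and, by leafwise elliptic regularity, its kernel $k_{P_{[0,\ep]}}$ is leafwise smooth (this is already implicit in Lemma \ref{MeasurableP}). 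Then for fixed $y$ on a leaf $L$, the distributional identity
$$
D_L^E\, k_{G^{\ep}}(\cdot,y) \; = \; \delta_y - k_{P_{[0,\ep]}}(\cdot,y)
$$
on $L$ has a leafwise smooth right-hand side away from $y$, so leafwise ellipticity of $D_L^E$ gives that $k_{G^{\ep}}(\cdot,y)$ is leafwise smooth on $L\ssm\{y\}$. Since $\zeta_\ep$ is real-valued and $D_L^E$ is self-adjoint, $(G^{\ep})^*=G^{\ep}$, and the same argument applies in the $y$ variable; joint smoothness on the complement of the diagonal then follows by standard elliptic regularity applied to the pair of commuting leafwise elliptic operators $D_L^E\otimes \Id$ and $\Id\otimes D_L^E$.

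For measurability the obstruction is that $\zeta_\ep$ decays only like $1/x$, so direct approximation by Schwartz functions will not satisfy the weighted sup-norm hypothesis of Proposition \ref{ptwseconv}. I would circumvent this by factoring
$$
\zeta_\ep(x) \; = \; x^{2N+1}\, h_N(x), \qquad h_N(x) \; := \; x^{-(2N+2)}\chi_{(\ep,\infty)}(x^2),
$$
with $N$ chosen large enough that $(1+x^2)^{\ell/2} h_N(x)$ is bounded for the Sobolev exponent $\ell$ required by Proposition \ref{ptwseconv} (so $N$ depending only on $\dim F$). Then $h_N$ is a bounded Borel function with rapid decay, and, arguing exactly as in the proof of Lemma \ref{MeasurableP}, I would approximate $h_N$ pointwise and boundedly by smoothing its jumps at $\pm\sqrt{\ep}$ and convolving with the mollifier $\psi_a$ used in the proof of Theorem \ref{Firstmain}. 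The resulting approximations are Schwartz with Fourier transform in $C_c^{\infty}(\R)$, so by Theorem \ref{SCF&D1} their Schwartz kernels lie in $\Gam_s(F,E)$ and are in particular measurable. Propositions \ref{bddedconv} and \ref{ptwseconv} then give that their kernels converge pointwise to $k_{h_N(D_L^E)}$, which is therefore measurable. Since $G^{\ep} = (D_L^E)^{2N+1} h_N(D_L^E)$, we have
$$
k_{G^{\ep}}(x,y) \; = \; (D_L^E)^{2N+1}_x\, k_{h_N(D_L^E)}(x,y) \qquad \text{for } x\neq y,
$$
and combined with the smoothness in $x$ established above, this expresses $k_{G^{\ep}}$ off the diagonal as a differential operator (in leafwise coordinates of bounded geometry) applied to a measurable leafwise-smooth section; such a derivative is again measurable.

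The main obstacle is the interaction between the mere polynomial decay of $\zeta_\ep$ and the Schwartz-type approximation hypotheses of Propositions \ref{bddedconv} and \ref{ptwseconv}. The factorization trick transfers the obstruction onto $h_N$, which does admit Schwartz approximations in the required sense, at the cost of differentiating $2N+1$ times. Making that differentiation rigorous off the diagonal is precisely where the elliptic-regularity argument of the first half must be used, so the two parts of the lemma are genuinely coupled rather than independent.
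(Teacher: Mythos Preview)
Your argument is essentially correct but takes a substantially different and longer route than the paper. The paper's proof is a two-line parametrix trick: choose a uniform finite-propagation pseudodifferential parametrix $Q$ for $D_L^E$ with remainder $R=\Id-D_L^EQ\in\Gamma_s(F,E)$, and write
\[
G^{\ep} \;=\; G^{\ep}(D_L^EQ+R) \;=\; (\Id-P_{[0,\ep]})Q + G^{\ep}R \;=\; Q - P_{[0,\ep]}Q + G^{\ep}R.
\]
Since $Q$ already has a measurable, leafwise smooth kernel off the diagonal, and $-P_{[0,\ep]}Q+G^{\ep}R$ is leafwise smoothing with measurable kernel (because $P_{[0,\ep]}$ and $R$ are smoothing and measurable, and $G^{\ep}$ is bounded), both conclusions follow at once.

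Your approach instead proves smoothness by direct elliptic regularity and then obtains measurability via the factorization $\zeta_\ep(x)=x^{2N+1}h_N(x)$, approximating $h_N$ in the spirit of Lemma \ref{MeasurableP} and differentiating. This works, but note one small gap: to apply $(D_L^E)^{2N+1}_x$ pointwise to $k_{h_N(D_L^E)}$ off the diagonal you need $k_{h_N(D_L^E)}$ itself to be leafwise smooth there, not just $k_{G^\ep}$. This is true (the same elliptic regularity argument applies, since $(D_L^E)^{2N+2}h_N(D_L^E)=\Id-P_{[0,\ep]}$), but you should say so explicitly. What you gain is an argument that avoids invoking the existence of a uniform pseudodifferential parametrix in the bounded-geometry calculus; what you lose is brevity, since the paper's parametrix identity delivers smoothness, measurability, and even the uniform bounds needed later in Lemma \ref{PsiGreen} in a single stroke.
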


\begin{proof}
Since $(M, F)$ has bounded geometry, the operator $P_{[0, \ep]}$ is a leafwise smoothing operator, that is, it is bounded between any two leafwise Sobolev spaces, with a uniform global bound.  If $Q$ is a uniform finite propagation leafwise pseudodifferential parametrix of $D_L^E$, then  $\Id-D_L^EQ=R$ is a finite propagation leafwise smoothing bounded operator, so its kernel is in $\Gamma_s(F, E)$. Then
$$
G^{\ep} = Q-P_{[0, \ep]} Q+G^{\ep} R
$$
 where $-P_{[0, \ep]} Q+G^{\ep} R$ has a smooth uniformly bounded kernel.
Since $Q$ has a measurable leafwise smooth Schwartz kernel off the diagonal,  $k_{G^{\ep}}$ is also measurable and leafwise smooth off the diagonal.
\end{proof}

Recall the good cover $\cU = \{U_i, \psi_i\}$ of $M$, and its associated good cover $\{V_i, \psi_i\}$.
Let $\what{\psi}:M\times M \to [0,1]$ be a smooth bump function supported on $\bigcup_i V_i \times V_i$, a bounded open neighborhood of the diagonal $\Delta M$,  which is equal to $1$ on the smaller open neighborhood $\bigcup_i U_i  \times  U_i $ of $\Delta M$.  We require that all its derivatives $\pa_x^\alpha\pa_y^\beta \what{\psi} (x, y)$ in these local coordinates be uniformly bounded over $M \times M$.  Denote by $\psi:M\times M\to [0,1]$ the transversely measurable leafwise smooth function with $\psi = \what{\psi} $ on $ \bigcup_i V_i  \times_{\gam_i} V_i $,  where $\gam_i$ is the constant path at some point in $U_i$, and $\psi$ is zero otherwise.  Then $\psi$ belongs to $\Gamma_s(F, M \times \R)$ and is equal to $1$ on $ \bigcup_i U_i  \times_{\gam_i} U_i$. In particular, its leafwise derivatives are uniformly bounded over $M \times M$.  In addition, for any leaf $L$, $\psi$ restricted to  $L \times L$ is supported in an open bounded neighborhood of $\Delta L$,  and is $1$ on a smaller open neighborhood of $\Delta L$.    Denote by $Q^\ep_\psi$ the leafwise operator with leafwise Schwartz kernel 
$$
k_{Q^\ep_\psi} (x, y) \,\, = \,\,  \psi (x, y) k_{G^\ep} (x, y), 
$$
which is supported on $\bigcup_i V_i  \times_{\gam_i} V_i$.  

\begin{lemma}\label{PsiGreen}\
\begin{itemize}
\item The operators $Q^\ep_\psi$, $D_L^E Q^\ep_\psi$ and $Q^\ep_\psi D_L^E$ are uniformly bounded operators on the spaces $L^2(L, E)$.
\item  The operators $
R^\ep_\psi = \Id - D_L^E Q^\ep_\psi\text{ and } S^\ep_\psi = \Id - Q^\ep_\psi D_L^E$
have leafwise Schwartz kernels in $\Gamma_s(F, E)$. 
\end{itemize}
\end{lemma}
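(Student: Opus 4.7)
The plan is to compute the leafwise Schwartz kernels of $R^\ep_\psi$ and $S^\ep_\psi$ directly, using the spectral identity
\[
D_L^E G^\ep \,=\, G^\ep D_L^E \,=\, \Id - P_{[0,\ep]},
\]
which follows from $\zeta_\ep(x)\cdot x = x\cdot\zeta_\ep(x) = 1 - \chi_{[0,\ep]}(x^2)$. Applying $D_L^E$ in the $x$-variable to the defining kernel $\psi(x,y)k_{G^\ep}(x,y)$ and using that $\psi$ is scalar-valued (so $[D_L^E, M_\psi]$ is Clifford multiplication $c(d_x\psi)$) yields
\[
k_{R^\ep_\psi}(x,y) \,=\, \psi(x,y)\, k_{P_{[0,\ep]}}(x,y) \,-\, c(d_x\psi(x,y))\, k_{G^\ep}(x,y),
\]
with the analogous formula for $k_{S^\ep_\psi}$ involving $d_y\psi$. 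The first bullet will then be deduced from the second: since $R^\ep_\psi$ and $S^\ep_\psi$ will be shown to lie in $\Gam_s(F,E)$, they are uniformly bounded leafwise operators, and uniform boundedness of $D_L^E Q^\ep_\psi = \Id - R^\ep_\psi$ and $Q^\ep_\psi D_L^E = \Id - S^\ep_\psi$ follows immediately. Boundedness of $Q^\ep_\psi$ itself comes from Schur's test: the kernel $\psi\, k_{G^\ep}$ is supported in a uniformly bounded neighborhood of the diagonal, and after inserting the decomposition $G^\ep = Q - P_{[0,\ep]}Q + G^\ep R$ from Lemma \ref{SmoothingGreen}, all but the $\psi k_Q$ piece are smooth and uniformly $L^\infty$-bounded, while $\psi k_Q$ inherits its local behavior from a uniform bounded-geometry pseudodifferential parametrix.

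Next I would verify that each of the two summands of $k_{R^\ep_\psi}$ belongs to $\Gam_s(F,E)$. For $\psi(x,y)\, k_{P_{[0,\ep]}}(x,y)$, the cutoff $\psi$ is supported in $\bigcup_i V_i \times_{\gam_i} V_i$, giving uniformly bounded leafwise propagation, so it remains only to bound $k_{P_{[0,\ep]}}$ and all its leafwise derivatives uniformly. This follows from the spectral bound $\|(1+(D_L^E)^2)^\ell P_{[0,\ep]}\| \leq (1+\ep)^\ell$ for every $\ell \geq 0$ together with bounded geometry and the Sobolev/duality argument already used in the proof of Proposition \ref{ptwseconv}, yielding uniform pointwise bounds on $\pa_x^\alp \pa_y^\bta k_{P_{[0,\ep]}}(x,y)$; measurability is provided by Lemma \ref{MeasurableP}. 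For the second summand $c(d_x\psi)\, k_{G^\ep}$, the crucial observation is that $d_x\psi$ vanishes on a neighborhood of the diagonal where $\psi \equiv 1$, so this term is supported in a \emph{bounded} region lying \emph{off} the diagonal, precisely where Lemma \ref{SmoothingGreen} guarantees that $k_{G^\ep}$ is measurable and leafwise smooth. To obtain uniform derivative bounds here, I would again apply the decomposition $G^\ep = Q - P_{[0,\ep]}Q + G^\ep R$: the piece $c(d_x\psi)\, k_Q$ is smooth because $d_x\psi$ annihilates the diagonal singularity of the parametrix, while the remaining pieces are manifestly uniformly smooth by the same argument as for $P_{[0,\ep]}$.

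The main obstacle is the uniform control of \emph{all} leafwise derivatives, not merely $L^\infty$ bounds on the kernels themselves. For $k_{P_{[0,\ep]}}$ this requires pairing spectral estimates with Sobolev embedding on leaves of bounded geometry and checking that the constants are genuinely independent of the leaf; for $k_Q$ off the diagonal it requires picking the parametrix $Q$ inside the uniform bounded-geometry pseudodifferential calculus of Shubin so that its off-diagonal kernel, together with all derivatives, decays uniformly in the appropriate leafwise sense. Once these uniformities are in hand, the fact that $\psi$ (and hence $d\psi$) has all derivatives uniformly bounded by the bounded geometry of the good cover $\{V_i,\psi_i\}$ makes the membership $k_{R^\ep_\psi},\,k_{S^\ep_\psi} \in \Gam_s(F,E)$ routine, closing the lemma.
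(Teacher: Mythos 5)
Your proof is correct but takes a genuinely different route from the paper. The paper chooses a uniform pseudodifferential parametrix $Q$ for $D_L^E$ supported where $\psi\equiv 1$, substitutes the resolvent-type identity $G^\ep = Q - P_{[0,\ep]}Q + G^\ep R$ into $k_{Q^\ep_\psi}=\psi k_{G^\ep}$ to write $Q^\ep_\psi = Q + (\text{terms in }\Gamma_s(F,E))$, and then derives the properties of $R^\ep_\psi$, $S^\ep_\psi$ from those of the parametrix remainders $R$, $S$. You instead compute the remainder kernels \emph{directly}: using the spectral identity $D_L^E G^\ep = \Id - P_{[0,\ep]}$ and the Leibniz rule $D^x_L[\psi\, k_{G^\ep}] = c(d_x\psi)\, k_{G^\ep} + \psi\, k_{D_L^E G^\ep}$, you obtain the closed-form expression $k_{R^\ep_\psi} = \psi\, k_{P_{[0,\ep]}} - c(d_x\psi)\, k_{G^\ep}$, and verify membership in $\Gamma_s(F,E)$ term by term: the $\psi\,k_{P_{[0,\ep]}}$ term by spectral smoothing of $P_{[0,\ep]}$ plus the finite propagation forced by $\psi$, the $c(d_x\psi)\,k_{G^\ep}$ term by the observation that $d_x\psi$ kills a neighborhood of the diagonal, which is exactly where the only singularity of $k_{G^\ep}$ sits. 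The auxiliary parametrix $Q$ is only used in your argument to control $k_{G^\ep}$ off the diagonal (via Lemma \ref{SmoothingGreen}), not to build $Q^\ep_\psi$ itself. What your route buys is an explicit remainder formula that, in particular, reads off immediately on the diagonal $k_{R^\ep_\psi}\lan x,x\ran = k_{P_{[0,\ep]}}\lan x,x\ran$ and the analogous identity for $S^\ep_\psi$ (since $\psi(x,x)=1$ and $d_x\psi(x,x)=0$), which is precisely the fact the paper needs and asserts without such an explicit formula later in the proof of Theorem \ref{mainSection4}. The paper's route is somewhat shorter because it leans on the parametrix machinery of the uniform calculus; yours is a bit more hands-on but more transparent.

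One small thing worth spelling out if you write this up: the identity $\psi(x,y)\,\delta(x,y)=\delta(x,y)$, which you use implicitly when you pull the $\delta$ outside, holds because $\psi$ is identically $1$ on the smaller diagonal neighborhood $\bigcup_i U_i\times_{\gamma_i}U_i$; and the uniform bounds on all the leafwise derivatives of $k_{P_{[0,\ep]}}$ should be stated as Sobolev-to-Sobolev estimates (i.e.\ $\|(1+(D_L^E)^2)^{j/2}P_{[0,\ep]}(1+(D_L^E)^2)^{k/2}\|\leq(1+\ep)^{(j+k)/2}$ for all $j,k\geq 0$) combined with the bounded-geometry Sobolev embedding on leaves, exactly as in the proof of Lemma \ref{finiteint}; the single spectral bound you quote controls only mixed derivative pairs in $x$ but the two-sided version is what you actually invoke.
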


\begin{proof}
We may choose a leafwise pseudodifferential parametrix $Q$ for $D_L^E$ which is supported in the leafwise neighborhood of the diagonal where the function $\psi$ is identically $1$, and we may assume that it is in the uniform bounded calculus of each leaf, see \cite{KordyukovThesis}.  Then $R = \Id - D_L^E Q$ and $S = \Id - Q D_L^E$ belong to $\Gamma_s(F, E)$, and 
$$
k_{Q^\ep_\psi} (x, y) = k_Q (x, y) - \psi (x, y) k_{P_{[0, \ep]} Q} (x, y) + \psi (x, y) k_{G^\ep R} (x, y).
$$ 
By \cite{KordyukovThesis} we know that $Q$ is a uniformly bounded operator on the spaces $L^2(L, E)$.  As noted in the proof of Lemma \ref{SmoothingGreen}, $P_{[0, \ep]}$ is a leafwise smoothing operator, so also is $P_{[0, \ep]} Q$, see for instance \cite{KordyukovThesis}. Therefore,  the restriction of the Schwartz kernel $\psi (x, y) k_{P_{[0, \ep]} Q} (x, y)$ to any $L\times L$ belongs to the class of uniform finite propagation smoothing operators, that is it satisfies  [Definition 3.1] of \cite{Shubin}. In particular, it is supported in a uniformly finite distance neighborhood of the diagonal, and is uniformly bounded with all its derivatives in local normal coordinates. But such a kernel defines a leafwise smoothing operator, see again \cite{KordyukovThesis, Shubin}, so it is uniformly bounded.

Finally, by the Spectral Mapping Theorem, the operator  $G^\ep$ is  uniformly bounded between any Sobolev space and itself, so the operator $G^\ep R$ is a uniformly smoothing operator. The bounded geometry assumption then implies that $k_{G^\ep R}$ is measurable and leafwise smooth with uniform $L^\infty$-bounds on all its derivatives in local normal coordinates.  Using the properties of $\psi$,  this implies that $\psi (x, y) k_{G^\ep R} (x, y)$ is also measurable and leafwise smooth  with uniform $L^\infty$-bounds on all its derivatives in local normal coordinates. In addition, this latter kernel has uniform finite propagation and therefore belongs to $\Gamma_s(F, E)$. This proves in particular that $Q^\ep_\psi$ is uniformly bounded and that $Q^\ep_\psi - Q$ belongs to $\Gamma_s(F, E)$.

As 
$$
D_L^E Q^\ep_\psi  \, = \,\, \Id \, - \, R \, + \, D_L^E (Q^\ep_\psi - Q) \quad \text{and} \quad 
Q^\ep_\psi D_L^E  ,\, = \,\, \Id \, - \, S \, + \, (Q^\ep_\psi - Q)D_L^E,
$$
we also have that $D_L^E Q^\ep_\psi$ and $Q^\ep_\psi D_L^E$ are uniformly bounded operators on the spaces $L^2(L, E)$, and both $R^\ep_\psi$ and $S^\ep_\psi$  have leafwise Schwartz kernels in $\Gamma_s (F, E)$.
\end{proof}

Denote by $\fm_0$ multiplication by the characteristic function of $M(s_0)$, which we identify with $M'(s_0)$.  Set $\what{G}^{\ep} = \fm_0 G^{\ep} \fm_0$.  We also have the operators $ \what{G}'^\ep= \fm_0 G^{'\ep} \fm_0$,  $\fm_0\Id' \fm_0 = \fm_0\Id\fm_0$,  $\what{P}_{[0,\ep]} = \fm_0 P_{[0,\ep]} \fm_0 $, $\what{P}'_{[0,\ep]}  = \fm_0 P'_{[0,\ep]} \fm_0$,  $ \what{P}_{(\ep,\infty)} = \fm_0 (P_{(\ep,\infty)}) \fm_0$, and   $\what{P}'_{(\ep,\infty)} = \fm_0(P'_{(\ep,\infty)})\fm_0$.  Note carefully that these operators are acting on the Hilbert spaces $\cH(s_0) = (\cH_L(s_0))= (L^2(L \cap M(s_0), E |_{L \cap M(s_0)}, dx |_{L \cap M(s_))})$. 

\begin{prop}\label{AltLemma}  
For $s > s_0$, \;
$\dd
\int_{M(s)} \vert k_{\what{G}^\ep} \lan x,x \ran   -      k_{ \what{G}'^\ep} \lan x,x \ran \vert   \,  d\mu \;  <  \; \infty.
$
\end{prop}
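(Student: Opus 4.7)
The plan is to use the parametrix decomposition from Lemma~\ref{SmoothingGreen} to split the integrand into three pieces: one that cancels by $\Phi$-compatibility, one controlled by the finiteness of $\Dim_\Lam(\Im P_{[0,\ep]})$, and one controlled by Assumption~\ref{Assume}. I choose finite-propagation leafwise parametrices $Q$ for $D_L^E$ and $Q'$ for $D_L^{E'}$ that are $\Phi$-compatible on $V$ and $V'$ (possible because the two Dirac operators agree there via $\Phi$), with $\Phi$-compatible remainders $R = \Id - D_L^E Q$ and $R' = \Id - D_L^{E'} Q'$ in $\Gam_s(F,E)$ and $\Gam_s(F',E')$. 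Lemma~\ref{SmoothingGreen} then gives $G^\ep = Q - P_{[0,\ep]}Q + G^\ep R$, and similarly for $G'^\ep$. Compressing by $\fm_0$ and evaluating the trace on the diagonal at $x \in M(s) \subset M(s_0) \subset V$, the singular parametrix contributions $k_Q\lan x,x\ran$ and $k_{Q'}\lan \varphi(x),\varphi(x)\ran$ agree and cancel.

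For the remaining piece $|k_{P_{[0,\ep]}Q}\lan x,x\ran - k_{P'_{[0,\ep]}Q'}\lan \varphi(x),\varphi(x)\ran|$, I apply pointwise Cauchy--Schwarz to obtain $|k_{P_{[0,\ep]}Q}\lan x,x\ran|^2 \leq k_{P_{[0,\ep]}}\lan x,x\ran \cdot k_{Q^*P_{[0,\ep]}Q}\lan x,x\ran$, integrate over $M$, and apply Cauchy--Schwarz once more. Using cyclicity of the $\Lambda$-trace together with the uniform $L^2$-boundedness of $Q$ from Lemma~\ref{PsiGreen}, one gets
$$
\int_M |k_{P_{[0,\ep]}Q}\lan x,x\ran| \, d\mu \; \leq \; \|Q\| \cdot \Dim_\Lam(\Im P_{[0,\ep]}) \; < \; \infty,
$$
and the analogous bound on $M'$ handles the primed piece. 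For the $G^\ep R$ piece, $R \in \Gam_s(F,E)$ has finite propagation and $G^\ep$ is $L^2$-bounded with norm $\leq 1/\sqrt{\ep}$, so bounded geometry combined with Sobolev duality gives $|k_{G^\ep R}(x,y)|$ uniformly bounded on $M \times M$. The $\Phi$-compatibility of $R,R'$ forces the local sources $k_R(\cdot,x)$ and $k_{R'}(\cdot,\varphi(x))$ to be $\Phi$-identified for $x \in M(s)$ with $s$ larger than the propagation radius of $R$, so the difference $k_{G^\ep R}\lan x,x\ran - k_{G'^\ep R'}\lan \varphi(x),\varphi(x)\ran$ reflects only the global spectral discrepancy between $D_L^E$ and $D_L^{E'}$. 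This discrepancy is mediated by the kernel projection $P_0(s_0)$ of the restricted Dirac operator on $M(s_0)$, and Assumption~\ref{Assume} supplies the required $L^1(M(s))$ bound.

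The main obstacle is this last step: translating the abstract $\Phi$-compatibility of $R$ and $R'$ into a quantitative pointwise estimate of $|k_{G^\ep R}\lan x,x\ran - k_{G'^\ep R'}\lan \varphi(x),\varphi(x)\ran|$ by an integrable multiple of $k_{P_0(s_0)}\lan x,x\ran$. Because $G^\ep$ and $G'^\ep$ are global operators with genuinely different spectral decompositions, their diagonal difference does not vanish pointwise off $\cK$, and careful resolvent-style bookkeeping is required to identify exactly which kernel modes of the restricted operator survive after compression by $\fm_0$ and to reduce the final estimate to Assumption~\ref{Assume}.
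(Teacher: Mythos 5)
Your decomposition $G^\ep = Q - P_{[0,\ep]}Q + G^\ep R$ handles the singular and projection pieces plausibly, but the $G^\ep R$ piece is a genuine gap, and you acknowledge it yourself.  The problem is intrinsic to the decomposition: $R$ and $R'$ being $\Phi$-identified near $x\in M(s)$ controls the \emph{source} of the operator, but $G^\ep$ and $G'^\ep$ are global spectral functions of two different Dirac operators, so there is no reason for $k_{G^\ep R}\lan x,x\ran - k_{G'^\ep R'}\lan \varphi(x),\varphi(x)\ran$ to be dominated pointwise by (an integrable multiple of) $k_{P_0(s_0)}\lan x,x\ran$.  ``Resolvent-style bookkeeping'' does not produce such an estimate: the discrepancy involves the full spectral measure of both operators on all of $M$ and $M'$, not just the restricted kernel on $M(s_0)$.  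Without a concrete mechanism tying this diagonal difference to $P_0(s_0)$, Assumption \ref{Assume} cannot be invoked, and the integral is unbounded a priori.

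The paper avoids the decomposition entirely.  The missing ideas are: (i) a \emph{pointwise} dominance, $\vert k_{PAP}\langle x,x\rangle\vert \leq \Vert A\Vert\, k_P\langle x,x\rangle$ for any self-adjoint projection $P$ with smooth leafwise kernel and any $\Lambda$-essentially bounded self-adjoint $A$ (Lemma \ref{eubf} in the paper), which replaces your kernel Cauchy--Schwarz; (ii) the structural operator identity $\what{D}_L^E(\what{G}^\ep - \what{G}'^\ep) = \what{P}'_{[0,\ep]} - \what{P}_{[0,\ep]}$ on $\cH(s_0)$, which shows that the closed range of $\what{G}^\ep - \what{G}'^\ep$ sits inside $\overline{\Ker(\what{D}_L^E) + \Im\bigl[(\what{G}^\ep - \what{G}'^\ep)(\what{P}_{[0,\ep]} - \what{P}'_{[0,\ep]})\bigr]}$; and (iii) a von Neumann algebra argument (parallelogram law for projections, normality of $\tau_\Lambda$, Murray--von Neumann equivalence) to bound the trace of the projection onto that range by the traces of $P_{[0,\ep]}$, $P'_{[0,\ep]}$ (finite by hypothesis) and $P_0(s_0)$ (finite by Assumption \ref{Assume}).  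Applying (i) with $A = \what{G}^\ep - \what{G}'^\ep$ and $P$ the range projection then converts the integral in the statement to $\int_{M(s)} k_P\langle x,x\rangle\, d\mu$, which (ii)+(iii) show is finite.  Step (ii) is the structural fact your parametrix route never surfaces, and it is precisely what turns the global spectral discrepancy you correctly identified as the obstacle into something controllable.
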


\begin{proof}
We begin with the following.

\begin{lemma}\label{eubf}
Let $A=(A_L)$ be a $\Lambda$-essentially uniformly bounded family of self-adjoint operators, acting on the Hilbert spaces $\cH(s) = (\cH_L(s))$. Suppose $P=(P_L)$, where each $P_L$ is a self-adjoint projection on  $\cH_L$ with smooth leafwise Schwartz kernel.  Then the Schwartz kernel of $PAP$ satisfies $\vert k_{PAP}\langle x,x\rangle \vert \leq ||A|| k_P\langle x, x \rangle$, where  $||A||$ is the $\Lambda$-essential supremum over $M(s)$ of the operator norms of the operators $A_L$.
\end{lemma}

\begin{proof}
The operator $||A|| \Id_L - A_L$ is a self-adjoint non-negative operator for any $L$. Therefore, the operator $P_L (||A|| \Id_L - A_L) P_L$ is also a selfadjoint non-negative operator, and its leafwise Schwartz kernel $||A|| k_{P_L} - k_{P_LA_LP_L}$, when restricted to the diagonal is a non-negative section whose trace is a non-negative function. 
Therefore the local trace function is non-negative. But this is equal to $||A|| k_{P}\langle x, x\rangle - k_{PAP}\langle x, x \rangle$. 
Since $k_{PAP}\langle x, x\rangle$ is a real function, we can use the same argument with $-PAP$ and get $||A|| k_{P}\langle x, x\rangle \pm k_{PAP}\langle x, x \rangle \geq 0$ and hence the conclusion.
\end{proof}

Next, we adapt the proof of Lemma 4.28  in \cite{GL3},  and the material on its preceding page.

Consider  $\overline{\Im [(\what{G}^\ep -  \what{G}'^\ep)(\what P_{[0,\ep]}-\what P'_{[0,\ep]})]}$, the closure of the range of $(\what{G}^\ep -  \what{G}'^\ep)(\what P_{[0,\ep]}-\what P'_{[0,\ep]})$.   Since we are identifying $M(s_0)$ with $M'(s_0)$,  we can identify  $D_L^{E'}$ restricted to $M'(s_0)$ with $\what{D}_L^E = D_L^E$ restricted to $M(s_0)$.    Denote by  $P$ the orthogonal projection onto the  closure of the subspace 
$$
W \; = \; \Ker (\what{D}_L^E ) + \overline{\Im [(\what{G}^\ep -  \what{G}'^\ep)(\what P_{[0,\ep]}-\what P'_{[0,\ep]})]}.
$$
Then
$$
\what{G}^\ep -  \what{G}'^\ep= P (\what{G}^\ep -  \what{G}'^\ep) = P (\what{G}^\ep -  \what{G}'^\ep) P.
$$
Indeed, one has (acting on $\cH(s_0)$),
$$
\what{D}_L^E(\what{G}^{\ep}  \; - \;  \what{G}'^\ep) \; = \; 
\fm_0 (D_L^E G^{\ep}) \fm_0  \; - \; \fm_0 (D_L^{E'} G^{'\ep}) \fm_0 \; = \;
 \fm_0 (P_{(\ep,\infty)}) \fm_0  \; - \; \fm_0(P'_{(\ep,\infty)})\fm_0  \; = \;
$$ 
$$
\fm_0(\Id -P_{(\ep,\infty)})\fm_0 \; - \; \fm_0(\Id'- P'_{(\ep,\infty)})  \fm_0    \; = \;
\fm_0(P'_{[0,\ep]})\fm_0 \; - \; \fm_0(P_{[0,\ep]})  \fm_0  \; = \; \what P'_{[0,\ep]}-\what P_{[0,\ep]}. 
$$
Therefore, $(\what{G}^\ep -  \what{G}'^\ep) (\Ker (\what P_{[0,\ep]}-\what P'_{[0,\ep]})) \subset \Ker (\what{D}_L^E )$, so the range of $\what{G}^\ep -  \what{G}'^\ep$ is contained in
$$
\Im ((\what{G}^\ep -  \what{G}'^\ep)(\what P_{[0,\ep]}-\what P'_{[0,\ep]})) +  (\what{G}^\ep -  \what{G}'^\ep) (\Ker (\what P_{[0,\ep]}-\what P'_{[0,\ep]})) \subset W.
$$
Next, denote by $\rho_\ep$ the orthogonal projection onto the closure of the range of the self-adjoint operator $\what{G}^\ep -  \what{G}'^\ep$.   Then
$$
\rho_\ep P = P\rho_\ep = \rho_\ep \text{ and } \rho_\ep (\what{G}^\ep - 
 \what{G}'^\ep)=\what{G}^\ep -  \what{G}'^\ep,
$$
and so,
$$
\what{G}^\ep -  \what{G}'^\ep=  \rho_\ep (\what{G}^\ep -  \what{G}'^\ep)= P  \rho_\ep (\what{G}^\ep -  \what{G}'^\ep) = P (\what{G}^\ep -  \what{G}'^\ep).
$$
The equality $P (\what{G}^\ep -  \what{G}'^\ep) = P (\what{G}^\ep -  \what{G}'^\ep) P$  follows from the fact that all the operators are self-adjoint.

Now the norms of $\what{G}_\ep$ and $\what{G}'_\ep$ are bounded, and by Lemma \ref{eubf}, 
$$
\vert k_{\what{G}_\ep - \what{G}'_\ep} \langle x, x \rangle \vert \leq \vert\vert \what{G}_\ep - \what{G}'_\ep\vert\vert k_P\langle x, x \rangle.
$$
Thus, we only need to show that for any $s>s_0$,
$$
\int_{M(s)} k_P\langle x, x \rangle \, d\mu <+\infty.
$$
Recall that  $P_0(s_0)$ is the orthogonal projection onto  $\Ker(\what{D}_L^E)$, so  
by Assumption \ref{Assume}, this inequality follows provided we show that,
$$
\int_{M(s)} k_{P-P_0(s_0)}\langle x, x \rangle \, d\mu <+\infty,
$$
since $\Im(P_0(s_0)) \subset \Im(P)$.

To this end, consider the von Neumann algebra $W^*= W^*(M(s_0), \Lambda)$ of $F$ and $E$ restricted to $M(s_0)$ with respect to the trace $\tau_\Lambda$ associated with the restriction of $\Lambda$ to the Borel transversals in $M(s_0)$.  Denote by $r_\ep$ the leafwise orthogonal projection onto $\overline{\Im [(\what{G}^\ep -  \what{G}'^\ep)(\what P_{[0,\ep]}-\what P'_{[0,\ep]})]}$.  By the parallelogram law for projections in  $W^*$,   the orthogonal projection $P-P_0(s_0)$ is Murray-von Neumann equivalent to the orthogonal subprojection $r_\ep$.  As $\tau_\Lambda$ is constant on the Murray-von Neumann classes and non-negative, we have the estimate
$$
\int_{M(s_0)}  k_{P-P_0(s_0)} \langle x, x \rangle \, d\mu \; \leq \; \int_{M(s_0)} k_{r_\ep} \langle x, x \rangle d\, \mu.
$$
Since the image of $(\what{G}_\ep - \what{G}'_\ep)(\what P_{[0,\ep]}-\what P'_{[0,\ep]})$ is contained in 
$$
\Im \left((\what{G}_\ep - \what{G}'_\ep) \what P_{[0,\ep]}\right) + \Im \left((\what{G}_\ep - \what{G}'_\ep) \what P'_{[0,\ep]}\right),
$$
by the parallelogram law for projections, we are reduced to proving that the 
orthogonal projection onto the closure of  each of these subspaces  is $\tau_\Lambda$-trace class. 

As the proofs are the same, we only prove the first.  The subspace $\Im \left((\what{G}_\ep - \what{G}'_\ep)  \what P_{[0,\ep]}\right)$ is contained in $\Im \left((\what{G}_\ep - \what{G}'_\ep)  P_{[0,\ep]}\right)$, where the operator $(\what{G}_\ep - \what{G}'_\ep)  P_{[0,\ep]}$ is now acting leafwise in the whole manifold $M$, obtained by extending $\what{G}_\ep$ and $\what{G}'_\ep$ by zero off $M(s_0)$.   Thus,  we are reduced to checking that the orthogonal projection onto the closure of $\Im \left((\what{G}_\ep - \what{G}'_\ep)  P_{[0,\ep]}\right)$ is $\tau_\Lambda$-trace class, and has integrable leafwise Schwartz kernel. 

Recall that, by assumption,  $P_{[0,\ep]}$ (as well as $P'_{[0,\ep]}$ in $M'$) is  leafwise smoothing with finite $\tau_\Lambda$-trace. On the other hand there exists an isometry with dense range between $\Im (P_{[0,\ep]})\cap \Ker (\what{G}_\ep - \what G'_\ep)^\perp$ and the closure of the image of $(\what{G}_\ep - \what{G}'_\ep)  P_{[0,\ep]}$, acting  in the $L^2$-spaces of the leaves of $M$. A classical argument, \cite{BF}, using normality of the trace $\tau_\Lambda$ then implies that the $\tau_\Lambda$-trace of the projection onto the closure of the image of $(\what{G}_\ep - \what{G}'_\ep)  P_{[0,\ep]}$ coincides with that of the projection onto  $\Im (P_{[0,\ep]})\cap \Ker (\what{G}_\ep - \what{G}'_\ep)^\perp$. This latter is a subprojection of $P_{[0,\ep]}$, so it has finite $\tau_\Lambda$-trace and is also smoothing.   Thus,  the $\mu$-integral of its local kernel trace is finite. The conclusion follows. 

The same argument works for $(\what{G}_\ep - \what{G}'_\ep) \what P'_{[0,\ep]}$ by considering $\what{G}_\ep - \what{G}'_\ep$  in $M' (s_0)$ and extending it to $M'$.  This completes the proof of Proposition \ref{AltLemma}. 
\end{proof}

By Lemma \ref{PsiGreen} applied to $D_L^E$  and  $D_L^{E'}$, the operators $Q^\ep_\psi$ and $Q'^\ep_\psi$ are finite propagation parametrices for $(D_L^E)^+$ and $(D_L^{E'})^+$ respectively.  For simplicity, we denote these by $Q$ and $Q'$,  and by $S, R$ and $S', R'$ the corresponding remainders.

Following the proof of Theorem (1.17) in \cite{GL3},  we let $(f_n:M\to [0, 1])_{n\geq 1}$ be an increasing  sequence of measurable   compactly supported functions, which are leafwise smooth and such that: 
\begin{itemize}
\item for any compact subspace $B$ in $M$, the functions $f_n$ are identically $1$ on  $B$ for large enough $n$;
\item for all $n$ and any leaf $L$, $\vert\vert [D_L^E, f_n]\vert \vert \leq \frac{2}{n}$.
\end{itemize}
 We may assume  that each $f_n$ is equal to $1$ on a relatively compact open subspace $U$ containing $K$, such that the finite propagation operator $Q$ sends sections supported in $M\smallsetminus U$ to sections supported in $M\smallsetminus K$. Then the leafwise operator $(1-f_n\circ \varphi^{-1}) Q'$, which is well defined in $\varphi (M\smallsetminus U)$, can be transferred using the isomorphism $\Phi =(\phi, \varphi)$ to a leafwise  operator on $M\smallsetminus U$.  We denote it by $(1-f_n) Q'$. We extend it  to  an operator on $(M, F)$ by making it $0$ where it is not already defined.  We then define  new parametrices for $(D_L^E)^+$  by setting
$$
Q_n  \; = \; f_n Q \; + \; (1-f_n) Q'.
$$  The advantage is that each pair $(Q_n, Q')$ is $\varphi$-compatible.

Since the relative measured index can be computed using any pair of $\varphi$-compatible finite propagation parametrices modulo $\Gamma_s (F, E)$, we may use the remainders $R_n$, $S_n$, $R'$ and $S'$ obtained in this way, i.e.
$$
R_n=f_n R + (1-f_n) R'   \text{ and } S_n=f_n S+ (1-f_n) S'-[D_L^E, f_n] (Q-Q'),
$$
with
$$
R=\Id - Q ((D_L^E)^+), \;\; S=\Id - ((D_L^E)^+) Q,  \;\; R' =\Id - Q' ((D_L^{E'})^+), \;\;  \text{and}  \;\; S'=\Id - ((D_L^{E'})^+) Q'.
$$
By Remark \ref{nonsquare}, we can compute $\Ind_{\Lambda, \Lambda'} (D_L^E, D_L^{E'})$ by the formula below, namely using $R_n$, $S_n$, $R'$ and $S'$ in place of their squares.  Note that the formula is independent of $n$, and that,  for example, $k_{S_n}\lan x, x\ran  -  k_{R_n}\lan x, x\ran = k_{S_n - R_n}\lan x, x\ran$, etc., which simplifies the notation.

Now
$$
\Ind_{\Lambda, \Lambda'} (D_L^E, D_L^{E'})  =  
\lim_{s\to\infty}\left( \int_{M\smallsetminus M(s)}\hspace{-0.8cm} k_{S_n -R_n} \lan x,x \ran \, d\mu  -  
\int_{M'\smallsetminus M'(s)} \hspace{-0.8cm} k_{S'-R'} \lan x,x \ran  \, d\mu' \right) =
$$
$$
\lim_{s\to\infty}\left( \int_{M\smallsetminus M(s)}  \hspace{-1.0cm} f_n (x) k_{S-R}  \lan x,x \ran \, d\mu  -  \int_{M'\smallsetminus M'(s)}  \hspace{-1.0cm}  f_n(x')  k_{S'-R'}  \lan x,x \ran \,  d\mu' -
\int_{M\smallsetminus M(s)}  \hspace{-0.3cm} k_{[D_L^E, f_n] (Q-Q')}  \lan x,x \ran \, d\mu \right) =
$$
$$ \int_M f_n (x) k_{S-R}  \lan x,x \ran \, d\mu  -  \int_{M'} f_n(x') k_{S'-R'}  \lan x,x \ran \,  d\mu'  -  
\int_{M\smallsetminus K} \hspace{-0.3cm} (\nabla_{L} f_n)(x) k_{Q-Q'}  \lan x,x \ran \, d\mu.
$$
Here $f_n$ is defined on $M'$ by the transport using $\varphi$ and by defining it to be $1$ where it is not already defined.  We also used that for large enough $s$, the support of $f_n$ is contained in $M\smallsetminus M(s)$. Finally $[D_L^E, f_n]$ is the zero-th order differential operator which is Clifford multiplication by the leafwise gradient $\nabla_{L} f_n$ of $f_n$, and the compact support of $\nabla_{L} f_n$ is contained in $M\smallsetminus K$. 

Note that by our choice of parametrix defined by cutting off $G^{\ep}$ and $G^{'\ep}$, we have that on the diagonal
$$
k_{S-R}  \lan x,x \ran = \tr_s k_{P_{[0,\ep]}} (x, x)\text{ and }  k_{S'-R'}  \lan x,x \ran = \tr_s k_{P_{[0,\ep]}'} (x, x).
$$
Thus, by the Dominated Convergence Theorem, 
$$
\lim_{n\to\infty} \int_M f_n (x) k_{S-R}  \lan x,x \ran \, d\mu=\int_M \tr_s (k_{P_{[0,\ep]}} (x, x)) \, d\mu,
$$
and
$$
\lim_{n\to \infty} \int_{M'} f_n(x') k_{S'-R'}  \lan x,x \ran \, d\mu'=\int_{M'} \tr_s  k_{P_{[0,\ep]}'} (x, x)\,  d\mu'.
$$

Finally,  for large enough $n$, the support of $\nabla_L f_n$ is a subset of $M(s)$ with $s > s_0$, as large as we please.  So, we need only show that,  
$\dd \lim_{n\to\infty} \int_{M(s)} \hspace{-0.2cm} (\nabla_{L} f_n)(x) k_{Q-Q'}  \lan x,x \ran \, d\mu = 0$. 
Recall that the restriction of $k_Q$ to the diagonal coincides with that of $k_{G^\ep}$, so for large $s$, it  coincides with that of $k_{\what{G}^\ep}$ and similarly for $k_{Q'}$.  Thus, by Proposition \ref{AltLemma} ,  we have 
$$ 
\int_{M(s)}  \hspace{-0.2cm} \left\vert k_{Q-Q'}  \lan x,x \ran \right\vert  d\mu <\infty.
$$ 
As $\vert\vert \nabla_L f_n\vert\vert \leq \frac{2}{n}$  the proof is now complete for the $[0,\ep]$-projections.

The second statement with $\ep=0$ follows immediately using the Dominated Convergence Theorem and the fact that each integrand decreases as $\ep$ decreases to zero.
   
This completes the proof of Theorem \ref{mainSection4}. 
\end{proof}
 
\section{Operators with $\Lam$ finite spectral projections}\label{TIMAFTS}

We now give examples of operators which satisfy the hypotheses of Theorem \ref{mainSection4}.   

For a leafwise Dirac operator $D_L^E$,  the canonical operator $\maR^E_F$ on sections of $E_L$ is given by
$$
\maR^E_F(\varphi) \,\, = \,\, \frac{1}{2} \sum_{j,k=1}^p e_j \cdot e_k \cdot  R^E_{e_j,e_k} (\varphi),
$$ 
where $R^E$ is the curvature operator of the connection $\nabla$ on $E_L$,  $e_1,...,e_p$ is a local orthonormal basis of $TF$, and $e_j\cdot$ is Clifford multiplication.   Note that $\maR^E_F$ is well defined, leafwise smooth, and that it is globally bounded because of our assumption that $\nabla$ is of bounded geometry.   
The operators $D_L^E$  and $\maR^E_F$ are related by the general leafwise Bochner Identity, \cite{LM}, which is

\begin{Equation}\label{basic}
$
\hspace{4.9cm} (D_L^E)^2 \,\, = \,\, \nabla^* \nabla \,\, + \,\,  \mathcal{R}^{E}_F.
$ 
\end{Equation}

\medskip

This is the main result of this section.
\begin{theorem}\label{measPR}   Suppose the curvature operator $\maR_F^E$ is uniformly positive near infinity, that is, there is a compact subset $\cK \subset M$  and $\kappa_0 = \sup \{ \kappa \in \R \, | \, \maR^E_F -\kappa \Id \geq 0 \, \text{on} \, M\smallsetminus \cK\}$ is positive.  Then for $0 \leq \ep < \kappa_0$, $\Dim_{\Lam}(\Im(P_{[0,\ep]}))$  is finite.   More precisely,
$$
0 \; \leq \; \Dim_{\Lam}(\Im(P_{[0,\ep]}))   \,\, \leq\,\, \,\, \frac{(\kappa_0 - \kappa_1)}{(\kappa_0 - \ep)}  \int_{\cK}  \tr(k_{[0,\ep]}(x, x))  \,  d\mu \,\, < \,\, \infty,   
$$
where $\kappa_1  = \sup \{ \kappa \in \R \, | \, \maR^E_F -\kappa \Id \geq 0\, \text{on} \, M\}$.  In addition, Assumption \ref{Assume} holds.
\end{theorem}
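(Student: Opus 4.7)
The plan is to exploit the leafwise Bochner identity \eqref{basic} applied to sections in the range of $P_{[0,\ep]}$, in order to bound the ``$\Lam$-mass'' of $P_{[0,\ep]}$ outside $\cK$ by its mass inside $\cK$. Concretely, for $\psi \in L^2(L,E|_L)$ on a leaf $L$, set $\phi = P_{[0,\ep]}\psi$. Because the leaves are complete and $D_L^E$ is essentially self-adjoint, $\phi$ is smooth and, via a standard cutoff argument on $L$ (using bounded geometry and completeness), $\nabla\phi \in L^2$ with $\|\nabla\phi\|^2 = \langle \nabla^*\nabla\phi,\phi\rangle$. Combining this with the spectral constraint $\langle (D_L^E)^2\phi,\phi\rangle \leq \ep\|\phi\|^2$ and \eqref{basic} yields the leafwise operator inequality
$$
P_{[0,\ep]}\,\maR_F^E\,P_{[0,\ep]} \;\leq\; \ep\, P_{[0,\ep]}.
$$

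Next, the hypothesis $\maR_F^E \geq \kappa_0\Id$ on $M\ssm\cK$ together with the definition of $\kappa_1$ gives, as multiplication operators,
$$
\maR_F^E \;\geq\; \kappa_0\Id \,-\,(\kappa_0-\kappa_1)\chi_\cK.
$$
Sandwiching by $P_{[0,\ep]}$ and using $P_{[0,\ep]}^2 = P_{[0,\ep]}$ yields
$$
(\kappa_0 - \ep)\,P_{[0,\ep]} \;\leq\; (\kappa_0 - \kappa_1)\,P_{[0,\ep]}\,\chi_\cK\,P_{[0,\ep]}.
$$
Taking the pointwise trace of Schwartz kernels on the diagonal and integrating against $d\mu$, a direct computation (Fubini leaf by leaf, using self-adjointness $k_{P_{[0,\ep]}}(z,x) = k_{P_{[0,\ep]}}(x,z)^*$ and $P_{[0,\ep]}^2 = P_{[0,\ep]}$) produces the identity
$$
\int_M \tr\bigl(k_{P_{[0,\ep]}\chi_\cK P_{[0,\ep]}}(x,x)\bigr)\,d\mu \;=\; \int_\cK \tr\bigl(k_{P_{[0,\ep]}}(z,z)\bigr)\,d\mu,
$$
which combined with the previous inequality gives the stated bound on $\Dim_\Lam(\Im(P_{[0,\ep]}))$. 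Finiteness of the right-hand side comes from bounded geometry: for any $t>0$ the spectral calculus gives $P_{[0,\ep]} \leq e^{t\ep}\,e^{-t(D_L^E)^2}$, so $\tr(k_{P_{[0,\ep]}}(x,x))$ is dominated by $e^{t\ep}\tr(k_{e^{-t(D_L^E)^2}}(x,x))$, which is uniformly bounded on $M$; since $\cK$ is compact, it has finite $\mu$-measure, and the integral is finite.

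For Assumption \ref{Assume}, recall $s_0$ is chosen so that $M(s_0) \subset M\ssm\cK$, where $\maR_F^E \geq \kappa_0 \Id > 0$. Applying Bochner to compactly supported smooth sections $\phi$ in $M(s_0)$ gives $\|\what{D}_L^E\phi\|^2 = \|\nabla\phi\|^2 + \langle\maR_F^E\phi,\phi\rangle \geq \kappa_0\|\phi\|^2$, so the minimal closed extension of $\what{D}_L^E$ has trivial kernel, i.e. $P_0(s_0) = 0$. Hence the integrand in Assumption \ref{Assume} is identically zero and the assumption is satisfied.

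The main technical obstacle is the rigorous justification of the leafwise operator inequality $P_{[0,\ep]}\maR_F^E P_{[0,\ep]} \leq \ep P_{[0,\ep]}$: one must verify, using completeness of each leaf and bounded geometry, that for all $\phi$ in the range of $P_{[0,\ep]}$ the formal integration by parts $\langle (D_L^E)^2\phi,\phi\rangle = \|\nabla\phi\|^2 + \langle \maR_F^E\phi,\phi\rangle$ genuinely holds on $L^2$, without boundary contributions. A secondary subtlety, bypassed via the direct kernel computation above, is the would-be cyclic identity $\tau_\Lam(P_{[0,\ep]}\chi_\cK P_{[0,\ep]}) = \tau_\Lam(\chi_\cK P_{[0,\ep]})$, which has to be established by Fubini leafwise rather than formally invoked, since a priori integrability of the intermediate kernels on $M$ is exactly what we are trying to prove.
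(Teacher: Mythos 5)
Your argument for the bound on $\Dim_\Lam(\Im P_{[0,\ep]})$ is a correct and somewhat cleaner reformulation of what the paper does: you package the Bochner-plus-spectral estimate as the operator inequality $P_{[0,\ep]}\maR_F^E P_{[0,\ep]} \leq \ep P_{[0,\ep]}$, then convert the estimate outside $\cK$ to one over $\cK$ via cyclicity of the $\Lam$-trace, whereas the paper works directly with the diagonal Schwartz kernels, first proving finiteness of the three relevant integrals by contradiction (their Lemmas \ref{finiteint}, \ref{techlem1}, \ref{techlem2} and the manipulations that follow). Your cyclicity identity is exactly $\tau_\Lam(A^*A) = \tau_\Lam(AA^*)$ with $A = \chi_\cK P_{[0,\ep]}$, which holds for normal semifinite traces on positive operators, so that step is sound once set up carefully, and the leafwise integration by parts you flag is indeed the main thing to verify; both can be done using bounded geometry and completeness of the leaves. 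So this part is essentially right, just by a different route.

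However, your treatment of Assumption \ref{Assume} has a genuine gap. You conclude that $P_0(s_0)=0$ because $\|\what D_L^E\phi\|^2 \geq \kappa_0\|\phi\|^2$ for $\phi$ compactly supported in $M(s_0)$, so the \emph{minimal} closed extension has trivial kernel. But $L\cap M(s_0)$ is a manifold with boundary (it is not complete), and $P_0(s_0)$ is the projection onto the kernel of the operator actually used in the rest of the argument, which is the \emph{distributional} (maximal) kernel: in the proof of Proposition \ref{AltLemma} the authors need $\Ker(\what D_L^E)$ to contain sections of the form $(\what G^\ep - \what G'^\ep)\sigma$, which certainly do not vanish near $\partial M(s_0)$ and are not in the domain of the minimal extension. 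For the maximal operator the Bochner formula acquires a boundary contribution and your estimate gives no information; indeed this is why the paper's own verification of Assumption \ref{Assume} proves only that $\int_{M(s)} k_{P_0(s_0)}\langle x,x\rangle\, d\mu$ is \emph{finite}, not zero, via a cutoff function $f$ supported in $M(s')$ and a careful tracking of the divergence term $\div_F(V_{k,f})$ (see Lemma \ref{prlem3} and the computation that follows it). You would need to replace your one-line conclusion with an argument of that type.
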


Prime examples are spin foliations which admit leafwise metrics with PSC off a compact subset of $M$, with $E=\maS$ the spinor bundle associated to $TF$, or more generally, its tensor product $E=\maS\otimes E_0$ by any Hermitian bundle $E_0$ which is leafwise flat near infinity. In fact, all that is needed   is that $E_0$ defines a $K$-theory class which is leafwise almost flat near infinity.

When the $\Lambda$-dimension of $P_0$ is finite, we denote by $\Ind_\Lambda (D_L^E)$ the well defined measured index given by
$$
\Ind_\Lambda (D_L^E) \; = \; \int_M \tr_s (k_{P_0} (x, x)) d\mu,
$$
and similarly for $\Ind_{\Lambda'} (D_L^{E'})$. 

Immediate corollaries of Theorems \ref{Firstmain},  \ref{mainSection4}, and  \ref{measPR}  are the following.

\begin{theorem}\label{IMCOR1}
Suppose that  $(M, F, \cK)$ and $(M', F',\cK')$ are compatible foliations as defined at the beginning of Section \ref{RelInd}.  Suppose further that  $\maR_F^E$ is uniformly positive on $M\smallsetminus \maK$, so also  $\maR_{F'}^{E'}$ is uniformly positive on $M'\smallsetminus \maK'$.  Then 
$$
\Ind_{\Lam, \Lam'}(D_L^E, D_L^{E'}) \; = \; \Ind_\Lambda (D_L^E) - \Ind_{\Lambda'} (D_L^{E'}) \;  =  \;
\int_{\cK} \AS (D_L ^E)_L  \, d\Lam  -   \int_{\cK'} \AS (D_L ^{E'})_L  \, d\Lam'.
$$
\end{theorem}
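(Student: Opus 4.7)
The plan is to assemble Theorems \ref{Firstmain}, \ref{mainSection4}, and \ref{measPR} into a short chain of implications; no new analysis is required.

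First, I would invoke Theorem \ref{measPR} for both $(M,F,E)$ and $(M',F',E')$. The hypothesis that $\maR^E_F \geq \kappa_0 \Id$ on $M\ssm\cK$ (and similarly on $M'\ssm\cK'$ with some $\kappa_0'>0$; one may take a common positive constant by shrinking) produces $\ep_0>0$ such that $\Dim_\Lam(\Im P_{[0,\ep_0]})<\infty$ and $\Dim_{\Lam'}(\Im P'_{[0,\ep_0]})<\infty$. The same theorem also asserts that Assumption \ref{Assume} holds for each side. Thus the entire hypothesis package of Theorem \ref{mainSection4} is verified.

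Second, since $P_0 \leq P_{[0,\ep_0]}$ as projections and since $\tr(k_{P_0}(x,x))$ and $\tr(k_{P_{[0,\ep_0]}}(x,x))$ are pointwise nonnegative (being local traces of projections with smooth kernels), integration against $d\mu$ yields
\[
0 \leq \Dim_\Lam(\Im P_0) \leq \Dim_\Lam(\Im P_{[0,\ep_0]}) < \infty,
\]
and likewise on the primed side. Therefore the measured indices $\Ind_\Lam(D_L^E)=\int_M \tr_s(k_{P_0}(x,x))\,d\mu$ and $\Ind_{\Lam'}(D_L^{E'})=\int_{M'} \tr_s(k_{P_0'}(x,x))\,d\mu'$ are well defined.

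Third, I would apply the ``$\ep=0$'' (second) equation of Theorem \ref{mainSection4}, which gives
\[
\Ind_{\Lam,\Lam'}(D_L^E, D_L^{E'}) \;=\; \int_M \tr_s(k_{P_0}(x,x))\,d\mu \;-\; \int_{M'} \tr_s(k_{P_0'}(x,x))\,d\mu' \;=\; \Ind_\Lam(D_L^E) - \Ind_{\Lam'}(D_L^{E'}),
\]
establishing the first claimed equality. The second equality is then exactly the conclusion of Theorem \ref{Firstmain}, which applies under the standing compatibility hypotheses on $(M,F,\cK,\Lam)$ and $(M',F',\cK',\Lam')$ and requires no positivity assumption.

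There is no genuine obstacle here; the statement is a packaging of three already-proven theorems. The only minor point to flag is that Theorem \ref{mainSection4}'s ``$\ep=0$'' clause is used, which the proof of that theorem justifies by a Dominated Convergence argument as $\ep\downarrow 0$ using the monotonicity of the integrands in $\ep$; this is already contained in Theorem \ref{mainSection4} itself and needs no re-derivation.
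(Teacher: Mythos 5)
Your proposal is correct and follows exactly the route the paper intends: the paper explicitly presents Theorem \ref{IMCOR1} as an immediate corollary of Theorems \ref{Firstmain}, \ref{mainSection4}, and \ref{measPR}, and you have spelled out precisely how those three results chain together, including the needed observation that $\Dim_\Lam(\Im P_0)\leq\Dim_\Lam(\Im P_{[0,\ep_0]})<\infty$ so that $\Ind_\Lam(D_L^E)$ is well defined. No gaps.
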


\begin{theorem}\label{IMCOR2}
Suppose that $(M,F)$ is a foliated manifold as explained in the Introduction.  Suppose further that $E$ and $E'$ are two Clifford bundles over $M$ which are isomorphic off some compact subspace $\cK$ of $M$, and that $\cR^E_F$, so also  $\maR_{F}^{E'}$,  is uniformly positive on $M\smallsetminus \maK$.
Then 
$$
\Ind_{\Lam, \Lam}(D_L^E, D_L^{E'})  \,\, = \,\,  \Ind_\Lambda (D_L^E) - \Ind_\Lambda (D_L^{E'})
 \,\, = \,\, 
 \int_M (\AS (D_L)(\ch(E) - \ch(E'))_L  \, d\mu.
$$
\end{theorem}
Note that  $\ch(E)  = \ch(E')$ off $\cK$. 

\medskip

Returning to Theorem \ref{measPR}., note that $\kappa_1  \leq \kappa_0$, and since  $\cK$ is compact and $\maR^E_F$ is bounded, $\kappa_1$ is automatically finite.   Note also, that $P^L_0 = P_0 |_L$, so also $P^L_{[0, \ep]} = P_{[0, \ep]} |_L$, is not necessarily of trace class in the classical sense.  If a leaf $L$ passes through $\cK$ an infinite number of times, the classical trace of $P^L_0$ may be infinite.   However, if it passes through $\cK$ only a finite number of times, its classical trace must be finite by the result of Gromov and Lawson, Theorem 3.2 of \cite{GL3}. 

In the case where $\kappa_1  > 0$, we get an even stronger result.
\begin{proposition}  \label{measPR2} Suppose that $\kappa_1  > 0$.   Then for $0 \leq \ep < \kappa_1$,  $P_{[0,\ep]} = 0$.  
\end{proposition}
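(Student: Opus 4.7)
The plan is to combine the leafwise Bochner identity with the definition of $\kappa_1$ and the spectral theorem. Since the positivity condition $\maR^E_F - \kappa \Id \geq 0$ (as pointwise operators on $E$) is preserved under increasing limits of $\kappa$, the supremum in the definition of $\kappa_1$ is actually attained: $\maR^E_F \geq \kappa_1 \Id$ everywhere on $M$ as a pointwise inequality of self-adjoint endomorphisms of $E$.

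First, I would apply the leafwise Bochner identity \eqref{basic} to an arbitrary $\varphi \in C^\infty_c(E_L)$ on a leaf $L$, yielding
$$
\lan (D_L^E)^2 \varphi, \varphi \ran_{L^2(E_L)} \; = \; \|\nabla \varphi\|^2 \, + \, \lan \maR^E_F \varphi, \varphi \ran_{L^2(E_L)} \; \geq \; \kappa_1 \|\varphi\|^2_{L^2(E_L)},
$$
using $\|\nabla \varphi\|^2 \geq 0$ and the pointwise bound $\maR^E_F \geq \kappa_1 \Id$. Since the leaves are complete, $D_L^E$ is essentially self-adjoint on $C^\infty_c(E_L)$ (\cite{Ch73}), so $(D_L^E)^2$ is essentially self-adjoint on the same domain. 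The quadratic form inequality thus extends by density/closure to the entire form domain of $(D_L^E)^2$, giving the operator inequality $(D_L^E)^2 \geq \kappa_1 \Id$ on $L^2(E_L)$ for each leaf $L$.

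Next, by the spectral theorem, the operator inequality $(D_L^E)^2 \geq \kappa_1 \Id$ on each $L^2(E_L)$ is equivalent to $\Spec((D_L^E)^2 |_L) \subset [\kappa_1, \infty)$. Hence, for any $0 \leq \ep < \kappa_1$, the spectral projection on each leaf satisfies $\chi_{[0,\ep]}((D_L^E)^2 |_L) = 0$. Since $P_{[0,\ep]}$ is by definition the leafwise spectral projection $\chi_{[0,\ep]}((D_L^E)^2)$, this gives $P_{[0,\ep]} = 0$, as required.

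There is really no genuine obstacle to overcome here; the only subtle point is confirming that $\maR^E_F \geq \kappa_1 \Id$ holds for the supremum value $\kappa_1$ itself (not merely $\kappa_1 - \delta$), which follows from the fact that the cone of positive bounded self-adjoint endomorphisms is closed under pointwise norm limits in $\kappa$. Once that is in place, everything reduces to standard spectral theory applied leafwise.
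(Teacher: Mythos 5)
Your proof is correct and rests on the same two ingredients the paper uses: the leafwise Bochner identity and the pointwise bound $\maR^E_F \geq \kappa_1 \Id$. The paper phrases it as a contradiction (taking a unit-norm $\sigma_L \in \Im(P_{[0,\ep]})$ and deducing $\ep \geq \langle (D_L^E)^2 \sigma_L, \sigma_L\rangle \geq \kappa_1$), whereas you run the same quadratic-form estimate forwards to conclude $\Spec((D_L^E)^2|_L) \subset [\kappa_1,\infty)$; this is essentially the same argument, though your version is a bit more careful about passing from $C^\infty_c(E_L)$ to the full form domain via essential self-adjointness.
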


\begin{proof} 
Suppose that $P_{[0,\ep]} \neq 0$ for some $0 \leq \ep <  \kappa_1$.  Let  $\sigma\neq 0$ be in the image of $P_{[0,\ep]} $.
Then there is a leaf $L \subset M$ so that $\sigma_L = \sigma|_L \neq 0$ on $L$.  We may assume that the 
 $L^2$ norm $||\sigma_L ||$ of  $\sigma_L$ is $1$. 
Then, since the operator $(\ep-(D_L^E)^2) P_{[0, \ep]}^L$ is non-negative, we have
$$
\ep \,\, \geq \,\,\langle (D_L^E)^2 \sigma_L, \sigma_L\rangle\, = \, 
\int_L \langle \nabla^* \nabla \sigma_L , \sigma_L \rangle  \, dx_F \,\, + \,\, 
\int_L \langle \maR^E_F\sigma_L, \sigma_L \rangle \, dx_F  \,\, = \,\, 
$$
$$
\int_L ||\nabla\sigma_L||^2 \, dx_F\,\, + \,\, 
\int_L \langle \maR^E_F \sigma_L, \sigma_L \rangle \, dx_F
\,\, \geq  \,\, 
\int_L \langle \maR^E_F \sigma_L, \sigma_L \rangle \, dx_F   \,\, \geq  \,\, \kappa_1,
$$
an obvious contradiction.  
\end{proof}

We begin the proof of Theorem \ref{measPR} with some lemmas.
\begin{lemma} \label{finiteint}
For $0 \leq \ep <  \infty$, 
$  
\dd    0 \,\, \leq \,\,  \int_{\cK}  k_{P_{[0,\ep]}}\lan x,x \ran  \, d\mu   \,\, < \,\, \infty.
$
\end{lemma}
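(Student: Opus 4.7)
The non-negativity was already noted right before Lemma \ref{MeasurableP} (the diagonal of the kernel of a positive leafwise smoothing operator is a positive endomorphism), so only the finiteness requires work. My plan is to dominate $P_{[0,\ep]}$ pointwise on the diagonal by a leafwise heat kernel trace at some fixed time $t>0$, and then to use compactness of $\cK$.

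The key ingredient is the elementary scalar inequality
$$
0 \;\leq\; \chi_{[0,\ep]}(z^2) \;\leq\; e^{t\ep}\,e^{-tz^2},\qquad z\in\R,
$$
which holds for every $t>0$: on $\{z^2\leq \ep\}$ one has $e^{-tz^2}\geq e^{-t\ep}$, and on $\{z^2>\ep\}$ the left-hand side vanishes. Applying the functional calculus to $D_L^E$ turns this into the operator inequality $0 \leq P_{[0,\ep]} \leq e^{t\ep}\,e^{-t(D_L^E)^2}$, whose difference is a positive leafwise smoothing operator (both summands are). For any such operator the diagonal of its kernel is a positive endomorphism of $E_x$ at each point; this is exactly the reasoning used in the proof of Lemma \ref{eubf}, taking $P=\Id$ there. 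Taking traces yields the pointwise bound
$$
0 \;\leq\; \tr\bigl(k_{P_{[0,\ep]}}(x,x)\bigr) \;\leq\; e^{t\ep}\,\tr\bigl(k_{e^{-t(D_L^E)^2}}(x,x)\bigr), \qquad x\in M.
$$

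The discussion following Theorem \ref{SCF&D1} in Section \ref{LDOs} guarantees, by bounded geometry, that $\tr\bigl(k_{e^{-t(D_L^E)^2}}(x,x)\bigr)$ is uniformly bounded on $M$ for each fixed $t>0$; call the resulting bound $C_\ep$. Now $\cK$ is compact, so it meets only finitely many charts of the good cover $\cU$, each with uniformly bounded leafwise plaque volume, and the associated (compact) transversal pieces carry finite $\Lam$-mass by the $\sigma$-finiteness property recalled in Section \ref{RelInd}. Hence $\mu(\cK)<\infty$, and integrating the displayed pointwise inequality over $\cK$ delivers the desired bound by $e^{t\ep}\,C_\ep\,\mu(\cK)$.

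The only delicate point in this plan is the passage from operator positivity to pointwise positivity of the diagonal of the kernel; this is routine and is already invoked inside Lemma \ref{eubf}, so I do not anticipate a real obstacle. The freedom in the choice of $t>0$ plays no role here, although choosing $t$ small keeps the constant $e^{t\ep}$ close to $1$ if one later wants a sharper estimate.
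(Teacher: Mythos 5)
Your proof is correct, but it takes a genuinely different route from the paper's. Both arguments ultimately establish that the pointwise trace $\tr\bigl(k_{P_{[0,\ep]}}(x,x)\bigr)$ is uniformly bounded on $M$ and then integrate over the compact set $\cK$; the difference lies in how the uniform bound is produced. The paper obtains it directly from the Spectral Mapping Theorem and bounded geometry: it writes $|\langle k_{P_{[0,\ep]}}(x,x)(v),u\rangle| \leq \|\chi_{[0,\ep]}((D_L^E)^2)\|_{-k,k}\,\|\delta_x^v\|_{-k}\,\|\delta_x^u\|_{-k}$, noting that both the $(-k,k)$-operator norm (finite because $\chi_{[0,\ep]}$ is bounded) and the Sobolev norms of the delta sections (bounded by bounded geometry) are uniform over $M$. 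You instead dominate $P_{[0,\ep]}$ by $e^{t\ep}e^{-t(D_L^E)^2}$ through the scalar inequality $\chi_{[0,\ep]}(z^2)\leq e^{t\ep}e^{-tz^2}$ and the positivity-preserving property of the Borel functional calculus, and then appeal to the already-established uniform bound on the heat kernel trace (the discussion following Theorem~\ref{SCF&D1}). Your route is a little shorter given what is already in the paper, and it makes the positivity of the diagonal transparent; the paper's approach is more self-contained and its Sobolev argument is reused elsewhere in the section. One small inaccuracy: the reference to Lemma~\ref{eubf} ``taking $P=\Id$'' is not literally admissible since that lemma requires $P$ to have a smooth Schwartz kernel. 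What you actually need is the standard fact (proved by the same pairing-with-$\delta_x^v$ argument as in that lemma's proof) that a nonnegative leafwise smoothing operator has a pointwise positive semidefinite diagonal kernel. That patches the step cleanly.
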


\begin{proof}
The first inequality is because $k_{P_{[0,\ep]}}\lan x,x \ran \geq 0$. 

Our bounded geometry assumption implies that for each leaf $L$ and $k \in \Z$, the Sobolev space $\maH^k(E_L)$  is the completion of $C^{\infty}_c(E_L)$ in the norm
$$
||\sigma_L||_k \,\, = \,\, ||(1 + (D_L^E)^2)^{k/2} \sigma_L ||_0
$$
where $|| \cdot ||_0$  is the $L^2$ norm on $C^{\infty}_c(E_L)$.  If $A: \maH^j (E_L) \to \maH^k(E_L)$ is a bounded operator, its operator norm is denoted $||A||_{j,k}$. 
The Spectral Mapping Theorem says that for any bounded Borel function $g$ on $\R$, 
$$
||g(D_L^E)||_{j,k} \,\,  = \,\,   ||(1 + (D_L^E)^2)^{(k-j)/2} g(D_L^E)||_{0,0}   \,\, \leq \,\,   \sup_{x \in \R} \, (1 + x^2)^{(k-j)/2} |g(x)|.
$$  
It is thus immediate that $||P_{[0,\ep]}^L||_{j,k}$ is finite for all $j,k$, so standard arguments in Sobolev theory give that $P_{[0,\ep]}^L$ is a smoothing operator on each leaf $L$.  So, $ k_{P_{[0,\ep]}}\lan x,x \ran $ is  leafwise smooth.

Given a vector $u$ of unit length  in the fiber of $E_L$ at $x$, define the Dirac delta section $\delta^u_x$  of $E_L$  by 
$$
\langle \delta^u_x, \sigma_L \rangle \,\, = \,\,  \langle u, \sigma_L(x) \rangle. 
$$
Bounded geometry also implies that the Sobolev norms $||\delta^u_x||_{-k}$ are bounded for $x \in M$ and  $k$ large enough.   Then we have 
$$ 
|\langle k_{P_{[0,\ep]}}(x,x)(v), u \rangle| \,\, = \,\, |\langle P_{[0,\ep]}(\delta^v_x) , \delta^u_x \rangle | \,\, \leq \,\, ||\chi_{[0,\ep]}((D_L^E)^2)||_{-k,k} ||\delta^v_x||_{-k}  ||\delta^u_x||_{-k}  
$$ 
is uniformly bounded on $M$.   Since $\cK$ is  compact, we are done. \end{proof}

The proof of Theorem \ref{measPR}  involves applying the leafwise Bochner identity to $k_{P_{[0,\ep]}}$.  When we apply an operator to the first variable, we will indicate that by the subscript $(1)$, and for the second variable by the subscript $(2)$.  For example,
$\nabla_{(1)}\nabla_{(2)} k_{P_{[0,\ep]}} \lan x,x \ran$ is shorthand for $\tr(\nabla^x\nabla^y k_{P_{[0,\ep]}} (x,y ) \, |_{y=x})$.    
\medskip

The kernel $k_{P_{[0,\ep]}}(x,y) |_{L \times L} =  \sum_{i}\sigma^L_i(x) \otimes \sigma^L_i(y)$, where $\sigma^L_1, \sigma^L_2, ...$ is an orthonormal basis of $\Im(P_{[0,\ep]} |_{L \times L} )$, and  the expression on the right is independent of the choice of the basis. 
The Spectral Mapping Theorem gives that $(\ep  -  (D_L^E)^2) P^L_{[0,\ep]}$ is a non-negative operator, so we have immediately,
\begin{lemma} \label{techlem1} For $0 \leq \ep < \infty $, \hspace{0.04cm} $\dd 0 \,\, \leq \int_M  (\ep-  (D_L^E)^2)_{(1)}k_{P_{[0,\ep]}} \lan x, x  \ran  \, d\mu.
$
\end{lemma}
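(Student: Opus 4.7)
The plan is to establish the strictly stronger \emph{pointwise} inequality
$(\ep - (D_L^E)^2)_{(1)} k_{P_{[0,\ep]}}\langle x,x\rangle \geq 0$ at every $x \in M$,
from which non-negativity of the $d\mu$-integral follows by monotonicity.

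First I would set $A_L := (\ep - (D_L^E)^2) P^L_{[0,\ep]}$, so that the integrand is exactly $\tr(k_{A_L}(x,x))$. By the Spectral Mapping Theorem applied to the bounded Borel function $t \mapsto (\ep - t)\chi_{[0,\ep]}(t)$, the operator $A_L$ is bounded, self-adjoint, and non-negative on $L^2(E_L)$, with norm $\leq \ep$. The key step is then a functional-calculus factorization $A_L = B_L B_L^{\ast}$, taking $B_L := g(D_L^E)$ with $g(y) := \sqrt{\ep - y^2}\,\chi_{[0,\ep]}(y^2)$. Since $g$ is real-valued and $\|g\|_\infty \leq \sqrt{\ep}$, $B_L$ is self-adjoint and one verifies directly that $B_L^2 = g^2(D_L^E) = A_L$. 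By the Sobolev-boundedness argument already used in Lemma \ref{finiteint}, $B_L$ is leafwise smoothing with uniformly bounded leafwise smooth Schwartz kernel $k_{B_L}(x,z)$.

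Given smoothness of $k_{B_L}$, the composition identity
$$
k_{A_L}(x,x) \; = \; \int_L k_{B_L}(x,z)\, k_{B_L}(x,z)^{\ast}\, dz_F
$$
realizes $k_{A_L}(x,x)$ as a manifestly non-negative endomorphism of the fiber $E_x$; its fiberwise trace is therefore $\geq 0$ at every $x$. Integrating against $d\mu = dx_F\, d\Lam$ then gives the lemma.

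The only substantive point requiring care is the pointwise validity of the kernel composition formula (rather than merely in an $L^2$ sense); this follows under bounded geometry from the uniform estimate $\|\delta_x^v\|_{-\ell} \leq C$ on delta sections already exploited in Proposition \ref{ptwseconv}, so no new technique beyond what the paper has deployed is needed.
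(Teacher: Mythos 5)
Your proposal is correct and follows essentially the same route as the paper: both hinge on the Spectral Mapping Theorem giving non-negativity of the operator $(\ep - (D_L^E)^2)P^L_{[0,\ep]}$, and then on the pointwise non-negativity of the diagonal trace of a non-negative smoothing operator. The paper treats this last step as immediate (via the orthonormal basis expansion $k_{P_{[0,\ep]}}|_{L\times L}=\sum_i\sigma^L_i\otimes\sigma^L_i$ listed just before the lemma), while you make the same point explicit through the equivalent square-root factorization $A_L=B_L^2$ with $B_L=g(D_L^E)$; both are valid and the extra detail you supply is consistent with the paper's machinery.
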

 
The proof of the following is standard in the classical case, see \cite{LM}, p.\ 155. Its proof in the foliated case is a consequence of the holonomy invariance of $\Lambda$, and is given in the Appendix.
  
\begin{lemma}\label{techlem2}
For $0 \leq \ep < \infty$, \hspace{0.04cm}  $\dd  \int_M (\nabla^* \nabla)_{(1)} k_{P_{[0,\ep]}} \lan x,x \ran \, d\mu   \,\, = \,\, \int_M  \nabla_{(1)}  \nabla_{(2)}k_{P_{[0,\ep]}} \lan x,x \ran  \, d\mu. $
\end{lemma}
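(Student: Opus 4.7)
The plan is to adapt the classical integration-by-parts proof (\cite{LM}, p.~155) to the foliated setting, using the holonomy invariance of $\Lam$ to absorb the boundary contributions arising from a partition of unity on non-compact leaves.

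On each leaf $L$, I would choose an orthonormal basis $\{\sigma_j^L\}$ of $\Im(P_{[0,\ep]}^L)$, so that $k_{P_{[0,\ep]}}|_{L\times L}(x,y) = \sum_j \sigma_j^L(x)\otimes \sigma_j^L(y)$. Each $\sigma_j^L$ is smooth by elliptic regularity and lies in $\Dom(\nabla^*\nabla)$, since $\nabla^*\nabla\sigma_j^L = (D_L^E)^2\sigma_j^L - \maR_F^E\sigma_j^L\in L^2(E_L)$ by \eqref{basic} and boundedness of $\maR_F^E$. Summing the standard pointwise identity
\[
\langle \nabla^*\nabla\sigma,\sigma\rangle \;-\; |\nabla\sigma|^2 \;=\; d^*_F\alpha_\sigma, \qquad \alpha_\sigma(Y) := \langle \nabla_Y\sigma,\sigma\rangle,
\]
over $j$ yields the leafwise kernel identity
\[
(\nabla^*\nabla)_{(1)} k_{P_{[0,\ep]}}\lan x,x\ran \;-\; \nabla_{(1)}\nabla_{(2)} k_{P_{[0,\ep]}}\lan x,x\ran \;=\; d^*_F\alpha(x),
\]
where $\alpha$ is the leafwise 1-form $\alpha(x)(Y) := \nabla_{(1), Y} k_{P_{[0,\ep]}}\lan x,x\ran$. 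Uniform boundedness and leafwise smoothness of $\alpha$ (with bounded leafwise derivatives), together with the convergence of the series and its commutation with $\nabla$ and $d^*_F$, follow from the bounded geometry hypothesis and the Sobolev-type bounds used in the proof of Lemma~\ref{finiteint}.

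The remaining task is to show $\int_M d^*_F\alpha\, d\mu = 0$. I set $\eta := \iota_{\alpha^\#}\,dx_F$, the uniformly bounded leafwise $(p-1)$-form satisfying $d_F\eta = -(d^*_F\alpha)\,dx_F$, and choose a partition of unity $\{\phi_i\}$ subordinate to the good cover $\cU$. Because $\phi_i\eta$ has compact support in a single plaque of $V_i$, Stokes' theorem on each plaque $P_y$ gives $\int_{P_y} d_F(\phi_i\eta) = 0$, i.e., $\int_{P_y} \phi_i\, d_F\eta = -\int_{P_y} d\phi_i\wedge \eta$. Using the disintegration $d\mu = [\int_F \cdot\,dx_F]\,d\Lam$ and summing over $i$, which is locally finite at each point by bounded geometry, the identity $\sum_i d\phi_i = d(\sum_i\phi_i) = 0$ yields the desired vanishing pointwise; more intrinsically, the element $\int_F d_F\eta$ represents the zero class in $\cA^0_c(M/F)$ by the Haefliger calculus \cite{Ha80}, and its pairing with the holonomy-invariant transverse measure $\Lam$ consequently vanishes.

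The main obstacle is to make this final cancellation rigorous when neither side of the lemma is a priori finite and the series $\sum_i \int_{T_i}\int_{P_y} d\phi_i\wedge \eta\, dx_F\, d\Lam(y)$ may fail to converge absolutely. I circumvent this by working at the level of Haefliger classes before pairing with $\Lam$: uniform boundedness of $\eta$ and $d\phi_i$ together with the local finiteness of $\{\phi_i\}$ show that $\int_F d_F\eta$ lies in the subspace $W\subset \cA^0_c(T)$, and since $\Lam$ annihilates $W$ by holonomy invariance, the equality of the two integrals holds in the extended sense in $[0,\infty]$.
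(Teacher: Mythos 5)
Your argument is essentially the one in the paper: the authors also expand $k_{P_{[0,\varepsilon]}}|_{L\times L}=\sum_i\sigma_i^L\otimes\sigma_i^L$, sum the classical pointwise identity from \cite{LM}, p.~155, to obtain an exact leafwise divergence term ($\div_F(V_\varepsilon)\,dx_F=d_F(i_{V_\varepsilon}dx_F)$, i.e.\ your $-d_F^*\alpha\,dx_F$), and then invoke that a leafwise exact form integrates to zero in Haefliger cohomology so that its pairing with the holonomy--invariant $\Lambda$ vanishes. Your extra care about absolute convergence and the precise meaning of the Stokes/partition-of-unity cancellation (the cancellation is not pointwise on a single transversal but only after identifying transversals via holonomy, which is exactly what landing in $W$ encodes) is a welcome elaboration of what the paper dispatches as ``a standard result in foliation theory.''
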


Note that the function $\nabla_{(1)}  \nabla_{(2)}k_{P_{[0,\ep]}} \lan x,x \ran$ is non-negative, since 
$ \sum_{i} \lan \nabla\sigma^L_i(x), \nabla\sigma^L_i(x) \ran$  converges locally uniformly on each leaf  to $\nabla_{(1)}  \nabla_{(2)}k_{P_{[0,\ep]}} \lan x,x \ran$.    See \cite{A76}, and the proof of Lemma \ref{prlem3} below.

\medskip

\noindent
{\em{Proof of Theorem \ref{measPR}.}}  Assuming that $0 \leq \ep <\kappa_0$, and using Lemmas \ref{techlem1} and \ref{techlem2}, we have
$$
0 \,\, \leq \int_M  (\ep-  (D_L^E)^2)_{(1)} k_{P_{[0,\ep]}} \lan x, x  \ran  \, d\mu \,\, = \,\,
\int_M -  \nabla_{(1)}\nabla_{(2)} k_{P_{[0,\ep]}}\lan x, x  \ran + (\ep - \mathcal{R}^E_F)_{(1)}   k_{P_{[0,\ep]}}\lan x, x  \ran \, d\mu.
$$
Suppose that the non-positive integral
$
\dd \int_M - \nabla_{(1)}  \nabla_{(2)}k_{P_{[0,\ep]}} \lan x,x \ran   \,  d\mu   =    - \infty.
$
By Lemma \ref{finiteint} and  because $\cK$ is compact, $\maR^E_F$ is bounded and $\maR^E_F \geq \kappa_1 \Id$ on $\cK$, 
\begin{Equation}\label{formula4}
$\dd  \hspace{2.0cm} 
-\infty  \,\, < \,\, \int_{\cK}   (\ep -  \mathcal{R}^E_F)_{(1)} k_{P_{[0,\ep]}} \lan x,x \ran  \, d\mu     \,\, \leq \,\
   (\ep -  \kappa_1) \int_{\cK} k_{P_{[0,\ep]}} \lan x,x \ran \, d\mu  \,\, < \,\,  \infty. 
$
\end{Equation}
\noi Since $\maR^E_F \geq  \kappa_0\Id  > 0$ on $M \ssm \cK$, the operator $\ep -  \mathcal{R}^E_F$ is non positive on $M \ssm \cK$, so  
 $(\ep -  \mathcal{R}^E_F)_{(1)} k_{P_{[0,\ep]}} \lan x,x \ran$ is also non positive on $M \ssm \cK$, and 
$$  
\int_{M \ssm \cK} (\ep- \mathcal{R}^E_F)_{(1)}k_{P_{[0,\ep]}} \lan x,x \ran  \, d\mu\,\, \leq\,\, 0.
$$
Thus
$$
0 \,\, \leq \,\, 
\int_M -  \nabla_{(1)}\nabla_{(2)} k_{P_{[0,\ep]}}\lan x, x  \ran + (\ep- \mathcal{R}^E_F)_{(1)}  k_{P_{[0,\ep]}}\lan x, x  \ran \, d\mu  \,\, = \,\,
$$
$$
\int_M - \nabla_{(1)}  \nabla_{(2)}k_{P_{[0,\ep]}} \lan x,x \ran    \, d\mu   \,\, + \,\, 
\int_{M \ssm \cK}  (\ep- \mathcal{R}^E_F)_{(1)}  k_{P_{[0,\ep]}}\lan x, x  \ran   \, d\mu  \,\, + \,\, 
\int_{\cK}   (\ep-\mathcal{R}^E_F)_{(1)}  k_{P_{[0,\ep]}}\lan x, x  \ran  \, d\mu \,\, \leq \,\,
$$
$$ 
\int_M  -\nabla_{(1)}  \nabla_{(2)}k_{P_{[0,\ep]}} \lan x,x \ran    \, d\mu   \,\, + \,\, 
\vert \int_{\cK}  (\ep - \mathcal{R}^E_F)_{(1)}k_{P_{[0,\ep]}} \lan x,x \ran  \, d\mu \vert
 \,\, = \,\, - \infty,
$$
a contradiction.  So, 
$$
-\infty \,\, < \,\, \int_M - \nabla_{(1)}  \nabla_{(2)}k_{P_{[0,\ep]}} \lan x,x \ran   \,  d\mu \,\, \leq \,\, 0.
$$

Similarly, assuming that 
$$  
\int_{M \ssm \cK} (\ep-  \mathcal{R}^E_F)_{(1)}k_{P_{[0,\ep]}} \lan x,x \ran  \, d\mu\,\, = \,\, -\infty,
$$
leads to  a contradiction.  Thus 
$
\dd -\infty  \,\, < \,\,  \int_{M \ssm \cK} (\ep-  \mathcal{R}^E_F)_{(1)}k_{P_{[0,\ep]}} \lan x,x \ran  \, d\mu \,\, \leq \,\,  0,
$
and we have
\begin{Equation}\label{formula5}
$\dd
\hspace{2.5cm}  0 \,\, \leq \,\, \int_M  \hspace{-0.2cm} (- \nabla_{(1)}  \nabla_{(2)} + (\ep-\mathcal{R}^E_F)_{(1)})k_{P_{[0,\ep]}} \lan x,x \ran \, d\mu    \,\, \leq \,\,$ \\
$$\int_{M \ssm \cK}  \hspace{-0.5cm} (\ep-\mathcal{R}^E_F)_{(1)}  k_{P_{[0,\ep]}}\lan x, x  \ran \, d\mu   \,\, + \,\,  
\int_{\cK}   (\ep-\mathcal{R}^E_F)_{(1)}  k_{P_{[0,\ep]}}\lan x, x  \ran  \, d\mu,
$$
\end{Equation}
\noindent and all of the integrals are finite.

\medskip
 
Again since  $\maR^E_F \geq \kappa_0 \Id  > 0$ on $M \ssm \cK$, we have
$$
\int_{M \ssm\cK}   (\ep -  \mathcal{R}^E_F)_{(1)} k_{P_{[0,\ep]}} \lan x,x \ran  \, d\mu   \,\, \leq  \,\, 
(\ep - \kappa_0) \int_{M \ssm\cK} k_{P_{[0,\ep]}} \lan x,x \ran \, d\mu  \,\, \leq \,\,  0, 
$$
since $\dd \int_{M \ssm\cK} k_{P_{[0,\ep]}} \lan x,x \ran \, d\mu \,\, \geq \,\, 0$.

\medskip

Combining this result with Equations \ref{formula4} and \ref{formula5},  we get for $0 \leq  \ep  < \kappa_0$,
$$
0 \,\, \leq \,\,  
\int_{M \ssm \cK} (\ep -  \mathcal{R}^E_F)_{(1)}k_{P_{[0,\ep]}}\lan x, x  \ran \,  d\mu  \,\, + \,\, \int_{\cK}  (\ep -  \mathcal{R}^E_F)_{(1)}  k_{P_{[0,\ep]}}\lan x, x  \ran \,  d\mu            \,\, \leq \,\
$$
$$
(\ep - \kappa_0 )\int_{M \ssm \cK }   k_{P_{[0,\ep]}}\lan x, x  \ran \,  d\mu  \,\, + \,\,
(\ep - \kappa_1) \int_{\cK}   k_{P_{[0,\ep]}}\lan x, x  \ran \,  d\mu    \,\, = \,\, 
$$
$$
(\ep - \kappa_0 )\int_M k_{P_{[0,\ep]}}\lan x, x  \ran \,  d\mu    \,\, + \,\,
(\kappa_0 - \kappa_1)  \int_{\cK}  k_{P_{[0,\ep]}}\lan x, x  \ran \,  d\mu.    
$$
Thus for all $0 \leq \ep < \kappa_0$,
$$
0 \,\, \leq \,\, \Dim_{\Lam}(\Im(P_{[0,\ep]}))  \,\, = \,\,   \int_M k_{P_{[0,\ep]}}\lan x, x  \ran  \, d\mu   \,\, \leq\,\, \frac{(\kappa_0 - \kappa_1)}{(\kappa_0 - \ep)}  \int_{\cK}  k_{P_{[0,\ep]}}\lan x, x  \ran  \,  d\mu    \,\, <\,\,  \infty.
$$

To finish, we show that Assumption \ref{Assume} holds, 
namely 
\begin{lemm} For $s > s_0$,  \;
$\dd
\int_{M(s)}  \hspace{-0.2cm}k_{P_0(s_0)} \lan x,x \ran   \,  d\mu \;  <  \; \infty
$.
\end{lemm}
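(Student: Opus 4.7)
The plan is to apply the Bochner argument of Theorem \ref{measPR} directly to the operator $\what{D}_L^E$ on the open submanifold $M(s_0)$. The key observation is that $\cK \subset M \ssm M(s_0)$ implies $M(s_0) \subset M \ssm \cK$, so the hypothesis $\mathcal{R}_F^E \geq \kappa_0 \Id$ on $M \ssm \cK$ upgrades to the uniform bound $\mathcal{R}_F^E \geq \kappa_0 \Id$ on the entire domain $M(s_0)$ of $\what{D}_L^E$. Thus, when Theorem \ref{measPR} is transplanted to $(M(s_0), F|_{M(s_0)}, \what{D}_L^E)$, the role of the ``exceptional compact set'' is played by the empty set, and the bound one obtains collapses to $0$.

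Concretely, I would first check that $k_{P_0(s_0)}\lan x,x\ran$ is pointwise non-negative and leafwise smooth by the same Sobolev/bounded-geometry reasoning as in Lemma \ref{finiteint}, and note that $(\what{D}_L^E)^2 P_0(s_0) = 0$ gives $((\what{D}_L^E)^2)_{(1)} k_{P_0(s_0)}(x,y) = 0$ pointwise. Combining this with the Bochner identity on $M(s_0)$ together with the analogues of Lemmas \ref{techlem1} and \ref{techlem2} for $\what{D}_L^E$ yields
$$
0 \,\,=\,\, \int_{M(s_0)} \nabla_{(1)}\nabla_{(2)} k_{P_0(s_0)}\lan x,x\ran\,d\mu \,\,+\,\, \int_{M(s_0)} (\mathcal{R}_F^E)_{(1)} k_{P_0(s_0)}\lan x,x\ran\,d\mu.
$$
The first summand is non-negative (being a trace of a non-negative leafwise kernel), and the second is bounded below by $\kappa_0 \int_{M(s_0)} k_{P_0(s_0)}\lan x,x\ran\,d\mu \geq 0$. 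Both summands must therefore vanish; since $\kappa_0 > 0$, this forces $\int_{M(s_0)} k_{P_0(s_0)}\lan x,x\ran\,d\mu = 0$, so a fortiori $\int_{M(s)} k_{P_0(s_0)}\lan x,x\ran\,d\mu = 0 < \infty$ for every $s > s_0$.

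The main obstacle is the integration-by-parts identity (the analogue of Lemma \ref{techlem2} for $\what{D}_L^E$) on the open submanifold $M(s_0)$, whose leaves are no longer complete. One must first specify the self-adjoint extension of $D_L^E|_{M(s_0)}$ used to define $P_0(s_0)$ and then verify that the boundary contribution from $\pa M(s_0) = \{g = s_0\}$ does not spoil the sign. With the Dirichlet extension this is essentially automatic: sections in $\Im P_0(s_0)$ vanish on $\pa M(s_0)$ and the argument of the Appendix transfers verbatim once the holonomy invariance of $\Lam$ is used. For any other natural extension, the needed boundary estimate reduces to a leafwise Green's formula, and as soon as the boundary term is shown to be non-negative, exactly the same chain of inequalities closes and produces the same conclusion.
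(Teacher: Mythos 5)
The paper's $\hat{D}_L^E$ is not specified to be a Dirichlet/Friedrichs extension, and the argument you sketch hinges crucially on that choice. Your plan is to run the Bochner argument on $M(s_0)$ with the exceptional compact set replaced by $\emptyset$, concluding that both $\int \nabla_{(1)}\nabla_{(2)}k_{P_0(s_0)}\langle x,x\rangle\,d\mu$ and $\kappa_0\int k_{P_0(s_0)}\langle x,x\rangle\,d\mu$ vanish. But the paper's Lemma~\ref{techlem2} (and its proof in the Appendix) rests on the Haefliger-cohomology identity $\int_F d_F(i_V\,dx_F)=0$, which is a statement about a boundaryless foliated manifold; on $M(s_0)$ it produces instead a transversal-family integral of $\langle V_\epsilon,n\rangle$ over $\partial M(s_0)$. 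You acknowledge this, and assert it vanishes ``essentially automatically'' for the Dirichlet extension because sections of $\Im P_0(s_0)$ vanish on $\partial M(s_0)$. That is not automatic: the kernel sections lie a priori in the form domain, and ``vanish on the boundary'' is a trace-theorem statement; to get the claimed pointwise control on $V_\epsilon = \sum_i\langle\nabla\sigma_i,\sigma_i\rangle$ up to $\partial M(s_0)$ one also needs interior elliptic regularity and uniform Sobolev bounds up to the boundary for the $\sigma_i$, which you do not supply. For a maximal (or any non-Dirichlet) extension the boundary integral has no reason to vanish, and, correspondingly, $\Ker(\hat{D}_L^E)$ need not be trivial even when $\mathcal{R}^E_F \geq \kappa_0>0$ on $M(s_0)$ (the half-line model $D=d/dx$ on $[0,\infty)$ already illustrates this). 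Note also that the paper's own proof of Proposition~\ref{AltLemma} keeps $\Ker(\hat{D}_L^E)$ as an honest summand of $W$, which is a strong hint that the authors are not assuming the Dirichlet extension and do not expect $P_0(s_0)=0$; so your stronger conclusion, if it held, would trivialize parts of that argument in a way the paper does not use.

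The paper avoids every one of these issues by inserting a smooth cutoff $f$, supported in $M(s')$ with $f\equiv 1$ on $M(s)$. The identity $0=(f^2)_{(2)}\bigl((\nabla^*\nabla)_{(1)}+\mathcal{R}_{(1)}\bigr)(k)\langle x,x\rangle$ is then manipulated so that all boundary behavior is absorbed into a divergence term $\div_F(V_{k,f})$ that is compactly supported in the interior of $M(s_0)$ (where the Haefliger divergence theorem genuinely applies), and the price paid is a cross term $2(f\nabla)_{(1)}(\nabla f\otimes\bullet)_{(2)}(k)\langle x,x\rangle$ whose support sits in the relatively compact annulus $M(s')\setminus M(s)$. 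This yields the finite upper bound
\[
\kappa_0\int_{M(s)} k\langle x,x\rangle\,d\mu \;\leq\; C_{s'}\int_{M(s')\setminus M(s)}\bigl(\nabla_{(1)}\nabla_{(2)}k\langle x,x\rangle + k\langle x,x\rangle\bigr)\,d\mu \;<\;\infty,
\]
which is what the lemma actually claims, without needing to pin down a self-adjoint extension or prove a leafwise Green's formula. I would encourage you to rework your proof along those lines: introduce the cutoff, keep your non-negativity observations (they are correct and survive), and replace the ``boundary term vanishes for Dirichlet'' step with the cutoff computation, which is robust under any choice of extension and produces exactly the finiteness the paper needs.
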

Recall that $P_0(s_0)$ is the orthogonal projection onto  $\Ker(\what{D}_L^E)$,
where $\what{D}_L^E = D_L^E$ restricted to $M(s_0)$, as in the proof of Proposition \ref{AltLemma}.  For simplicity of notation we denote the leafwise Schwartz kernel $k_{P_0(s_0)}$ by $k$, and $\what{D}_L^E$ by $D$.

\begin{proof}
Let $s_0< s' < s$,  so $M(s')\smallsetminus M(s)$ is relatively compact.  Let  $f:M(s_0) \to [0, 1]$ be a smooth cutoff function, such that 
$$
f \, \vert_{M(s_0)\smallsetminus M(s')} \; = \; 0\; \text{ and }\; f \, \vert_{M(s)} \; = 1 \;.
$$
Since $(D^2)_{(1)} (k) \langle x, x\rangle  = 0$, we have
$$
0 \; = \; (f^2)_{(2)} \left((\nabla^*\nabla)_{(1)} + \maR_{(1)}\right) (k) \langle x, x\rangle.
$$
Since $\maR$ is a zero-th order differential operator,
$$
(f^2)_{(2)} \maR_{(1)} (k) \langle x, x \rangle = (\maR f)_{(1)} f_{(2)} (k)\langle x, x \rangle \; \geq \; \kappa_0 (f_{(1)}f_{(2)}) (k) \langle x, x \rangle.
$$
By Lemma \ref{prlem3},  there is a smooth leafwise vector field $V_{k, f}$,  so that
$$
(f^2)_{(2)} (\nabla^*\nabla)_{(1)} (k)\langle x, x \rangle = (\nabla^*\nabla)_{(1)} ((f^2)_{(2)} k) \langle x, x \rangle = \nabla_{(1)} \nabla_{(2)} ((f^2)_{(2)} (k)) \langle x, x \rangle - \div_F (V_{k, f}) (x).
$$
Moreover, $V_{k, f}$ is supported in $M(s')$, since $f_{(2)}(x,y) = f(y) = 0$ on  $M(s_0) \times (M(s_0) \ssm M(s'))$. 

Now,
\begin{eqnarray*}
\nabla_{(2)}( (f^2)_{(2)} (k) )\langle x, x \rangle & = & (\nabla f^2)_{(2)} (k) \langle x, x \rangle  + (f^2)_{(2)} \nabla_{(2)} (k) \langle x, x \rangle\\
&=& 2(f\nabla (f) \otimes \bullet)_{(2)} (k) \langle x, x\rangle + (f^2)_{(2)} \nabla_{(2)} (k) \langle x, x \rangle,
\end{eqnarray*}
where $\nabla (f) \otimes \bullet$ is Clifford multiplication by the one form $\nabla (f)$.  Applying $\nabla_{(1)}$ we get
$$
\nabla_{(1)} \nabla_{(2)} ((f^2)_{(2)} (k)) \langle x, x \rangle = 2 (f \nabla)_{(1)}(\nabla (f) \otimes \bullet)_{(2)} (k) \langle x, x\rangle + (f\nabla)_{(1)} (f\nabla)_{(2)} (k)\langle x, x \rangle.
$$
Combining these  computation shows that  on $M(s')$,

$$
0  \; \geq \;  \kappa_0 (f_{(1)}f_{(2)}) (k) \langle x, x \rangle  \; + \; 2 (f \nabla)_{(1)}(\nabla (f) \otimes \bullet)_{(2)} (k) \langle x, x\rangle  \; + \; 
$$
$$ (f\nabla)_{(1)} (f\nabla)_{(2)} (k)\langle x, x \rangle \;  - \; \div_F (V_{k, f})(x).
$$
This equation is in fact valid over all of $M$ since the RHS extends smoothly by zero off $M(s')$.

Both $k\langle x, x \rangle$  and $\nabla_{(1)}\nabla_{(2)} (k)\langle x, x \rangle$   are non-negative functions on 
$M(s_0)$ which coincide with the non-negative functions $(f_{(1)}f_{(2)}) (k) \langle x, x \rangle$ and $(f\nabla)_{(1)} (f\nabla)_{(2)} (k)\langle x, x \rangle$ on $M(s)$.  Thus we have
$$
\kappa_0 \int_{M(s)} k\langle x, x \rangle \, d\mu \; \leq  \; \kappa_0 \int_{M(s)} k\langle x, x \rangle\,  d\mu \; + \;  \int_{M(s)} \nabla_{(1)}\nabla_{(2)} (k)\langle x, x \rangle d\mu  \;  \leq 
$$
$$
\; \kappa_0 \int_{M} (f_{(1)}f_{(2)})  k\langle x, x \rangle\,  d\mu \; + \;  
\int_{M} (f\nabla)_{(1)} (f\nabla)_{(2)}  (k)\langle x, x \rangle d\mu  \;  \leq 
$$
$$
 \int_M \div_F (V_{k, f}) (x) \, d\mu + 2 \Big\vert \int_{M(s')\smallsetminus M(s)}  \hspace{-1.0cm}  (f\nabla)_{(1)} (\nabla (f)\otimes \bullet)_{(2)} (k) \langle x, x \rangle \Big\vert \, d\mu,
$$
since $\nabla (f)$ is supported in $M(s')\smallsetminus M(s)$.  
But, 
$$
\int_M \div_F (V_{k, f}) (x) \, d\mu = 0,
$$
see proof of Lemma  \ref{prlem3}.
Since $\vert f\vert \leq 1$ and $\nabla(f)$ is uniformly bounded (say by $C_{s'}$), being zero off  the relatively compact subspace $M(s')\smallsetminus M(s)$, we have
$$
0 \; \leq \; \kappa_0 \int_{M(s)} k\langle x, x \rangle \, d\mu \; \leq \; 2\Big\vert \int_{M(s')\smallsetminus M(s)}  \hspace{-1.0cm} (f\nabla)_{(1)} (\nabla (f)\otimes \bullet)_{(2)} (k) \langle x, x \rangle  \Big\vert \, d\mu \; \leq \;   C_{s'} \int_{M(s')\smallsetminus M(s)}  \hspace{-1.0cm} 2 \vert \nabla_{(1)} (k) \langle x, x\rangle \vert \, d\mu. 
$$

Write $k\lan x,x \ran =  \sum_{i} \lan \sigma^L_i(x), \sigma^L_i(x) \ran$, where $\sigma^L_1, \sigma^L_2, ...$ is a leafwise orthonormal basis of $\Im(P_0(s_0))$.  Then using the inequality
$
2\vert \lan \nabla(\sigma^L_i)(x),\sigma^L_i(x)\vert  \ran\; \leq \; \|\nabla(\sigma^L_i)(x)\|^2 \; + \; 
\|(\sigma^L_i(x)\|^2,
$
and summing over $i$, we get
$$
2\vert \nabla_{(1)} (k) \langle x, x\rangle\vert \; \leq \; \nabla_{(1)} \nabla_{(2)} (k) \langle x, x \rangle + k \langle x, x \rangle.
$$
Thus, 
$$
0 \; \leq \; \int_{M(s)} k\langle x, x \rangle \,  d\mu \; \leq \; \frac{C_{s'}}{ \kappa_0}\int_{M(s')\smallsetminus M(s)}  \hspace{-1.0cm} \nabla_{(1)}\nabla_{(2)} (k)\langle x, x \rangle + k\langle x, x \rangle  \, d\mu \: < \; \infty,
$$
since $\mu$ is a Borel measure (so finite on compacts) and $\nabla_{(1)}\nabla_{(2)} (k)\langle x, x \rangle \, + \,  k\langle x, x \rangle$ is  non-negative and bounded on $M(s')\smallsetminus M(s)$.
 \end{proof}
 This completes the proof of Theorem \ref{measPR}.
 \hfill $\Box $

\medskip

We devote the rest of this section to two corollaries of Theorem  \ref{measPR}.  In particular, we relate
our definition of the relative index to the cut-and-paste definition considered in Section 4 of \cite{GL3}.  We consider compatible foliations  $(M, F)$ and $(M', F')$ as defined at the beginning of Section \ref{RelInd}.   

For the first corollary, we say that the foliation $F$, and so also $F'$, is reflective if there is a compact hypersurface which is transverse to $F$, and which separates off the infinite part of $V$.   For simplicity we will assume that this submanifold is just $\pa \cK$, and similarly for $\pa \cK'$.  Then we can ``cut and paste" as in \cite{GL3}.  In particular, there is $\delta \in \R$, and a neighborhood  of $\pa \cK$ which is diffeomorphic to $\pa\cK \times [-\delta, \delta]$, and so that $F$ restricted to $\pa\cK \times [-\delta, \delta]$ has leaves of the form $(L \cap \pa\cK) \times[-\delta, \delta]$.  We may assume that the foliation preserving diffeomorphism $\varphi$ extends to $\pa\cK \times  [-\delta,\delta]$, and  that  $\varphi(\pa\cK \times [-\delta, \delta])$ is diffeomorphic to $\pa\cK' \times [-\delta, \delta]$, and that it has the same properties as $\pa\cK \times [-\delta, \delta]$.  Then we have the compact foliated manifold 
$$
\what{M}  \,\,= \,\, \cK \cup_{\what{\varphi}} \cK',
$$
where $\what{\varphi}:\pa\cK \times [-\delta , \delta] \to \pa\cK' \times [-\delta, \delta]$ is given   by   $\what{\varphi} (x,s) = \varphi(x,-s)$.  We change the orientation of $F'$ to the opposite of what it was originally.  The resulting foliation $F \cup_{\what{\varphi}} F'$ is denoted $\what{F}$.  Denote by $\pi: \pa\cK \times [-\delta, \delta] \to \pa\cK$ the projection and   note that  $E \, |_{ \pa\cK \times [-\delta, \delta]} \simeq \pi^*(E \, |_{\pa\cK})$, and  $TF \, |_{ \pa\cK \times [-\delta, \delta]} \simeq \pi^*(TF \, |_{\pa\cK})$.  (Note that    $\dim(TF \, |_{\pa\cK}) = \dim (TF)$, not $\dim (TF) -1 = \dim F \, |_{\pa\cK}$.)   We may assume that  $\nabla$ and $D_L^E$ are preserved under the maps $(x,s) \to (x,-s)$ and $E_{(x,s)} \to E_{(x,-s)}$. This implies that $D_L^E$ and $D_L^{E'}$ are identified under the  glueing map used in defining $\what{M}$ and the objects on it.  In addition, $\Lam$ and $\Lam'$ fit together, giving $\what{\Lam}$. 
Finally, denote the leafwise operator on $\what{F}$ by $\what{D}$, and the  projection onto the kernel of  $\what{D}^2$ by $\what{P}_0$.  Given this situation, Alain Connes defined the measured index, 
$
\dd \Ind_{\what{\Lam}}(\what{D}) \; = \; \int_{\what{M}} \tr_s(k_{\what{P}_0}(x,x)) \, d\what{\mu},
$
which satisfies his celebrated index theorem, see  \cite{ConnesIntegration},  relating $\Ind_{\what{\Lam}}(\what{D})$ to the pairing of the usual characteristic classes with the Ruelle-Sullivan current.   That is
$$
\Ind_{\what{\Lam}}(\what{D}) \; = \; \int_{\what{M}}  \AS (\what{D}_L) \, d\what{\Lam}.
$$
We have the following immediately. 
\begin{theorem}\label{ExRelInd2} Suppose that $F$ is reflective and $\cR^E_F$  is strictly positive off $\cK$, so also $F'$ is reflective and $\maR_{F'}^{E'}$ is  strictly positive off $\cK'$.  Then  
$$
\Ind_{\Lam, \Lam'}(D_L^E, D_L^{E'})   \,\, = \,\,  \Ind_{\what{\Lam}}(\what{D})   \,\, = \,\,
\int_{\cK} \AS (D_L ^E)_L  \, d\Lam  -   \int_{\cK'} \AS (D_L ^{E'})_L  \, d\Lam'.
$$
\end{theorem}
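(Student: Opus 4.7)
The plan is to combine Theorem \ref{Firstmain} applied to the pair $(M,F)$, $(M',F')$ with Connes' measured index theorem applied to the compact foliated manifold $(\what{M},\what{F},\what{\Lam})$ produced by the cut-and-paste construction.

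First, I would observe that the strict positivity of $\cR^E_F$ off $\cK$ and of $\cR^{E'}_{F'}$ off $\cK'$ ensures via Theorem \ref{measPR} that the spectral hypotheses of the foliation relative measured index theorem are satisfied: both $\Dim_{\Lam}(\Im(P_{[0,\ep_0]}))$ and $\Dim_{\Lam'}(\Im(P'_{[0,\ep_0]}))$ are finite for some $\ep_0>0$, and Assumption \ref{Assume} holds. Then Theorem \ref{Firstmain} immediately yields the second equality
$$
\Ind_{\Lam, \Lam'}(D_L^E, D_L^{E'}) \;=\; \int_{\cK} \AS(D_L^E)_L \, d\Lam \;-\; \int_{\cK'} \AS(D_L^{E'})_L \, d\Lam'.
$$

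For the first equality, I would use that $\what{M}$ is compact. The foliated manifold $(\what{M},\what{F})$ carries the holonomy-invariant transverse measure $\what{\Lam}$, and Connes' measured index theorem \cite{ConnesIntegration} gives
$$
\Ind_{\what{\Lam}}(\what{D}) \;=\; \int_{\what{M}} \AS(\what{D}_L) \, d\what{\Lam}.
$$
Since the Atiyah-Singer leafwise form is locally determined by the symbol of the Dirac operator and its connection, and since $\what{D}$ was constructed to restrict to $D_L^E$ on $\cK$ and to $D_L^{E'}$ on $\cK'$ (after reversing the orientation of $F'$), the decomposition $\what{M}=\cK \cup_{\what{\varphi}} \cK'$ splits the right-hand integral as
$$
\int_{\what{M}} \AS(\what{D}_L) \, d\what{\Lam} \;=\; \int_{\cK} \AS(D_L^E)_L \, d\Lam \;-\; \int_{\cK'} \AS(D_L^{E'})_L \, d\Lam',
$$
the minus sign arising precisely from the orientation reversal of $F'$ used in the glueing. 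Chaining these two equalities produces the stated theorem.

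The main subtlety I expect concerns checking that the local contributions assemble correctly along $\pa\cK$: one needs the reflective hypothesis (so $\pa\cK$ is transverse to $F$ and $\what F$ is a smooth foliation of $\what M$), compatibility of the Clifford data under the identification $\what{\varphi}$ so that $\AS(\what D_L)$ restricts as claimed on each piece, and compatibility of $\Lam$ and $\Lam'$ gluing to a holonomy-invariant $\what{\Lam}$ so that the product measure $d\what{\mu}$ splits as $d\mu$ on $\cK$ and $d\mu'$ on $\cK'$. All of these are built into the construction preceding the theorem, so once they are invoked the identification of integrals is routine and the result follows.
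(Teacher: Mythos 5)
Your proof matches the paper's intended (implicit) argument: the paper states this result ``immediately'' after quoting Connes' measured index theorem $\Ind_{\what{\Lam}}(\what{D}) = \int_{\what{M}} \AS(\what{D}_L)\,d\what{\Lam}$, splitting that integral over the cut-and-paste decomposition with a sign from the orientation reversal on $F'$, and combining with Theorem \ref{Firstmain}. One minor inaccuracy worth noting: you invoke Theorem \ref{measPR} to secure ``the spectral hypotheses of the foliation relative measured index theorem'' before applying Theorem \ref{Firstmain}, but Theorem \ref{Firstmain} holds unconditionally in this setting---the finite $\Lam$-dimension hypotheses on $P_{[0,\ep_0]}$ and Assumption \ref{Assume} are only required for Theorem \ref{mainSection4}, not for \ref{Firstmain}. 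This extra step is harmless but unnecessary; the rest of your reasoning (Connes' theorem on the compact $\what{M}$, the local nature of the $\AS$ form, and the sign flip from reversing the orientation of $F'$ in the glueing) is exactly what the paper relies on.
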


Note that, since $(\AS (D_L^E)_L)  |_V  =  \varphi^*((\AS (D_L^{E'})_L)  |_{V'})$, this result is independent of the choice of the transverse compact hypersurface. 

\medskip 

The previous construction  extends to the following more general situation to yield the so called measured $\Phi$ relative index theorem, see again \cite{GL3}.  In particular, we assume that $(M,F)$ and $(M',F')$ satisfy the hypotheses of Theorem \ref{Firstmain}, with the following changes. In particular,  $M \ssm \cK = V_+ \cup V_{\Phi}$  and  $M' \ssm \cK'  = V_+' \cup V_{\Phi}'$, where the unions are disjoint.  For this case,  $\Phi = (\phi, \sigma)$ is a  bundle morphism from $E\to V_{\Phi}$ to $E'\to V_{\Phi}'$ as in Section \ref{RelInd}, our good covers $\cU$ and $\cU'$ are compatible on $V_{\Phi}$ and $V_{\Phi}'$, and $\Lam$ and $\Lam'$ are $\varphi$ compatible on $\cU_{V_{\Phi}}$ and $\cU'_{V'_{\Phi}}$.  Finally, we  assume that $F$ is reflective on $V_{\Phi}$, so $F'$ is reflective on $V_{\Phi}'$, and that  $\cR_F^E$ and $ \cR_{F'}^{E'}$ are strictly positive off $\cK$ and $\cK'$.  

Next, consider the manifold 
$\what{M} = (M \ssm V_{\Phi}) \cup_{\varphi} (M' \ssm V'_{\Phi})$, with the foliation
$$
\what{F} \; = \;  (F |_{M \ssm V_{\Phi}}) \cup_{\what{\varphi}} (F' |_{M' \ssm V'_{\Phi}}), 
$$ 
where the orientation on $\what{F}  |_{M \ssm V_{\Phi}} $ is the one on $F$, and that on $\what{F} |_{M' \ssm V'_{\Phi}} $ is the opposite of the  one on $F'$.  We also have the bundle $\what{E} \to \what{M}$ induced by $E$ and $E'$, the leafwise operator  $\what{D}_L^{\what{E}}$ induced by $D_L^E$  and $D_L^{E'}$,  and the invariant  transverse measure $\what{\Lam}$ induced by $\Lam$ and $\Lam'$.

Because of the positivity off compact subsets, all three operators have finite  invariant  transverse measure indices thanks to Theorem \ref{measPR}.  We then have the $\Phi$ relative index theorem.
\begin{theorem}\label{RelIndThm2}  Under the conditions above,
$$
\Ind_{\what{\Lam}}(\what{D}_L^{\what{E}}) \, = \,  
\Ind_{\Lam}(D_L^E) \,\, - \,\, \Ind_{\Lam'}(D_L^{E'}).
$$
\end{theorem}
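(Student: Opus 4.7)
The plan is to establish finiteness of the three measured indices and then to deduce the additivity by comparing $\what D_L^{\what E}$ to a direct sum operator on the disjoint union $M \sqcup \ol{M'}$, where $\ol{M'}$ denotes $M'$ with the leaf orientation of $F'$ reversed, and $\ol{D}$ denotes the Dirac operator on $\ol{M'}$ induced from $D_L^{E'}$ by this reversal.

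I first apply Theorem \ref{measPR} to each of the three operators.  For $D_L^E$ and $D_L^{E'}$ this is immediate from the hypothesis that $\cR_F^E$ and $\cR_{F'}^{E'}$ are uniformly positive off the compact sets $\cK$ and $\cK'$.  For $\what D_L^{\what E}$, the complement $\what M \ssm (\cK \cup \cK')$ decomposes as $V_+ \sqcup V_+'$ together with a relatively compact neighborhood of the reflective gluing hypersurface, on each piece of which $\cR_{\what F}^{\what E}$ inherits uniform positivity from $V_\Phi$ and $V_\Phi'$ (matching smoothly across the glue by the $\Phi$-compatibility).  Theorem \ref{measPR} then gives $\Dim_{\what\Lam}(\Im \what P_0) < \infty$, so $\Ind_{\what\Lam}(\what D_L^{\what E})$ is well-defined.

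Next, orientation reversal on $\ol{M'}$ flips the sign of both the Atiyah--Singer integrand and the super-trace of the kernel projection, so by additivity of the measured index under disjoint union,
\[
\Ind_{\Lam \sqcup \Lam'}\bigl(D_L^E \sqcup \ol{D}\bigr) \;=\; \Ind_\Lam(D_L^E) \;-\; \Ind_{\Lam'}(D_L^{E'}).
\]
By construction, $\what M$ arises from $M \sqcup \ol{M'}$ by removing the non-compact arms $V_\Phi$ (in $M$) and $V_\Phi'$ (in $\ol{M'}$) and gluing along the reflective hypersurface in a $\Phi$-compatible manner, so the pair $(\what D_L^{\what E},\, D_L^E \sqcup \ol{D})$ is identified on the open subset $(M \ssm V_\Phi) \sqcup (\ol{M'} \ssm V_\Phi')$, whose complement in $\what M$ is the relatively compact gluing locus.

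I then apply an extension of the relative index argument of Theorems \ref{Firstmain} and \ref{mainSection4} to this pair.  The complement of the identification on the $M \sqcup \ol{M'}$ side contains the non-compact arms $V_\Phi$ and $V_\Phi'$, and this is precisely where the uniform positivity of $\cR$ off $\cK \cup \cK'$ enters: by Theorem \ref{measPR} the leafwise spectral projections carry only finite $\Lam$-mass on these arms, while the $\Phi$-compatibility identifies the local Atiyah--Singer integrands on $V_\Phi$ and $V_\Phi'$ pairwise, so their contributions cancel in the extended relative index.  This extended relative index therefore vanishes, and combined with the display above this yields $\Ind_{\what\Lam}(\what D_L^{\what E}) = \Ind_\Lam(D_L^E) - \Ind_{\Lam'}(D_L^{E'})$.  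The principal obstacle will be justifying the extension of the parametrix manipulations of Section \ref{RelInd} to this setting of non-compact disagreement regions; the strict positivity of the Bochner endomorphism, combined with the spectral-dimensional control of Theorem \ref{measPR}, replaces the compact-complement hypothesis there and allows the argument to go through.
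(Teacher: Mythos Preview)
Your approach has a genuine gap: the comparison you set up between $\what M$ and $M \sqcup \ol{M'}$ does not fall under the hypotheses of the relative index theorems in Section~\ref{RelInd}. Those theorems require that the two situations be identified off \emph{compact} subsets on \emph{both} sides, and the entire machinery of $\varphi$-compatible Haefliger pairs, the $s\to\infty$ limit in Definition~4.3, and the parametrix construction in the proof of Theorem~\ref{mainSection4} is built on this. In your setup the complement on the $M\sqcup\ol{M'}$ side is $V_\Phi \sqcup \ol{V_\Phi'}$, which is non-compact, so you cannot even formulate the relative index $\Ind_{\Lam,\Lam'}$ as defined. Your final paragraph acknowledges this but asserts that positivity and Theorem~\ref{measPR} ``allow the argument to go through''; this is not justified and would require rebuilding Section~\ref{RelInd} in a substantially more general form.

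The argument the paper has in mind, following Gromov--Lawson's Theorem~4.35, avoids this entirely by introducing an auxiliary manifold. Cut both $M$ and $\ol{M'}$ along the reflective hypersurface and re-glue to obtain not only $\what M = (M\ssm V_\Phi)\cup_{\what\varphi}(\ol{M'}\ssm\ol{V_\Phi'})$ but also the complementary piece $Z = V_\Phi \cup_{\what\varphi} \ol{V_\Phi'}$. Now the pair $\bigl(M\sqcup\ol{M'},\,\what M \sqcup Z\bigr)$ is identified off \emph{compact} collar neighborhoods on both sides, so Theorem~\ref{IMCOR1} applies directly and gives
\[
\Ind_\Lam(D_L^E) - \Ind_{\Lam'}(D_L^{E'}) \;=\; \Ind_{\what\Lam}(\what D_L^{\what E}) \;+\; \Ind_{\Lam_Z}(D_Z),
\]
the right-hand side of the index formula vanishing because the AS integrands agree on the collars. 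Finally, since $V_\Phi\subset M\ssm\cK$ and $V_\Phi'\subset M'\ssm\cK'$, the curvature operator $\cR$ is uniformly positive on all of $Z$, so Proposition~\ref{measPR2} gives $P_0^Z = 0$ and hence $\Ind_{\Lam_Z}(D_Z)=0$. This is where Theorem~\ref{measPR} (via Proposition~\ref{measPR2}) enters, and it closes the argument without any extension of the relative index machinery.
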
 
The proof  follows easily from Theorem \ref{measPR} by adapting the proof of Theorem 4.35 of  \cite{GL3}.

\section{Spin foliations, PSC, and spaces of PSC metrics}\label{modspaces}

We now show how to extend the Gromov-Lawson construction in \cite{GL3}, Section 3, see also \cite{LM}, IV.7,  to get an invariant for the  space of  PSC metrics on a foliation whose tangent bundle $TF$ admits a spin structure.    We calculate this invariant for a large collection of spin foliations, and show that the space of  PSC metrics on each of these foliations has infinitely many path connected components.

We still assume that $(M,F)$ admits an invariant transverse Borel measure $\Lam$, and for simplicity, we assume that $M$ is compact.  Denote by $\cR$ the space of all smooth metrics on  $F$ with the $C^\infty$ topology, and  by $\cR_{sc}^+  \subset \cR$ the subspace of PSC metrics.  In this section, contrary to previous sections, we assume that the dimension of $F$ is odd.  

Scalar curvature and the so called Atiyah-Singer operator are intimately related.  Denote by $\maS$ the canonical spin bundle associated to the spin structure on $TF$, with connection $\nabla$.    The leafwise Atiyah-Singer operator, namely the leafwise spin Dirac operator $D_L^{\cS}$,  
 $$
D_L:L^2(\maS_L )  \to L^2(\maS_L )    \quad \text{is given by} \quad  
D_L (\sigma)  \,\, = \,\,  \sum_{j=1}^p e_j \cdot \nabla_{e_j} \sigma, 
$$
where $e_1,...,e_p$ is an orthonormal local framing of $TF$.   Denote by $\kappa$ the leafwise scalar curvature of $F$, that is 
$$
\kappa = -\sum_{i,j =1}^p \langle R_{e_i,e_j}(e_i),e_j \rangle,
$$
where $R$ is the curvature operator associated to the metric on the leaves of $F$.  
In this case the Bochner Identity is quite simple, see \cite{LM}, namely
\begin{Equation}\label{basicLisch}
$
\hspace{4.9cm} D_L^2 \,\, = \,\, \nabla^* \nabla \,\, + \,\,  \frac{1}{4}\kappa.
$ 
\end{Equation}
Consider the even dimensional foliation $F_{\R}$ on $M \times \R$ with leaves $L_{\R} = L \times \R$.   If $\maU$ is a good cover of $M$,  $\cU_{\R} = \{(U \times (n-1,n+1), T) | \, (U,T) \in \maU, n \in \Z \}$ is a good cover of $M \times \R$.  $\Lam_{\R} = \Lam$ is an   invariant transverse Borel measure for $F_{\R}$, with associated global measure $d\mu_{\R}  = d\mu \times dt$.  Suppose that $g_0, g_1 \in \cR_{sc}^+$,   and $(g_t)_{t \in [0,1]}$ is a smooth family in  $\cR$ from   $g_0$ to $g_1$.  On $F_{\R}$, set  $G = g_0 + dt^2$ for $t \leq 0$, $G = g_1 + dt^2 $ for $t \geq 1$, and $G = g_t + dt^2$ for $0 < t < 1$.  

The leafwise spin Dirac operator $D_L$ extends to the leafwise spin Dirac operator $D_{\R}$ on $F_{\R}$.   Denote projection onto the kernel of  $D_{\R}^2$ by $P_0$. 
We define $i_{\Lam}(g_0,g_1) \in \R$ by 
$$
i_{\Lam}(g_0,g_1) \,\, = \,\, \int_{M \times \R} \tr_s(k_{P_0}(x,x)) \, d\mu_{\R}.
$$
This is well defined thanks to Theorem \ref{measPR}, since the metric on $F_{\R}$ has PSC off the compact subset $M \times [0,1]$.

\begin{theorem} \label{Phithm} $i_{\Lam}(g_0,g_1)$  depends only on $g_0$ and $g_1$.
If $i_{\Lam}(g_0,g_1) \,\, \neq \,\, 0$, then $g_0$ and $g_1$ are not in the same path connected component of $\cR_{sc}^+$.  
\end{theorem}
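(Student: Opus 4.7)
The strategy is to treat path--independence and the statement about path components separately. For path--independence I will use Theorem \ref{IMCOR1} together with a Chern--Simons transgression argument on leafwise forms and foliated Stokes against $\Lam_\R$; for the consequence for $\cR_{sc}^+$ I will argue by contrapositive, using the Lichnerowicz--Bochner identity (\ref{basicLisch}) and Proposition \ref{measPR2}.

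For path--independence, let $(g_t)$ and $(g_t')$ be two smooth paths in $\cR$ from $g_0$ to $g_1$, and let $G, G'$ be the corresponding metrics on $F_\R$. They coincide outside $\cK := M \times [0,1]$, and since $g_0, g_1 \in \cR_{sc}^+$ the Bochner term $\cR^\maS_{F_\R} = \tfrac14 \kappa$ is uniformly positive off $\cK$ for both metrics. Applying Theorem \ref{IMCOR1} to the pair $(M \times \R, F_\R)$ (with itself), with identifying isometry $\varphi = \mathrm{id}$ on $V = V' = M \times (\R \setminus [0,1])$ and spin bundle isomorphism $\phi = \mathrm{id}$, yields, writing $i(G)$ and $i(G')$ for the two candidate values of $i_\Lam(g_0, g_1)$,
\[
i(G) - i(G') \;=\; \int_{\cK} \AS(D_L^G)_L \, d\Lam_\R \;-\; \int_{\cK} \AS(D_L^{G'})_L \, d\Lam_\R.
\]
Setting $G_s = (1-s) G + s G'$ for $s \in [0,1]$, the standard leafwise Chern--Simons transgression produces a leafwise $p$--form $T = \int_0^1 \tau(G_s, \dot G_s) \, ds$ of local Chern--Weil type satisfying $\AS(G)_L - \AS(G')_L = d_L T$. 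Since $\dot G_s = G' - G$ vanishes off $\cK$, the form $T$ is supported in $\cK$ and has compact leafwise support. Holonomy invariance of $\Lam_\R$ then gives the foliated Stokes identity $\int_{M \times \R} d_L T \, d\mu_\R = 0$, in the spirit of Lemma \ref{techlem2}, so $i(G) = i(G')$.

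For the second claim, assume $(g_t)_{t \in [0,1]}$ is a smooth path in $\cR_{sc}^+$ joining $g_0$ and $g_1$. The naive metric $g_t + dt^2$ on $F_\R$ need not be PSC, since $\kappa(g_t + dt^2)$ picks up $\partial_t$--derivatives of $g_t$. Invoke the Gromov--Lawson slow--deformation trick: for $T$ large, choose smooth $\phi \colon \R \to [0,1]$ equal to $0$ for $s \le 0$ and equal to $1$ for $s \ge T$, with $\|\phi'\|_\infty, \|\phi''\|_\infty$ as small as desired. Compactness of $M$ and $[0,1]$, positivity and continuity of $\kappa(g_t)$, together with smallness of $\phi', \phi''$, imply that $\widetilde G := g_{\phi(s)} + ds^2$ has leafwise scalar curvature uniformly bounded below by some $\kappa_0 > 0$. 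Path--independence (whose proof goes through verbatim with $\cK$ replaced by $M \times [0,T]$) allows us to compute $i_\Lam(g_0, g_1)$ from $\widetilde G$. Then (\ref{basicLisch}) yields $(D_L^\maS)^2 \ge \tfrac{\kappa_0}{4} \Id$ uniformly on $F_\R$; in the notation of Theorem \ref{measPR}, $\kappa_1 \ge \kappa_0/4 > 0$, so Proposition \ref{measPR2} forces $P_0 = 0$, whence $i_\Lam(g_0, g_1) = 0$, contradicting the hypothesis.

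The main technical obstacle is realizing the transgression $T$ as an honestly compactly supported bounded--geometry leafwise form in the function space where foliated Stokes applies: one must work with the leafwise Levi--Civita connections of the metrics $G_s$ (not ambient connections on $M \times \R$), verify uniform bounded geometry of the interpolation in $s$, and check that $\tau(G_s, \dot G_s)$ inherits compact support from $\dot G_s$. Once this is in place, invariance of $\mu_\R$ under leafwise divergences delivers the vanishing, as in the Appendix. The Gromov--Lawson reparametrization in Part 2 is then routine leaf by leaf, made uniform by compactness of $M$ and the bounded geometry of each $g_t$.
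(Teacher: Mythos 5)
Your Part 1 (path-independence) follows essentially the same line as the paper: apply Theorem \ref{IMCOR1} to the two metrics $G, G'$ on $F_{\R}$, write the difference of the $\widehat{A}$-forms as a leafwise exact form supported in $\cK$ (the paper calls it $d\Psi$, you produce it as a Chern--Simons transgression along $G_s = (1-s)G + sG'$), and kill the integral by foliated Stokes against the invariant transverse measure. The identification of the two formulations is routine, so I won't belabor it.

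Your Part 2, by contrast, is genuinely more careful than the proof in the paper, and you have correctly put your finger on a nontrivial point the paper slides past. The paper asserts flatly that if $(g_t)$ is a smooth family in $\cR_{sc}^+$, then $G = g_t + dt^2$ has PSC on the leaves of $F_\R$, and then invokes Proposition \ref{measPR2}. That assertion is not automatic: by the Gauss--Codazzi / Riccati computation for a cylinder metric on $L \times \R$, the scalar curvature of $g_t + dt^2$ is $\kappa(g_t)$ \emph{plus} terms quadratic in the second fundamental form $\tfrac12 \dot g_t$ and involving its $t$-derivative, and a fast deformation can easily make the total negative somewhere even when each slice metric is PSC. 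The Gromov--Lawson slow-reparametrization you invoke (stretch the interval, compose with a ramp $\phi$ whose first and second derivatives are small, and let compactness of $M$ give a uniform lower bound $\kappa_0$) is exactly the standard device, and combined with path-independence for a longer $\cK = M \times [0,T]$ it legitimately reduces the computation of $i_\Lam(g_0, g_1)$ to the uniformly PSC metric $\widetilde{G}$, where Proposition \ref{measPR2} applies. This is a real tightening of the argument as printed; the paper takes the same route but omits the reparametrization step that makes the PSC hypothesis of Proposition \ref{measPR2} actually hold.

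One small point of bookkeeping you should make explicit, since you flag it yourself but leave it implicit: the definition of $i_\Lam$ fixes the transition region to $M \times [0,1]$, while your $\widetilde{G}$ transitions over $M \times [0,T]$. The two are reconciled by a mild extension of Part 1: any two leafwise metrics on $F_\R$ that equal $g_0 + dt^2$ near $-\infty$ and $g_1 + dt^2$ near $+\infty$, and are PSC off a compact set, have equal $\Lam$-indices, by Theorem \ref{IMCOR1} with $\cK$ taken large enough to contain both transition zones; no new ideas are needed, but the reader should see that this is what licenses computing $i_\Lam(g_0, g_1)$ from $\widetilde{G}$.
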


\begin{proof}
 Suppose that $g_t$ and $\what{g}_t$ are two smooth families of metrics in $ \cR$ from $g_0$ to $g_1$, with associated metrics $G$ and $\what{G}$  and associated operators $D_{\R}$ and $\what{D}_{\R}$.  Since, $G$ and $\what{G}$ have uniformly PSC off $M \times [0,1]$,  $F_{\R}$ is $\Phi$ related to itself there. Theorem \ref{IMCOR1} gives
$$
i_{\Lam}(g_0,g_1)(G)  \,\, - \,\, i_{\Lam}(g_0,g_1)(\what{G}) \,\, = \,\, 
\int_{M \times [0,1]}   (\what{A}(TF_{\R})_{G}  -  \what{A}(TF_{\R})_{\what{G}})_{L_{\R}} \,  d \Lam_{\R},
$$
where $\what{A}(TF_{\R})_{G} = AS(D_{\R})$ is the Atiyah-Singer characteristic differential form, the so-called $A$-hat form, on $M \times \R$ associated to the metric $G$, and similarly  for  $\what{G}$.  The forms $\what{A}(TF_{\R})_{G}$ and  $\what{A}(TF_{\R})_{\what{G}}$ are locally computable in terms of their associated curvatures.  Thus, off $M \times [0,1]$, they agree, which justifies the last equality.  In addition, their difference is an exact form $d\Psi$ which is locally computable in terms of their curvatures and connections.   In particular, $\Psi = 0$ on the closure of open sets where their connections agree.   So off $M \times (0,1)$, $\Psi$  is zero, since the connections agree off $M \times [0,1]$.    Applying Stokes' Theorem, we get $i_{\Lam}(g_0,g_1)(G)  \,\, - \,\, i_{\Lam}(g_0,g_1)(\what{G}) = 0$.

For the second part, assume that $g_0$ and $g_1$ are in the same path connected component of $\cR_{sc}^+$, and that $g_t$, is a  smooth family of metrics in $\cR_{sc}^+$ from $g_0$ to $g_1$.   Then $G$ restricted to each leaf of $F_{\R}$ has PSC, so  Proposition \ref{measPR2} gives that  $P_0 = 0$, and $i_{\Lam}(g_0,g_1) = 0$ also.   
\end{proof}

\begin{remark}
Theorem \ref{Phithm}  remains true if we consider concordance classes of metrics, which a priori is stronger.  Recall that leafwise metrics are concordant if there is a metric $G$ on $TF_{\R}$ so that it agrees with $g_0$ near $-\infty$ and with $g_1$ near $+\infty$.  The conclusion is that if  $i_{\Lam}(g_0,g_1) \,\, \neq \,\, 0$, then $g_0$ and $g_1$ are not in the same concordance class of metrics in $\cR^+_{sc}$.  The proof being essentially the same. 
\end{remark}

\begin{remark}
We could also extend this theory to concordance classes of leafwise flat connections $\nabla$ on an auxiliary bundle $E$.  The invariant would become $i_{\Lam}((g_0, \nabla_0),(g_1, \nabla_1))$. See \cite{Be20}.  The theorem would then be that if $g_0$ and $g_1$ are concordant, and $\nabla_0$ and $\nabla_1$ can be joined by leafwise flat connections, then 
$i_{\Lam}((g_0, \nabla_0),(g_1, \nabla_1)) = 0$.
\end{remark}

Next, we have a corollary of Theorem \ref{RelIndThm2}.
\begin{coro}\label{cor6.4} Suppose $g_0, g_1, g_2 \in  \cR_{sc}^+$.
Then
$$
i_{\Lam}(g_0,g_1) + i_{\Lam}(g_1,g_0) \,\, =  \,\, 0,
\;\, \text{and} \;\; 
i_{\Lam}(g_0,g_1) + i_{\Lam}(g_1,g_2) \,\, =  \,\,  i_{\Lam}(g_0,g_2),  \; \text{so,}
$$
$$
i_{\Lam}(g_0,g_1) + i_{\Lam}(g_1,g_2) +i_{\Lam}(g_2,g_0) \,\, =  \,\, 0.
$$
\end{coro}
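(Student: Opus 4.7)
The plan is to prove the three identities in the order: (i) antisymmetry $i_\Lam(g_0,g_1) + i_\Lam(g_1,g_0) = 0$; (ii) a local formula $i_\Lam(g_0,g_1) = \int_{\cK_{01}} \what A(TF_{\R})_{G_{01},L} \, d\Lam_{\R}$ for the transition region $\cK_{01} = M \times [0,1]$; and (iii) additivity $i_\Lam(g_0,g_1) + i_\Lam(g_1,g_2) = i_\Lam(g_0,g_2)$. The third claimed identity follows by combining (iii) with (i).

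For (i), choose the reversed path $g_t^{10} = g_{1-t}^{01}$, so that $\psi(x,t) = (x, 1 - t) \colon M \times \R \to M \times \R$ is a foliated isometry from $(M \times \R, F_{\R}, G_{01})$ to $(M \times \R, F_{\R}, G_{10})$ reversing the leaf orientation. Since $\dim F_{\R} = p+1$ is even, the spin lift of $\psi$ interchanges the two halves of the $\Z_2$-grading on the spinor bundle, so $\tr_s k_{P_0^{G_{10}}}(\psi(y),\psi(y)) = -\tr_s k_{P_0^{G_{01}}}(y,y)$. Since $\psi$ is an isometry preserving $\Lam_{\R}$, integration yields $i_\Lam(g_1,g_0) = -i_\Lam(g_0,g_1)$.

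For (ii), observe that $(N_{01}, F_{\R}, \cK_{01})$ and $(N_{10}, F_{\R}, \cK_{10})$ are compatible foliations in the sense of Section \ref{RelInd}, identified off their compact transition regions by $\psi$, with $\maR_{F_{\R}}^{\maS} = \kappa/4$ uniformly positive on the complements. Theorem \ref{IMCOR1} yields
$$i_\Lam(g_0,g_1) - i_\Lam(g_1,g_0) = \int_{\cK_{01}} \what A(TF_{\R})_{G_{01},L} \, d\Lam_{\R} - \int_{\cK_{10}} \what A(TF_{\R})_{G_{10},L} \, d\Lam_{\R}.$$
The left side is $2 i_\Lam(g_0,g_1)$ by (i). On the right, $\psi^* \what A(TF_{\R})_{G_{10}} = \what A(TF_{\R})_{G_{01}}$ since $\psi$ is an isometry, and the leaf-orientation reversal by $\psi$ gives $\int_{\cK_{10}} \what A_{G_{10}} \, d\Lam_{\R} = -\int_{\cK_{01}} \what A_{G_{01}} \, d\Lam_{\R}$, so the right side equals $2 \int_{\cK_{01}} \what A_{G_{01}} \, d\Lam_{\R}$, establishing (ii).

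For (iii), use Theorem \ref{Phithm} to take the path from $g_0$ to $g_2$ as the concatenation of $g^{01}$ and $g^{12}$ with a plateau at $g_1$: $G_{02}$ equals $G_{01}$ on $M \times (-\infty,1]$, equals $g_1 + dt^2$ on $M \times [1,2]$, and equals a shift of $G_{12}$ on $M \times [2,\infty)$. Applying (ii) with $\cK_{02} = M \times [0,3]$ and splitting into integrals over $M \times [0,1]$, $M \times [1,2]$, $M \times [2,3]$: the middle integral vanishes because on the product region the curvature of $TF_{\R}$ is a block sum of the curvature of $(L,g_1)$ and that of the flat $\R$-factor, so $\what A(TF_{\R}) = \what A(TF)$ has no component in leafwise degree $p+1$; the remaining two integrals equal $i_\Lam(g_0,g_1)$ and $i_\Lam(g_1,g_2)$ by (ii). The main obstacle is step (ii): checking all the hypotheses of Theorem \ref{IMCOR1} for the $\Phi$-compatibility between $N_{01}$ and $N_{10}$ via the non-orientation-preserving isometry $\psi$, and carefully tracking the orientation sign in the pullback of $\what A$ under $\psi$.
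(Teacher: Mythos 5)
Your plan takes a genuinely different route from the paper, and while the overall strategy (derive a local formula and compute directly) is morally sound, step (ii) contains a sign error that cannot be brushed aside; you actually flag it yourself in your last sentence, but the gap is fatal as written, not merely a technicality to be tracked.

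The issue is that Theorem \ref{IMCOR1} requires a $\Phi = (\phi, \varphi)$ that identifies the $\Z_2$-graded Clifford data compatibly: the parametrices in Section \ref{RelInd} are built from the $+$ parts $(D_L^E)^+$ and $(D_L^{E'})^+$, and the remainders must correspond under $\Phi$. Since $\psi(x,t)=(x,1-t)$ reverses the leaf orientation and $\dim F_\R$ is even, the induced bundle map swaps $\maS^+ \leftrightarrow \maS^-$. To meet the hypotheses you must therefore flip the grading on the $N_{10}$ side. But then the second index in Theorem \ref{IMCOR1} becomes $\Ind_{\Lam'}\!\left(D_\R(G_{10})^{\mathrm{flip}}\right) = -\,i_\Lam(g_1,g_0)$, so the left-hand side is $i_\Lam(g_0,g_1) + i_\Lam(g_1,g_0)$, which is $0$ by part (i) — not $2\,i_\Lam(g_0,g_1)$. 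The same flip inserts a compensating sign into the $\cK'$ integrand, so the right-hand side is also $0$, and the purported equality reduces to $0=0$. Your claimed local formula $i_\Lam(g_0,g_1) = \int_{\cK_{01}} \what{A}(TF_\R)_{G_{01},L}\,d\Lam_\R$ simply does not follow from the argument you give, and since step (iii) is built on it, the additivity statement is not established either.

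The paper sidesteps this entirely with a shorter argument. Since $G_{01}$ and $G_{02}$ agree (metric, connection, spin structure, orientation) on the $g_0$-end $M\times(-\infty,0)$, one can apply the $\Phi$-relative index theorem (Theorem \ref{RelIndThm2}) directly: cut both cylinders along the $g_0$-end and glue, reversing orientation on the $G_{02}$ side. The resulting cylinder carries the metric $G_{21}$, so $i_\Lam(g_0,g_1) - i_\Lam(g_0,g_2) = \Ind_{\what\Lam}(\what D) = i_\Lam(g_2,g_1) = -\,i_\Lam(g_1,g_2)$, which gives additivity, and the antisymmetry $i_\Lam(g_0,g_1)+i_\Lam(g_1,g_0)=0$ follows, as in your (i), from the orientation-reversing isometry of $\R$. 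This needs no local formula, and in particular no claim about the vanishing of $\eta$-like boundary contributions that the cut-and-paste mechanism handles for you. If you want to salvage a version of your approach, you should derive the local formula by cut-and-paste against the constant cylinder, not by applying Theorem \ref{IMCOR1} to $(N_{01},N_{10})$ with an orientation-reversing $\psi$.
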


\begin{proof}  In the notation of Theorem \ref{RelIndThm2}, take $(M,F)$,  $(M',F')$ and $(\what{M}, \what{F})$ to be $(M \times \R, F_{\R})$, $\cK = \cK' = M \times [0,1]$, $V_{\Phi} =  V_{\Phi}' =  M \times (-\infty, 0)$, and $V_+ = V_+' = M \times (1,\infty)$.
To compute $i_{\Lam}(g_i,g_j)$ take 
$$
G_{i,j} = g_i + dt^2 \text{  for $t \in (-\infty,0]$, and  } G_{i,j} = g_j + dt^2  \text{  for  }t \in [1,\infty).
$$ 

The first equation is obvious since $i_{\Lam}(g_0,g_1)  = \Ind_{\Lam_{\R}}(D_{\R}(G_{0,1}))$, while $i_{\Lam}(g_1,g_0)$ is  the same, except with the orientation of $\R$ reversed, which changes the sign of the resulting index.  

For the second, we have
$$
i_{\Lam}(g_0,g_1) - i_{\Lam}(g_0,g_2)  =   \Ind_{\Lam_{\R}}(D_{\R}(G_{0,1})) -   \Ind_{\Lam_{\R}}(D_{\R}(G_{0,2}))  =    \Ind_{\Lam_{\R}}(D_{\R}(G_{2,1}))= i_{\Lam}(g_2,g_1)  =  - i_{\Lam}(g_1,g_2).
$$
The second equality is from Theorem \ref{RelIndThm2}, where $\what{D}_L^{\what{E}} = D_{\R}(G_{2,1})$, $D_L^E = D_{\R}(G_{0,1})$, and $D_L^{E'} = D_{\R}(G_{0,2})$.

The third equation is now also obvious.
\end{proof} 

Now suppose that $M$ is the boundary of a compact manifold $W$ with a spin foliation $\what{F}$  which is transverse to $M$, and  which restricts to $F$ there.   Suppose further that $\Lam$ extends to an invariant  transverse measure $\what{\Lam}$ on $W$.   Extend the foliation $\what{F}$ and the metric $\what{\Lam}$ as above to  $W \cup_{M}  (M \times [0,\infty))$.  Given a metric $g$  of PSC on $F$, extend it to a complete leafwise metric $\what{g}$ on $\what{F}$  by making it $g + dt^2$ on $M \times [0,\infty)$ and extending it arbitrarily over the interior of $W$.   Then the kernel of the leafwise Atiyah-Singer operator $D_L^2$ on $\what{F}$, which is an even operator, has finite $\what{\Lam}$ dimension.  Denote the super projection onto this kernel by $P_0 = P_0^+ \oplus P_0^-$.   Then $D_L$  has a finite $\what{\Lam}$ index, namely 
$$
\Ind_{\what{\Lam}}(D_L) \,\, = \,\, \Dim_{\what{\Lam}}(\Im P_0^+) \,\, - \,\,  \Dim_{\what{\Lam}}(\Im P_0^-).
$$  
\begin{defi}\label{defig}
\hspace{0.5cm}$i_{\Lam}(g,W) \,\, = \,\, \Ind_{\what{\Lam}}(D_L)$.
\end{defi}
Note that Theorem \ref{mainSection4} and the proof of Theorem \ref{Phithm}, adapted,  show that  $i_{\Lam}(g,W)$ does not depend on the extension of $g$ over $W$, and that if $g$ extends with PSC, then $i_{\Lam}(g,W) = 0$.  
We have the following corollary of  Theorem \ref{mainSection4}.  
\begin{coro}\label{cor6.6}  Suppose that $g_0, g_1 \in \cR_{sc}^+$.  Then
$$
i_{\Lam}(g_0,W)  \,\, + \,\, i_{\Lam}(g_0,g_1) \,\, =  \,\, i_{\Lam}(g_1,W).
$$
\end{coro}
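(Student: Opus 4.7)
The plan is to apply the $\Phi$-relative index theorem (Theorem \ref{RelIndThm2}) to a cut-and-paste construction that converts $X_1$ into $X_0$ via $Y$. Write $X_j = W \cup_M (M \times [0, \infty))$, equipped with a metric that restricts to $g_j + dt^2$ on the cylinder, for the manifold used to compute $i_\Lam(g_j, W)$, $j = 0, 1$; and $Y = M \times \R$ with the metric $G$ from Section \ref{modspaces}, for the manifold used to compute $i_\Lam(g_0, g_1)$. Fixing $T > 1$, I would take $\cK_1 = W \cup (M \times [0, T]) \subset X_1$ with $V_\Phi = M \times (T, \infty)$ and $V_+ = \emptyset$; and $\cK_2 = M \times [-T, T] \subset Y$ with $V'_\Phi = M \times (T, \infty)$ and $V'_+ = M \times (-\infty, -T)$. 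Both $V_\Phi$ and $V'_\Phi$ carry the metric $g_1 + dt^2$, so the identity map furnishes the $\Phi$-compatibility, and both manifolds have PSC off the respective $\cK$'s, so Theorem \ref{measPR} yields finite $\Lam$-indices for all three operators.

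The cut-and-paste manifold $\what M = (X_1 \ssm V_\Phi) \cup_{\what\varphi} (Y \ssm V'_\Phi)$ glues $W \cup (M \times [0, T])$ to $M \times (-\infty, T]$ along $M \times \{T\}$ via the reflection of Section \ref{RelInd}. Reparametrizing the second piece by $s = 2T - t$ identifies $\what M$ with $W \cup_M (M \times [0, \infty))$, carrying a complete metric that equals $g_1 + ds^2$ on $M \times [0, 2T - 1]$, interpolates from $g_1$ to $g_0$ on $M \times [2T - 1, 2T]$, and equals $g_0 + ds^2$ on $M \times [2T, \infty)$. Theorem \ref{RelIndThm2} then yields
$$
\Ind_{\what\Lam}(\what D) \; = \; i_\Lam(g_1, W) - i_\Lam(g_0, g_1).
$$

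The remaining step will be to show $\Ind_{\what\Lam}(\what D) = i_\Lam(g_0, W)$. Both $\what M$ and $X_0$ are $W \cup_M (M \times [0, \infty))$ as foliated spin manifolds, with metrics that agree (at $g_0 + ds^2$) on $M \times [2T, \infty)$, and both have PSC off the common compact subset $\cK_0 = W \cup (M \times [0, 2T])$. Theorem \ref{IMCOR1} applied to this pair gives
$$
\Ind_{\what\Lam}(\what D) - i_\Lam(g_0, W) \; = \; \int_{\cK_0} \big(\AS(\what D)_L - \AS(D_{X_0})_L\big)\, d\what\Lam.
$$
The integrand is locally computable from the two leafwise metrics and connections, and it is exact, equal to $d_L \Psi$ for a Chern--Simons-type leafwise transgression form $\Psi$ that vanishes wherever the data coincide --- in particular on a neighborhood of $\partial \cK_0$. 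A leafwise Stokes argument in Haefliger cohomology, using the holonomy invariance of $\what\Lam$ exactly as in the proof of Theorem \ref{Phithm}, will make the right-hand side vanish. Substituting back gives $i_\Lam(g_0, W) + i_\Lam(g_0, g_1) = i_\Lam(g_1, W)$.

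The hard part will be precisely this invariance step: the definition of $i_\Lam(g, W)$ fixes the cylindrical metric to be $g + dt^2$ everywhere, so showing that altering it on a compact piece while retaining $g + dt^2$ near infinity preserves the index amounts to extending the extension-independence of $i_\Lam(g, W)$ beyond modifications inside $W$. The analytic core of the argument is modeled on the proof of Theorem \ref{Phithm}, but the bookkeeping with leafwise Stokes, with the transgression of the $\what A$-form across the transition region, and with the Haefliger-integrated character of the identity, will require care.
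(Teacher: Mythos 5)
Your argument is sound but takes a genuinely different path from the paper's. You glue $X_1$ (carrying $g_1 + dt^2$ on its cylinder) to $Y = M \times \R$ (carrying the transition metric $G$) along $M\times(T,\infty)$ via Theorem~\ref{RelIndThm2}, obtaining $\what M \simeq W\cup_M (M\times[0,\infty))$ whose cylinder metric runs from $g_1$ near $W$ to $g_0$ near infinity, with $\Ind_{\what\Lam}(\what D) = i_\Lam(g_1,W) - i_\Lam(g_0,g_1)$. The remaining identification $\Ind_{\what\Lam}(\what D)=i_\Lam(g_0,W)$ is precisely the extension-independence of $i_\Lam(g,W)$ applied to the enlarged collar $W'=W\cup_M(M\times[0,2T])$; your Theorem~\ref{IMCOR1}-plus-transgression argument is essentially a proof of that independence, and you could also simply invoke it, since it is asserted in the text immediately before the corollary. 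The paper instead applies Theorem~\ref{mainSection4} once, with $\what M = M\times\R$ and $\what M' = M_0\,\dot\cup\,M_1$ a \emph{disjoint union}: $M_0$ carries $g_0+dt^2$ with reversed orientation and $M_1$ carries the interpolating metric. The $\AS$-form contributions over the two copies of $W$ then cancel under the orientation reversal, and the remaining contribution over $M\times[0,1]\subset M_1$ cancels against that of $\what M$, giving $\Ind_{\what\Lam',\what\Lam}(\what D',\what D)=0$ directly. The paper's version packages all the cancellation into a single application of the relative-index formula and avoids the explicit Chern--Simons/Stokes bookkeeping; yours costs one more invocation of a relative index theorem and an explicit transgression argument, but arrives at the same place and is no less valid.
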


\begin{proof}
Consider the foliated manifolds  
$$
\what{M}  = M \times \R  \quad \text{ and } \quad \what{M}' = M_0 \, \dot\cup \, M_1,
$$
 which satisfy the following.
\begin{itemize}
\item $\what{M}$ has the metric $g_t$ above, giving $i_{\Lam}(g_0,g_1)$.
\item $M_0 = W_0 \cup_M  (M \times [0,\infty))$ with the metric $g_0 + dt^2$ on $M \times [0,\infty)$, and the metric $\what{g}$ on $W_0 = W$.  Take the opposite orientation on $M_0$ by reversing the orientations on $[0,\infty)$ and $W_0$, so this gives $-i_{\Lam}(g_0,W)$.
\item $M_1 = W_1 \cup_M  (M \times [0,\infty))$ with the metric $g_t$ above on $M \times [0,\infty)$, and the metric $\what{g}$ on $W_1 = W$.  Note that  $W_1\cup_M  (M \times [0,1]) = W \cup_M  (M \times [0,1]) \simeq W$, and  the metric on $M \times [1,\infty)$ is $g_1 \times dt^2$, giving $i_{\Lam}(g_1,W)$. 
\item Note that $\what{M}$ has the compact subset $K = M \times [0,1]$, that  $\what{M}'$ has the compact subset $K' = K_0  \cup K_1$, where $K_0 = W_0$ and $K_1 =  W_1 \cup_M  (M \times [0,1])$, and that there is PSC off these compact subsets.  
\end{itemize}
On $\what{M}$ and $\what{M}'$ respectively, we have the operators denoted $\what{D}$ and $\what{D}'$, and the invariant transverse measures $\what{\Lam}$ and $\what{\Lam}'$.    Note that $\what{M} \ssm K$ in $\Phi$ equivalent to $\what{M}' \ssm K' $,  $W_0$ is $\Phi$ equivalent to   $W_1$, except that the orientations are opposites, and $K_1 \ssm W_1$ is $\Phi$ equivalent to $K$.  Then, Theorem \ref{mainSection4} gives
$$
\Ind_{\what{\Lam}',\what{\Lam}}(\what{D}',\what{D})  \; = \;  \int_{K'}  AS(\what{D}')_L  \,d \what{\Lam}'\; - \; \int_{K} AS(\what{D})_L \, d \what{\Lam} \; = \; 
$$
$$
 \int_{K_1} AS(\what{D}')_L  \, d \what{\Lam}' \; - \;   \int_{K_0} AS(\what{D}')_L  \, d \what{\Lam}' \; - \; \int_{K} AS(\what{D})_L \, d \what{\Lam} \; = \; 
$$
$$
\int_{K_1\ssm W_1}  \hspace{-0.6cm}    AS(\what{D}')_L  \, d \what{\Lam}' \; - \; \int_{K} AS(\what{D})_L \, d \what{\Lam} \; = \;  0.
$$
But,  $\Ind_{\what{\Lam}', \what{\Lam}} (\what{D}',\what{D}) = \Ind_{\what{\Lam}'} (\what{D}') -\Ind_{\what{\Lam}} (\what{D})$, and 
$\Ind_{\what{\Lam}'} (\what{D}') =  i_{\Lam}(g_1,W) -  i_{\Lam}(g_0,W)$, while $\Ind_{\what{\Lam}} (\what{D}))= i_{\Lam}(g_0,g_1)$. 
\end{proof}

\medskip

\noi
{\bf Examples.}
Suppose that  $M$ is a compact $4\ell$ dimensional spin manifold with $\what{A}(M) \neq 0$, and that its fundamental group $\Gam$ acts smoothly on a compact oriented Riemannian manifold, preserving its volume form.  Such manifolds abound.  (The following  is thanks to Stephan Stolz.)  In particular, take any finitely presented group $\Gam$ which acts smoothly preserving the volume form on a compact oriented Riemannian manifold $N$.  Any finitely presented subgroup of $SO_n$ will do. Use a presentation of $\Gam$ to produce a finite 2-dimensional CW complex with fundamental group $\Gam$.  Embed the complex  in $\R^{4\ell+1}$ and thicken it into a compact manifold of dimension $4\ell+1$ with boundary $M_1$. Then $\pi_1(M_1) =  \Gam$. It is a framed manifold, so its $\what{A}$-genus is zero. Let $M_2$ be any simply connected spin manifold of dimension $4\ell$ with non-zero $\what{A}$-genus, e.g., take  $M_2$ to be the product of $\ell$ copies of the Kummer surface,  which has $\what{A}$-genus $2$, so $M_2$ has $\what{A}$-genus $2^{\ell}$.  More generally,  Lemma 5.1 of  \cite{GHS18} and its proof show that for any positive integer $\ell$ and any integer $k$, $k$ even if $\ell$  is odd, there is a closed simply connected spin manifold $M_2$ of dimension $4\ell$ with $\what{A}$-genus $k$.  Then the connected sum  $M = M_1 \, \#  \, M_2$ has the required properties.  

\medskip

Denote the universal cover of $M$ by $\wtit{M}$, and consider the flat fiber bundle 
$$
 Y =\wtit{M} \times_{\Gam} N,
$$
with its natural flat foliation $F$.   Since $\Gam$ preserves the volume form on $N$,  it descends to an invariant  transverse measure $\Lam$ for $F$.  It follows immediately that 
$$
 \int_{Y}  \what{A}(TF)_L \, d \Lam \,\, = \,\,  \what{A}(M) \vol(N)   \,\, \neq \,\, 0.
 $$
 
Recall the sequence of $SO_4$ vector bundles $\pi:E_k \to \S^4$  from \cite{GL3}, given after their Corollary 4.45.  
Using the standard metric on the base $\S^4$, an orthogonal connection on $E_k$, which gives a splitting $TE_k = T\R^4 \oplus \pi^*(T\S^4)$, and an $SO_4$ invariant metric on the fibers $\R^4$, they construct a ``torpedo" metric on the total space of $E_k$ as follows.   The metric on  $\pi^*(T\S^4)$ is the pull-back from the base.  The fibers $\R^4$ are totally geodesic and the metric on them is a smoothing near the equator, which is the same on all radial lines, of the $\S^4$ hemispherical metric on $\D^4$, attached along the equator $\pa \D^4 = \S^3$, to the cylindrical metric on $\S^3 \times (0,\infty)$.   
 
Denote by $X_k$ and $\Sigma_k = \pa X_k$, the unit disk and unit sphere bundles of $E_k$.   Note  that on $\Sigma_k \times (1 - \ep,\infty)$  the metric is  $g_k \times dt^2$, where $g_k$ on $\Sigma_k$ has PSC, which we can make as large as we please by multiplying $g_k$ by a small enough constant.   Note also that $g_k$ extends over $X_k$ with PSC.

The bundle $E_k$ is chosen so that it  has Euler number $1$, which implies that $\Sigma_k$ is homotopy sphere, and that the Pontrjagin number $p_1(E_k)$ satisfies $p_1(E_k)^2 =   4 + 896k$.  There are an infinite number of such integers $k$ so that $4 + 896k$ is a perfect square.  In particular, for $m \in \Z$, set $k=m+56 m^2$.

Denote by   
$\what{X}_k = X_k \cup_{\Sigma_k} \D^8$ the compact manifold obtained by attaching an $8$ disk along the boundary $\Sigma_k$.  Classical results of Milnor, \cite{M56, M65}, imply  that  the signature of $\what{X}_k = 1$, that $p_1(E_k)^2 = p_1( \what{X}_k)^2$,  that $\Sigma_k$ is diffeomorphic to the standard $\S^7$, (the Milnor invariant $\mu(\Sigma_k,s) = 0$),  and that  $\what{A}(\what{X}_k) = k$.  

Now consider $Y \times \S^7$ with the foliation $\what{F}  = F \oplus T\S^7$.     
Multiply the metric  $g_k$ on $\S^7$ by a constant so small  that the metric $G_k$ on $\what{F}$ has PSC everywhere.  Thus we have a countable family of PSC metrics on $\what{F}$.  Now, $Y \times \S^7 = Y \times \pa X_k$, and we have the foliation $\what{F}_k  = F \oplus TX_k$ of $Y \times X_k$, along with the transverse measure induced by $\Lam$.  
Since $G_k$ extends over $\what{F}_k$ with PSC,  the invariant $ i_{\Lam}(G_k,Y \times X_k)$ is zero.    

 \begin{theo} For $k_1 \neq k_2$, the metrics $G_{k_1}$ and $G_{k_2}$ are not in the same path connected component of the metrics of PSC on the foliation $\what{F}$ of  $Y \times \S^7$.  Thus the space $\cR_{sc}^+$ of  PSC metrics for $\what{F}$ has infinitely many path connected components.
\end{theo}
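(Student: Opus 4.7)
By Theorem~\ref{Phithm} it suffices to show that $i_\Lam(G_{k_1},G_{k_2})\neq 0$ whenever $k_1\neq k_2$, because the set of admissible $k$ is infinite (e.g.\ $k=m+56m^2$, $m\in\Z$), and pairwise distinctness of the classes $[G_k]\in\pi_0(\cR_{sc}^+)$ yields infinitely many components at once.  The whole argument is driven by the bounding manifolds $Y\times X_{k_j}$ of Definition~\ref{defig}: since $G_{k_j}$ extends with PSC over $\what{F}_{k_j}=F\oplus TX_{k_j}$ on $Y\times X_{k_j}$, we have $i_\Lam(G_{k_j},Y\times X_{k_j})=0$.  Applying Corollary~\ref{cor6.4}/\ref{cor6.6} with $W=Y\times X_{k_1}$ gives
\[
i_\Lam(G_{k_1},G_{k_2})\;=\;i_\Lam(G_{k_2},Y\times X_{k_1})-i_\Lam(G_{k_1},Y\times X_{k_1})\;=\;i_\Lam(G_{k_2},Y\times X_{k_1}).
\]

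Next, I would compute $i_\Lam(G_{k_2},Y\times X_{k_1})$ by comparing it to the vanishing invariant $i_\Lam(G_{k_2},Y\times X_{k_2})$ via the foliation relative measured index theorem.  Concretely, attach a cylindrical end $Y\times\S^7\times[0,\infty)$ carrying $G_{k_2}+dt^2$ both to $Y\times X_{k_1}$ (with an arbitrary extension of the metric across $X_{k_1}$) and to $Y\times X_{k_2}$ (with $G_{k_2}$ still PSC across $X_{k_2}$).  The two resulting open foliated manifolds are identified through $\varphi$ off the compact subsets $\cK_j=Y\times X_{k_j}$, and the leafwise scalar curvature is uniformly positive on the common end; hence Theorem~\ref{measPR} applies and both operators have finite measured indices.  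Theorems~\ref{Firstmain} and \ref{mainSection4} then yield
\[
i_\Lam(G_{k_2},Y\times X_{k_1})\;=\;\int_{Y\times X_{k_1}}\widehat{A}(T\what{F}_1)_L\,d\Lam\;-\;\int_{Y\times X_{k_2}}\widehat{A}(T\what{F}_2)_L\,d\Lam,
\]
since $AS(D_L^{\cS\,\prime})_L$ for the leafwise spin Dirac operator is the leafwise $\widehat{A}$-form.

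Because $\what{F}_j=F\oplus TX_{k_j}$ is a product foliation with the invariant transverse measure $\Lam$ coming entirely from $Y$, multiplicativity of $\widehat{A}$ together with a Fubini-type argument on the leafwise top-degree piece gives
\[
\int_{Y\times X_{k_j}}\widehat{A}(T\what{F}_j)_L\,d\Lam\;=\;\Bigl(\int_{Y}\widehat{A}(TF)_L\,d\Lam\Bigr)\cdot\int_{X_{k_j}}\widehat{A}(TX_{k_j}),
\]
and the first factor equals $\widehat{A}(M)\,\vol(N)\neq 0$ by the construction of $Y$.  For the second factor, I close $X_{k_j}$ up to $\widehat{X}_{k_j}=X_{k_j}\cup_{\S^7}D^8$, choosing a common boundary-compatible metric on $D^8$ in both cases; by Milnor $\widehat{A}(\widehat{X}_{k_j})=k_j$, and the common $D^8$ contribution cancels in the difference, so $\int_{X_{k_1}}\widehat{A}(TX_{k_1})-\int_{X_{k_2}}\widehat{A}(TX_{k_2})=k_1-k_2$.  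Combining everything,
\[
i_\Lam(G_{k_1},G_{k_2})\;=\;\widehat{A}(M)\,\vol(N)\,(k_1-k_2)\;\neq\;0
\]
whenever $k_1\neq k_2$, which by Theorem~\ref{Phithm} places the $G_k$ in pairwise distinct path components of $\cR_{sc}^+$.

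The main obstacle I anticipate is the rigorous factorization in step three: to apply Theorem~\ref{Firstmain} I must verify that the two open manifolds above really do fit the $\varphi$-compatibility framework of Section~\ref{RelInd} (including $\varphi$-compatible good covers, the Clifford isomorphism of spinor bundles, and compatibility of the transverse measures), and that the leafwise top-degree piece of $\widehat{A}(TF)\wedge\widehat{A}(TX_{k_j})$ integrates against $\Lam$ as a genuine product.  Once this Fubini identity is in place, the remainder is Milnor's arithmetic for $\widehat{A}(\widehat{X}_k)$ and routine manipulation of Corollary~\ref{cor6.6}.
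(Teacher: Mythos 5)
Your proposal is correct and follows the same strategy as the paper: reduce $i_\Lam(G_{k_1},G_{k_2})$ to differences of the bounding-manifold invariants $i_\Lam(\cdot,Y\times X_{k_j})$ via Corollary~\ref{cor6.6}, evaluate those by Theorems~\ref{Firstmain}/\ref{mainSection4} using the vanishing $i_\Lam(G_{k_j},Y\times X_{k_j})=0$, and finish with $\widehat{A}$-multiplicativity and Milnor's computation $\what{A}(\what{X}_k)=k$. The only cosmetic difference is that you anchor against $G_{k_2}$ and $X_{k_2}$ directly, whereas the paper routes everything through the round metric $G_0$ and the disk bundle $X_0$; the arithmetic and the underlying lemmas are identical, and your observation that only the difference $\int_{X_{k_1}}\what A - \int_{X_{k_2}}\what A = k_1-k_2$ is needed is a fine shortcut around the cited Gromov--Lawson identity $\int_{X_k}\what A(TX_k)=\what A(\what X_k)$.
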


\begin{proof}
Note that $G_0$ is associated to the canonical constant curvature metric $g_0$ on $\S^7$.    Since $ i_{\Lam}(G_k, Y \times X_k) = 0$,  we have by Corollaries  \ref{cor6.4} and \ref{cor6.6},   
$$
i_{\Lam}(G_0, Y \times X_k) \,\, = \,\,  i_{\Lam}(G_k, Y \times X_k)   \,\, +  \,\,  i_{\Lam}(G_k,G_0)   \,\, = \,\,   i_{\Lam}(G_k,G_0).
$$
Theorem \ref{mainSection4} applied to $Y \times (X_k \cup_{\S^7} (\S^7 \times [1,\infty))$ and 
$Y \times (X_0 \cup_{\S^7} (\S^7 \times [1,\infty))$, using the metric $G_0$ on $Y \times \S^7$, shows that 
$$
i_{\Lam}(G_0, Y \times X_k) \,\, = \,\, i_{\Lam}(G_0, Y \times X_k) \,\, - \,\,  i_{\Lam}(G_0, Y \times X_0) \,\, = \,\,
$$
$$
\int_{Y \times X_k} \hspace{-0.5cm} \what{A}(T\what{F}_k)_L  \, d\Lam   \,\, = \,\,
 \what{A}(M) \vol(N)\what{A}(\what{X}_k) \,\, = \,\, \what{A}(M) \vol(N)k.
 $$
 For the proof that $\dd \int_{X_k} \what{A}(TX_k)  = \what{A}(\what{X}_k)$, see \cite{GL3},  the proof of Theorem 4.47.
Thus, for $k_1 \neq k_2$,
$$
i_{\Lam}(G_{k_1},G_{k_2}) \,\, = \,\, i_{\Lam}(G_{k_1},G_0)  \,\, - \,\, i_{\Lam}(G_{k_2}, G_0) \,\, = \,\,
$$
$ 
\hspace{3.0cm}  i_{\Lam}(G_0, Y \times X_{k_1}) \,\, - \,\,  i_{\Lam}(G_0, Y\times X_{k_2}) \,\, = \,\, 
 \what{A}(M) \vol(N)(k_1 - k_2) \,\, \neq \,\, 0.
$
\end{proof}

\section{Appendix}

We first justify the claim in Remark \ref{nonsquare}.

The following is standard, see \cite{T81, Roebook88}.  Note that they work on compact manifolds, but the extensions to the bounded geometry case are straightforward.

\begin{lemma}\label{Mollifier}\
Assume that the  manifold $M$, the foliation $F$, and the bundle $E$ have bounded geometry. Then there exists a family $(J_\ep)_{\ep \in (0,1)}$ of leafwise operators on sections of $E$, which are leafwise smoothing, with kernels in $\Gamma_s (F, E)$ such 
that:
\begin{enumerate}
\item The families $(J_\ep)_{\ep \in (0,1)}$ and $([J_\ep, D_L^E])_{\ep \in (0,1)}$ 
are uniformly bounded operators on any leafwise Sobolev space of $E$ 
with an $\ep$-independent bound;
\item As $\ep\to 0$, $J_\ep \to \Id$ weakly on every $L^2(L, E |_L)$.
\end{enumerate}
\end{lemma}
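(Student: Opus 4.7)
The plan is to define $J_\ep$ via the leafwise functional calculus of the essentially self-adjoint operator $D_L^E$: namely $J_\ep = \phi(\ep D_L^E)$ for a single fixed Schwartz function $\phi$ chosen so that Theorem \ref{SCF&D1} applies to $z\mapsto \phi(\ep z)$ for every $\ep \in (0,1)$. Concretely, I would fix an even $\rho\in C_c^\infty(\R)$ with $\int\rho = \sqrt{2\pi}$ and set $\phi = FT^{-1}(\rho)$, so that $\phi$ is an even real Schwartz function with $\phi(0)=1$ and with $\what{\phi} = \rho$ compactly supported. Then $z\mapsto \phi(\ep z)$ is Schwartz with Fourier transform $y\mapsto \tfrac{1}{\ep}\rho(y/\ep) \in C_c^\infty(\R)$, and Theorem \ref{SCF&D1} will immediately yield that $k_{J_\ep}\in \Gamma_s(F,E)$, so each $J_\ep$ is leafwise smoothing with Schwartz kernel of the required class.

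The decisive feature of this construction is that both $J_\ep$ and $D_L^E$ are Borel functions of the single self-adjoint operator $D_L^E$, so the commutator $[J_\ep, D_L^E]$ will vanish identically on the domain of $D_L^E$. Condition (1) for the commutator family is therefore trivial. For $J_\ep$ itself, since the leafwise Sobolev norm is $\|\sigma\|_k = \|(1+(D_L^E)^2)^{k/2}\sigma\|_0$ from Section \ref{LDOs}, the operator $J_\ep$ will commute with $(1+(D_L^E)^2)^{k/2}$ and hence have the same operator norm on $\maH^k$ as on $L^2$. By the Spectral Mapping Theorem this norm is bounded by $\sup_{z\in\R}|\phi(\ep z)| \leq \|\phi\|_\infty$, which is independent of $\ep$, giving the uniform Sobolev bound in condition (1).

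For condition (2), the scalar functions $\phi(\ep z)$ converge pointwise to $\phi(0)=1$ as $\ep\to 0$ and remain uniformly bounded by $\|\phi\|_\infty$. The bounded-pointwise-convergence version of the spectral theorem (the same principle underlying Proposition \ref{ptwseconv}) then gives strong operator convergence $J_\ep \to \Id$ on every $L^2(L, E|_L)$, and a fortiori weak convergence. The main conceptual point is that by routing $J_\ep$ through the functional calculus of $D_L^E$ itself, the standard technical difficulties of Friedrichs-style mollifier constructions on non-compact manifolds---namely, controlling the commutator with a non-local differential operator and making the bounds uniform across all leaves---disappear entirely; the only genuine analytic input is Theorem \ref{SCF&D1}, whose hypotheses are verified once and for all by the compact support of $\what{\phi}$.
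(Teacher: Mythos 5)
Your construction is correct and does prove the lemma as stated, but it routes around the content in a way worth flagging. The paper does not give a proof of Lemma~\ref{Mollifier}; it cites Taylor and Roe, where the Friedrichs mollifier is the classical local (convolution-style) smoothing operator, and the point of condition~(1) is that even though such a $J_\ep$ does \emph{not} commute with $D_L^E$, the commutator stays order zero and uniformly bounded. Your choice $J_\ep = \phi(\ep D_L^E)$ instead makes $[J_\ep, D_L^E]=0$ by construction, so that half of condition~(1) holds trivially rather than being a genuine estimate. Since the lemma is purely an existence statement and the downstream uses (Proposition~\ref{MolProp}, Corollary~\ref{Mollifying}, and the justification of Remark~\ref{nonsquare}) only need the bounds stated, a vanishing commutator is perfectly acceptable, and in fact simplifies Proposition~\ref{MolProp} as well. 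The only nontrivial input in your version is Theorem~\ref{SCF&D1}, applied to $z\mapsto\phi(\ep z)$, whose Fourier transform $y\mapsto\tfrac1\ep\rho(y/\ep)$ remains compactly supported for each $\ep$; this correctly gives $k_{J_\ep}\in\Gamma_s(F,E)$ with propagation shrinking to zero. The uniform Sobolev bound follows as you say from the Spectral Mapping Theorem since $J_\ep$ commutes with $(1+(D_L^E)^2)^{k/2}$, and $\phi(\ep z)\to\phi(0)=1$ pointwise and boundedly gives strong (hence weak) convergence. So the proof is valid; it trades the classical Friedrichs commutator estimate for the observation that a function of $D_L^E$ commutes with $D_L^E$, which the paper's intended references do not exploit.
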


\begin{proposition}\label{MolProp}
For any $s\in \R$, the commutator $[J_\ep, (D_L^E)^\ell]$  is  uniformly bounded as an operator from any leafwise $s$ Sobolev space to the leafwise $s-\ell+1$ Sobolev space, with a bound which is independent of $\ep$. 

For any leafwise smoothing  operator $A$ with kernel in $\Gamma_s(F, E)$, and any $\ell \in \N$, the family $(AJ_\ep (D_L^E)^{\ell})_{\ep \in (0,1)}$ has uniformly bounded leafwise $L^2$-operator norm, with the bound being uniform  in $\ep$. 
\end{proposition}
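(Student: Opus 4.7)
The plan is to prove the two statements in order, using one underlying algebraic identity in both cases: the telescoping
$$
[J_\ep, (D_L^E)^\ell] \;=\; \sum_{k=0}^{\ell-1} (D_L^E)^k \, [J_\ep, D_L^E] \, (D_L^E)^{\ell-1-k}.
$$
For the first assertion I would simply track Sobolev indices through each summand: $(D_L^E)^{\ell-1-k}$ maps the leafwise Sobolev space $H^s$ into $H^{s-(\ell-1-k)}$, the middle factor $[J_\ep,D_L^E]$ is $\ep$-uniformly bounded on $H^{s-(\ell-1-k)}$ by Lemma \ref{Mollifier}(1), and $(D_L^E)^k$ then maps $H^{s-(\ell-1-k)}$ into $H^{s-\ell+1}$. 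Only the middle factor depends on $\ep$, and its norm bound is uniform, so the composition (and the finite sum) is bounded from $H^s$ to $H^{s-\ell+1}$ by a constant independent of $\ep$.

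For the second assertion I would split
$$
AJ_\ep (D_L^E)^\ell \;=\; A (D_L^E)^\ell J_\ep \;+\; A\,[J_\ep, (D_L^E)^\ell],
$$
and handle the two terms separately. For the first term, the key observation is that right multiplication of a kernel in $\Gamma_s(F,E)$ by the differential operator $(D_L^E)^\ell$ keeps the kernel in $\Gamma_s(F,E)$: explicitly, the kernel of $A(D_L^E)^\ell$ is obtained by applying the formal adjoint $((D_L^E)^\ell)^{*}$ in the second variable of $k_A$, and the bounded geometry of $(M,F,E)$ together with the uniform boundedness of the coefficients of $D_L^E$ yields the required uniform leafwise derivative bounds while preserving the finite-propagation restriction coming from the $\gamma_{ij\ell}$. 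Hence $A(D_L^E)^\ell$ is leafwise $L^2$-bounded by the properties of $\Gamma_s(F,E)$ recalled in Section \ref{results}, and composing with $J_\ep$ — which is $\ep$-uniformly $L^2$-bounded by Lemma \ref{Mollifier}(1) — gives a uniform $L^2$ bound on the first term. For the second term, I expand the commutator as in the telescoping identity and write
$$
A\,[J_\ep, (D_L^E)^\ell] \;=\; \sum_{k=0}^{\ell-1} A(D_L^E)^k \, [J_\ep, D_L^E] \, (D_L^E)^{\ell-1-k}.
$$
Each $A(D_L^E)^k$ is again in $\Gamma_s(F,E)$, hence leafwise smoothing and bounded $H^{-(\ell-1-k)} \to L^2$; the right factor $(D_L^E)^{\ell-1-k}$ is bounded $L^2 \to H^{-(\ell-1-k)}$; and $[J_\ep, D_L^E]$ is $\ep$-uniformly bounded on $H^{-(\ell-1-k)}$ by part (1) of Lemma \ref{Mollifier}. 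Chaining these three gives a uniform $L^2 \to L^2$ bound on each summand.

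The main delicate point — and the one step I expect to need the most careful verification — is the closure of $\Gamma_s(F,E)$ under right multiplication by powers of $D_L^E$, since this is what lets me trade smoothing against the $\ep$-uniform mapping properties of the mollifier. Once this stability is established from the bounded geometry assumptions (uniform bounds on the coefficients of $D_L^E$ and their covariant derivatives, combined with the uniform control of the $\gamma_{ij\ell}$ in the definition of $\Gamma_s(F,E)$), the rest of the argument is bookkeeping on Sobolev indices, and the uniformity in $\ep$ is inherited throughout from the single input of Lemma \ref{Mollifier}(1).
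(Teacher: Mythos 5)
Your argument follows essentially the same route as the paper: the identical telescoping identity for $[J_\ep,(D_L^E)^\ell]$ proves the first assertion, and the same splitting $AJ_\ep (D_L^E)^\ell = A(D_L^E)^\ell J_\ep + A[J_\ep,(D_L^E)^\ell]$ together with closure of $\Gamma_s(F,E)$ under right composition with $D_L^E$ handles the second. The only cosmetic difference is that you re-expand the telescoping sum inside $A[J_\ep,(D_L^E)^\ell]$ rather than directly invoking the first assertion (which already gives a uniform $L^2 \to H^{-\ell+1}$ bound on the commutator, and then $A: H^{-\ell+1}\to L^2$ finishes); both are fine, and your version just spells out what the paper leaves as "now clear."
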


\begin{proof}
Note that
$$
[J_\ep, (D_L^E)^\ell] = \sum_{j=0}^{\ell-1} (D_L^E)^j [J_\ep, D_L^E] (D_L^E)^{\ell-j-1},
$$
which proves the first statement.  For the second, we have
$$
AJ_\ep (D_L^E)^{\ell} = (A(D_L^E)^{\ell}) J_\ep + A [J_\ep, (D_L^E)^\ell],
$$
which is a leafwise smoothing operator, since $A$ and $A(D_L^E)^{\ell}$ are. That the leafwise $L^2$-operator norm of $AJ_\ep (D_L^E)^{\ell}$ is uniformly bounded independently of $\ep$ is now clear.
\end{proof}

\begin{corollary}\label{Mollifying}
For any smoothing operator $A$ with kernel in $\Gamma_s(F, E)$ and  any finite propagation leafwise $0$-th order operator $B$, the Schwartz kernel of $AJ_\ep B$  converges uniformly to the Schwartz kernel of $AB$ when $\ep\to 0$.   The same holds for the Schwartz kernels of $J_\ep BA$ and  $J_\ep A$.
\end{corollary}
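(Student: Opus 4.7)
The strategy is to reduce each of the three claims to a uniform estimate on the action of $J_\ep - \Id$ on a family of smooth test sections with uniformly bounded Sobolev norms, exploiting the strong smoothing properties of $A$ together with the uniform Sobolev and propagation bounds supplied by Lemma \ref{Mollifier} and Proposition \ref{MolProp}. For the first claim, I would pair the kernel difference with Dirac delta sections: for $x, y$ on a common leaf and unit vectors $v \in E_x$, $w \in E_y$,
\[
\langle k_{AJ_\ep B - AB}(x, y)\, w,\, v\rangle
\;=\; \langle (J_\ep - \Id) B\delta_y^w,\; A^* \delta_x^v\rangle
\;=\; \langle B\delta_y^w,\; (J_\ep^* - \Id)(A^* \delta_x^v)\rangle.
\]
Since $A$ has kernel in $\Gamma_s(F, E)$, so does $A^*$, so $\psi_{x,v} := A^* \delta_x^v$ is a leafwise smooth section, supported in a ball of uniformly bounded radius about $x$ (by the uniform finite propagation of $A$), with every Sobolev norm bounded uniformly in $(x, v)$ from the bounded-geometry estimates on $k_A$ and its derivatives. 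Since $B$ is a $0$-th order, finite propagation operator, $\|B\delta_y^w\|_{H^{-\ell}}$ is uniformly bounded in $(y, w)$ for $\ell$ sufficiently large. Via the duality bound $|\langle \phi, \psi\rangle| \le \|\phi\|_{H^{-\ell}}\|\psi\|_{H^\ell}$, uniform kernel convergence reduces to
\[
\sup_{(x, v)}\, \|(J_\ep^* - \Id)\, \psi_{x, v}\|_{H^\ell} \;\longrightarrow\; 0 \quad \text{as } \ep \to 0.
\]

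The main obstacle is upgrading the weak $L^2$-convergence $J_\ep \to \Id$ of Lemma \ref{Mollifier}, together with uniform Sobolev boundedness, to strong $H^\ell$-convergence that is moreover uniform over the family $\{\psi_{x,v}\}$. I would overcome this using three ingredients: (i) each $\psi_{x,v}$ is compactly supported in a ball of uniformly bounded radius, localizing the question to a foliation chart of bounded geometry; (ii) the family $\{\psi_{x,v}\}$ is uniformly bounded in $H^{\ell+1}$; and (iii) the standard bounded-geometry construction of $J_\ep$ as local convolution against an approximate identity (as in Shubin's and Roe's treatments) yields a quantitative estimate $\|(J_\ep - \Id)\psi\|_{H^\ell} \le C \ep\, \|\psi\|_{H^{\ell+1}}$ for smooth $\psi$, with $C$ depending only on bounded-geometry constants. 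Combined, these give the required uniform convergence, and hence the first statement.

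The remaining two claims follow by variants of this argument. For $J_\ep A \to A$, the analogous pairing
\[
\langle k_{(J_\ep - \Id)A}(x, y)\, w,\, v\rangle
\;=\; \langle v,\; ((J_\ep - \Id)(A\delta_y^w))(x)\rangle
\]
reduces matters to applying the mollifier estimate pointwise to the family $\{A\delta_y^w\}$, which is uniformly smooth and uniformly bounded in every Sobolev space. The case $J_\ep BA \to BA$ reduces to the previous one after noting that $BA$ has kernel in $\Gamma_s(F, E)$, since $B$ is $0$-th order with finite propagation and $k_A \in \Gamma_s(F, E)$, so the composition is again a smoothing operator with uniformly bounded, finite-propagation Schwartz kernel.
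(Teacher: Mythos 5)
Your proof takes a genuinely different route from the paper's.  The paper inserts factors of $(\Id + (D_L^E)^2)^{\pm\ell}$ around $AJ_\ep B$, uses Proposition \ref{MolProp} to get uniform $L^2$-boundedness of the family $(\Id + (D_L^E)^2)^{\ell}AJ_\ep(\Id + (D_L^E)^2)^{\ell}$, deduces its weak convergence as $\ep\to 0$ by a $3\ep$ argument, and then pairs against the uniformly $L^2$-bounded vectors $(\Id + (D_L^E)^2)^{-\ell}B\delta_y^w$ and $(\Id + (D_L^E)^2)^{-\ell}\delta_x^v$.  You instead transport $A$ through its adjoint onto the delta section and reduce to a uniform Sobolev estimate on $(J_\ep^* - \Id)$ applied to the family $\psi_{x,v} = A^*\delta_x^v$; Proposition \ref{MolProp}, the intermediate result the paper built for exactly this purpose, is entirely unused in your argument, and the commutator bounds of Lemma \ref{Mollifier}(1) for $[J_\ep, D_L^E]$ never enter.

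There is, however, a genuine gap at your ingredient (iii).  You invoke the quantitative rate estimate $\|(J_\ep - \Id)\psi\|_{H^\ell} \leq C\,\ep\,\|\psi\|_{H^{\ell+1}}$, justified by appeal to ``the standard bounded-geometry construction of $J_\ep$ as local convolution.''  But this corollary is meant to follow from Lemma \ref{Mollifier} and Proposition \ref{MolProp} as stated, and Lemma \ref{Mollifier} supplies only (a) uniform Sobolev boundedness of $J_\ep$ and $[J_\ep, D_L^E]$, with bounds independent of $\ep$, and (b) weak $L^2$-convergence $J_\ep \to \Id$ on each leaf.  These abstract properties do not imply any quantitative rate, nor even strong convergence in higher Sobolev norms, and certainly not convergence that is uniform over the family $\{\psi_{x,v}\}$ as $x$ ranges over the noncompact manifold $M$ --- that family is uniformly $H^{\ell+1}$-bounded and compactly supported but is not precompact in any global $H^\ell$, since the supports escape to infinity, so a pointwise-strong-convergence-plus-equicontinuity argument on a fixed compact does not close the loop either.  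Your estimate is indeed true for the usual Friedrichs-type mollifiers of Taylor and Roe, and the argument would go through if that estimate were adjoined to Lemma \ref{Mollifier}; but as written you are strengthening the hypotheses on $J_\ep$ beyond what the paper records, and this is precisely the step that Proposition \ref{MolProp} was designed to circumvent.  The remaining reductions (passing $A^*$ onto the delta, the $H^{-\ell}$ bound on $B\delta_y^w$, the observation that $BA$ again has kernel in $\Gamma_s(F,E)$) are sound.
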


This  immediately implies that 
$$
\lim_{\ep \to 0}  \htr(k_{AJ_\ep B} (x,x)) \; = \;  \htr(k_{AB} (x,x)),  \; \text{ and }  \; \lim_{\ep \to 0}  \htr(k_{J_\ep BA} (x,x)) \; = \;  \htr(k_{BA} (x,x)), 
$$
in $\cA_c^0(M/F)$, the space of Haefliger functions.

\begin{proof}
As in the proof of Proposition \ref{ptwseconv}, we have the following for any $\ell \geq 0$.
\begin{eqnarray*}
\langle k_{AJ_\ep B} (x, y)(w), v\rangle & = & \langle AJ_\ep B (\delta_y^w),  \delta_x^v \rangle \\
&=& \langle (\Id + (D_L^E)^2)^{\ell} AJ_\ep (\Id + (D_L^E)^2)^\ell (\Id +(D_L^E)^2)^{-\ell} B (\delta_y^w), (\Id + (D_L^E)^2)^{-\ell} (\delta_x^v) \rangle 
\end{eqnarray*}
The operator $ (\Id + (D_L^E)^2)^{\ell} A$ satisfies the assumptions for $A$ in Proposition \ref{MolProp}, so the family  
$$
(\Id + (D_L^E)^2)^{\ell} AJ_\ep (\Id + (D_L^E)^2)^\ell
$$
has uniform bounded leafwise $L^2$-operator norm which is uniform in $\ep$. This implies in turn by a $3\ep$ argument with the Schwartz inequality that $(\Id + (D_L^E)^2)^{\ell} AJ_\ep (\Id + (D_L^E)^2)^\ell$ converges weakly to $(\Id + (D_L^E)^2)^{\ell} A (\Id + (D_L^E)^2)^\ell$. On the other hand, bounded geometry implies that the delta sections live in some Sobolev space, so also  does the section $B (\delta_y^w)$, since $B$ is bounded on every Sobolev space. Hence,  there exists $\ell \geq 0$ such that $(\Id +(D_L^E)^2)^{-\ell} B (\delta_y^w)$ and $(\Id + (D_L^E)^2)^{-\ell}  (\delta_x^v)$ both belong to the Hilbert space of leafwise $L^2$ sections.  In addition, their $L^2$ norms are globally bounded. Therefore, $\langle k_{AJ_\ep B} (x, y)(w), v\rangle$ converges to $\langle k_{AB} (x, y)(w), v\rangle$ and this convergence is uniform over $M$.  The same sort argument works for $J_\ep A$  and  $J_\ep BA$.
\end{proof}

We can now justify Remark 4.4.

\medskip

By Definition 4.2 and the remark right after that definition, the $s$-limit only depends on the Haefliger functions, as far as the pair of functions is a compatible pair.   Now, the pairs composed of the integrals over the leaves of $(S-S^2, S'-S'^2)$ and $(R-R^2, R'-R'^2)$ respectively, are compatibles pairs of Haefliger functions. Thus we only need to prove that the integral over the leaves of the traces of the Schwartz kernels of $S-S^2$ and $R-R^2$ agree in $\maA_c (M/F)$, since that will also hold for $S'-S'^2$ and $R'-R'^2$ in $\maA_c (M'/F')$.

To simplify the notation, we will write $D$ for $D_L^E$.   Then,  
$$
S-S^2 \; = \; (\Id -S)S \; = \; QDS,   \quad\;   R- R^2 = (\Id -R)R \; = \; DQR, \quad \text{and} \quad   SQ  \; = \;  QR,
$$
and we have, 
$$
k_{J_\ep QDS} \to k_{QDS} \;\text{  and  }  \;   k_{DSJ_\ep Q} \to k_{DSQ}   \text{ uniformly as $\ep \to 0$}.
$$
This follows from Corollary \ref{Mollifying} by setting  $A =  DS$ and $B = Q$, (recall that $Q$ has finite propagation). 
Therefore, as  Haefliger functions, we get 
$$
\htr (k_{S - S^2}) \; = \;  \htr (k_{QDS}) \; = \;  \lim_{\ep \to 0} \htr (k_{J_\ep QDS})  \; = \;  \lim_{\ep \to 0} \htr (k_{DSJ_\ep Q})\; = \; 
$$
$$
 \htr (k_{DSQ}) \; = \; \htr (k_{DQR}) \; = \; \htr (k_{R - R^2}),
$$
in $\cA_c^0(M/F)$.  The third equality follows from Theorem 3.1 as  $J_\ep Q$ and  $DS$ are  in $\Gamma_s(F, E)$.

\medskip

Finally, we prove Lemma \ref{techlem2}, and for that we need the following. 
\begin{lemma}\label{prlem3}
Define the measurable section $V_{\ep}$ of $TF$ by setting
$$
\lan V_{\ep}, W \ran (x)  \,\, = \,\, (\nabla_W)_{(1)}k_{P_{[0,\ep]}}  \lan x, x \ran,
$$
for  any smooth  section $W$ of $TF$. 
Then the following pointwise relation between  measurable leafwise smooth functions on $M$ holds,
$$
 (\nabla^*\nabla)_{(1)} k_{P_{[0,\ep]}}  \lan x, x \ran \,\, = \,\, \nabla_{(1)} \nabla_{(2)} k_{P_{[0,\ep]}} \lan x, x \ran - \div_F (V_{\ep})(x),
$$
where for any leafwise vector field $V$, $\div_F (V)$ is its leafwise divergence.
\end{lemma}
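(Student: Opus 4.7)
The plan is to reduce to the classical single-section Leibniz identity via a spectral expansion of the leafwise kernel.  Fix a leaf $L$.  Since $P_{[0,\ep]}$ restricted to $L$ is a self-adjoint leafwise smoothing projection on $L^{2}(L,E\vert_L)$, its image admits an orthonormal basis $\{\sigma_i\}_i$ of smooth sections, and the leafwise Schwartz kernel has the form
$$k_{P_{[0,\ep]}}(x,y)\vert_{L\times L}\,=\,\sum_i \sigma_i(x)\otimes \sigma_i(y).$$
The smoothing property of $P_{[0,\ep]}$, established exactly as in the proof of Lemma~\ref{SmoothingGreen} using the Spectral Mapping Theorem together with bounded geometry, ensures that this series converges in the $C^\infty$ topology on compact subsets of $L\times L$, so may be differentiated termwise.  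Writing out the diagonal pairings gives $k_{P_{[0,\ep]}}\lan x,x\ran=\sum_i \lan\sigma_i(x),\sigma_i(x)\ran$, $\nabla_{(1)}\nabla_{(2)}k_{P_{[0,\ep]}}\lan x,x\ran=\sum_i \lan\nabla \sigma_i(x),\nabla\sigma_i(x)\ran$, and $(\nabla^*\nabla)_{(1)}k_{P_{[0,\ep]}}\lan x,x\ran=\sum_i\lan \nabla^*\nabla \sigma_i,\sigma_i\ran(x)$, while the vector field $V_{\ep}$ decomposes as $V_{\ep}=\sum_i V_{\sigma_i}$, where $V_\sigma$ is the leafwise vector field defined by $\lan V_\sigma,W\ran=\lan\nabla_W\sigma,\sigma\ran$.

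Next I apply the classical pointwise identity term by term.  At a point $x\in L$, pick a local orthonormal frame $e_1,\dots,e_p$ of $TF$ which is normal at $x$, so that $\nabla_{e_j}e_k(x)=0$.  The compatibility of $\nabla$ with the Hermitian metric gives, at the point $x$,
$$e_j\lan\nabla_{e_j}\sigma_i,\sigma_i\ran \,=\,\lan\nabla_{e_j}\nabla_{e_j}\sigma_i,\sigma_i\ran + \lan\nabla_{e_j}\sigma_i,\nabla_{e_j}\sigma_i\ran.$$
Summing over $j$ and using that in a normal frame $\nabla^*\nabla=-\sum_j\nabla_{e_j}\nabla_{e_j}$ and $\div_F(V)(x)=\sum_j e_j\lan V,e_j\ran(x)$ yields the single-section identity
$$\lan\nabla^*\nabla \sigma_i,\sigma_i\ran(x)\,=\,|\nabla\sigma_i(x)|^2 -\div_F(V_{\sigma_i})(x).$$

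Finally, summing over $i$ and interchanging $\div_F$ with the summation—justified by the locally uniform convergence of $\sum_i V_{\sigma_i}$ together with its first derivatives, which again follows from the smoothing property—gives
$$(\nabla^*\nabla)_{(1)} k_{P_{[0,\ep]}}\lan x,x\ran\,=\,\nabla_{(1)}\nabla_{(2)} k_{P_{[0,\ep]}}\lan x,x\ran \,-\,\div_F(V_{\ep})(x),$$
as claimed.  The main technical obstacle is justifying that the spectral expansion converges in $C^\infty_{\rm loc}$ so that termwise differentiation and termwise divergence are legitimate; this is handled by the same Sobolev arguments used in Lemmas~\ref{SmoothingGreen} and \ref{MeasurableP}, controlling the $C^k$ norm of tails of $\sum_i \sigma_i\otimes\sigma_i$ on compact subsets via Sobolev embedding and the uniform boundedness of $P_{[0,\ep]}$ between leafwise Sobolev spaces of arbitrary order.
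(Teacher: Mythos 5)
Your proof is correct and follows essentially the same route as the paper: expand the diagonal kernel as $\sum_i \sigma_i \otimes \sigma_i$ for an orthonormal basis of $\Im(P_{[0,\ep]})$, establish the single-section identity $\langle\nabla^*\nabla\sigma_i,\sigma_i\rangle = |\nabla\sigma_i|^2 - \div_F(V_{\sigma_i})$, and sum term by term, justifying the interchange of limit and differentiation by $C^\infty_{\mathrm{loc}}$ convergence coming from the leafwise smoothing property of $P_{[0,\ep]}$. The only cosmetic difference is that you carry out the normal-frame computation explicitly where the paper cites \cite{LM} p.\ 155, and you appeal to Sobolev estimates rather than to \cite{GL3} for the locally uniform convergence; both are the same ingredients.
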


\begin{proof}
Consider  $k_{P_{[0,\ep]}}(x,y) |_{L \times L} =  \sum_i\sigma^L_i(x) \otimes \sigma^L_i(y)$, where  $ \sigma^L_1$, $ \sigma^L_2$ \ldots is a leafwise orthonormal basis of $\Im(P_{[0,\ep]})$.
By standard arguments, \cite{GL3}, proof of Theorem 4.18, the series $\sum_i \sigma^L_i \otimes \sigma^L_i$ converges locally $C^\infty$-uniformly to $k_{P_{[0,\ep]}}$ over $L \times L$.  Therefore, the series $\sum_i \nabla^*\nabla \sigma^L_i \otimes \sigma^L_i$ (resp.\ $\sum_i \nabla \sigma^L_i \otimes \nabla \sigma^L_i$) also converges locally uniformly to the kernel  $(\nabla^*\nabla)_{(1)} k_{P_{[0,\ep]}}$ (resp.\ $\nabla_{(1)} \nabla_{(2)} k_{P_{[0,\ep]}}$) over $L \times L$.  Both limits are independent of the choice of the orthonormal basis $\sigma^L_i$. As a consequence, the  series of smooth functions on $L$, $x\mapsto \sum_i \lan \nabla^*\nabla \sigma^L_i (x) , \sigma^L_i (x)\ran$ (resp.\ $x\mapsto \sum_i \lan \nabla \sigma^L_i , \nabla \sigma^L_i(x)\ran$)  converges locally uniformly on $L$ to the smooth function $x\mapsto \nabla^*\nabla_{(1)} k^L_{[0,\ep]} \lan x,x       \ran$ (resp.\ $x\mapsto \nabla_{(1)} \nabla_{(2)} k^L_{[0,\ep]} \lan x,y \ran$).  Using a classical local computation on $L$, see \cite{LM}, p.\ 155, we have that as smooth functions on $L$, 
\begin{Equation}\label{divergence}
$\hspace{3.0cm}  \lan \nabla^*\nabla \sigma^L_i (x) , \sigma^L_i (x) \ran \,\, = \,\, \lan \nabla \sigma^L_i  (x) , \nabla \sigma^L_i  (x)\ran - \div_F (V_{\lan \sigma^L_i, \sigma^L_i \ran}) (x),$
\end{Equation}
\noindent  where $V_{\lan \sigma^L_i, \sigma^L_i \ran}$ is the section of $TF$ satisfying 
$
\lan V_{\lan \sigma^L_i, \sigma^L_i \ran} , W \ran (x)  \,\, = \,\, (\nabla_W)_{(1)} \lan \sigma^L_i  (x), \sigma^L_i  (x)\rangle,
$
for  any smooth  section $W$ of $TF$. 

For any leafwise tangent vector field $W$, the series 
$\sum_i \langle \nabla_W\sigma^L_i , \nabla_W\sigma^L_i \rangle$ converges locally uniformly to the smooth function on $L$ given by $x\mapsto (\nabla_W)_{(2)}(\nabla_W)_{(1)} k_{P_{[0,\ep]}} \lan x, x \ran$. Therefore, summing  Equation \ref{divergence} over $i$, we get, for all $x \in M$,
$$
(\nabla^*\nabla)_{(1)} k_{P_{[0,\ep]}}  \lan x, x \ran \,\, = \,\, \nabla_{(1)} \nabla_{(2)} k_{P_{[0,\ep]}} \lan x, x \ran - \div_F (V_{\ep})(x).$$
Note that all the terms in this equality are transversally Borel and leafwise smooth. 
\end{proof}

\noindent {\em Proof of  Lemma \ref{techlem2}. }   To prove that for  $0 \leq \ep < \infty$, 
$$
\int_M (\nabla^* \nabla)_{(1)} k_{P_{[0,\ep]}} \lan x,x \ran \, d\mu   \,\, = \,\, \int_M  \nabla_{(1)}  \nabla_{(2)}k_{P_{[0,\ep]}} \lan x,x \ran  \, d\mu,
$$ 
we need only observe that for any leafwise vector field $V$, the top degree leafwise form $\div_F (V) dx_F$ coincides with the leafwise exact differential form $d_F (i_V dx_F)$ with $i_V$ contraction with $V$, and that it is a standard result in foliation theory that $\dd \int_F  d_F (i_V dx_F) = 0$ in $H_c^0(M/F)$.  Thus we have  
$$
\int_M (\nabla^*\nabla)_{(1)} k_{P_{[0,\ep]}}  \lan x, x \ran  \, d\mu \,\, = \,\, \int_M\nabla_{(1)} \nabla_{(2)} k_{P_{[0,\ep]}} \lan x, x \ran - \div_F (V_{\ep})(x)  \, d\mu   \,\, = \,\,
$$
$$
\int_M\nabla_{(1)} \nabla_{(2)} k_{P_{[0,\ep]}} \lan x, x \ran   \, d\mu  \,\, - \,\,  \int_T \Big( \int_F  d_F (i_V dx_F) \Big) \, d\Lam \,\, = \,\,
\int_M\nabla_{(1)} \nabla_{(2)} k_{P_{[0,\ep]}} \lan x, x \ran   \, d\mu,
$$
and so  Lemma \ref{techlem2}.  \hfill $\Box$

 \medskip

\end{document}